\providecommand{\U}[1]{\protect\rule{.1in}{.1in}}
\providecommand{\U}[1]{\protect \rule{.1in}{.1in}}
\providecommand{\U}[1]{\protect \rule{.1in}{.1in}}
\newtheorem{theorem}{Theorem}[section]
\newtheorem{lemma}{Lemma}[section]
\newtheorem{proposition}{Proposition}[section]
\newtheorem{corollary}{Corollary}[section]
\newtheorem{definition}{Definition}[section]
\numberwithin{equation}{section}
\newtheorem {conjecture}{Conjecture}
\theoremstyle{remark}
\newtheorem{remark}{Remark}[section]
\numberwithin{equation}{section}
\begin{document}
\title[On the CR analogue of Frankel conjecture ]{On the CR analogue of Frankel conjecture and a smooth representative of the
first Kohn-Rossi cohomology group }
\author{$^{^{\dag}}$Der-Chen Chang}
\address{$^{^{\dag}}$Department of Mathematics and Statistics, Georgetown University,
Washington D. C. 20057, USA\\
 \\
Graduate Institute of Business Administration, College of Management, Fu Jen
Catholic University, Taipei 242, Taiwan, R.O.C.}
\email{chang@georgetown.edu}
\author{$^{\ast}$Shu-Cheng Chang}
\address{$^{\ast}$Department of Mathematics and Taida Institute for Mathematical
Sciences (TIMS), National Taiwan University, Taipei 10617, Taiwan, R.O.C.}
\email{scchang@math.ntu.edu.tw }
\author{$^{\ast\ast}$Ting-Jung Kuo}
\address{$^{\ast\ast}$Department of Mathematics, National Taiwan Normal University,
Taipei 11677, Taiwan }
\email{tjkuo1215@ntnu.edu.tw}
\author{$^{^{\dag\dag}}$Chien Lin}
\address{$^{^{\dag\dag}}$Yau Mathematical Sciences Center, Tsinghua University, Haidian
District, Beijing 100084, China}
\email{chienlin@mail.tsinghua.edu.cn}
\thanks{$^{^{^{\dag}}}$Der-Chen Chang is partially supported by an NSF grant
DMS-1408839 and a McDevitt Endowment Fund at Georgetown University. }
\thanks{$^{\ast}$Shu-Cheng Chang and $^{\ast\ast}$Ting-Jung Kuo are partially
supported in part by the MOST of Taiwan.}
\subjclass{Primary 32V05, 32V20; Secondary 53C56}
\keywords{Pseudo-Einstein, CR-pluriharmonic operator, CR Paneitz operator, CR
Q-curvature, CR Frankel conjecture, Spherical structure, Riemann mapping
theorem. Lee conjecture, Kohn-Rossi cohomology group.}

\begin{abstract}
In this note, we first give a criterion of pseudo-Einstein contact forms and
then affirm the CR analogue of Frankel conjecture in a closed, spherical,
strictly pseudoconvex CR manifold of nonnegative pseudohermitian curvature on
the space of smooth representatives of the first Kohn-Rossi cohomology group.
Moreover, we obtain the CR Frankel conjecture in a closed, spherical, strictly
pseudoconvex CR manifold with the vanishing first Kohn-Rossi cohomology group.
In particular, this conjecture holds in a spherical boundary of the Stein manifold.

\end{abstract}
\maketitle

\section{Introduction}

The well-known Riemann mapping theorem states that every simply connected
domain $\Omega$ properly contained in $\mathbb{C}$ is biholomorphically
equivalent to the open unit disc. In the paper of \cite{cj}, Chern and Ji
proved a generalization of the Riemann mapping theorem.

\begin{proposition}
\label{Pa} If $\Omega$ is a bounded, simply connected, strictly convex domain
in $\mathbb{C}^{n+1}$ and its connected smooth boundary $\partial\Omega$ has a
spherical CR structure, then it is biholomorphic to the unit ball and
$M=\partial\Omega$ is the standard CR $(2n+1)$-sphere.
\end{proposition}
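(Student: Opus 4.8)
The plan is to produce from the spherical structure on $M=\partial\Omega$ a developing map to $S^{2n+1}$, to show that it is automatically a CR diffeomorphism, and then to extend it holomorphically to a biholomorphism of $\Omega$ onto the unit ball $B^{n+1}\subset\mathbb{C}^{n+1}$. One may assume $n\geq 1$, since for $n=0$ the statement is the classical Riemann mapping theorem. Because $\Omega$ is bounded and strictly convex, $\overline{\Omega}$ is diffeomorphic to a closed ball and $M$ to $S^{2n+1}$ (radially project from an interior point); in particular $M$ is simply connected, and for $n\geq 1$ this is the only role played by the simple-connectivity hypothesis. Recall that ``$M$ carries a spherical CR structure'' means that its CR structure is locally CR-equivalent to that of $S^{2n+1}$, i.e.\ the Chern--Moser curvature tensor (the Cartan tensor, when $n=1$) vanishes identically.

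First I would construct the developing map. Flatness of the Chern--Moser--Cartan connection provides an atlas of CR charts $\varphi_{\alpha}\colon U_{\alpha}\to S^{2n+1}$ whose transition maps lie in the CR automorphism group $\mathrm{PU}(n+1,1)$ of the sphere; since $\pi_{1}(M)=1$ the associated holonomy is trivial, so the charts patch to a globally defined CR \emph{local} diffeomorphism $\Phi\colon M\to S^{2n+1}$. Now comes the soft key step: a continuous local diffeomorphism out of a compact space is proper, hence a covering map onto the connected manifold $S^{2n+1}$; and $S^{2n+1}$ is simply connected ($n\geq 1$), so every covering of it is trivial. Therefore $\Phi$ is a diffeomorphism, hence a CR diffeomorphism, and already $M$ is the standard CR $(2n+1)$-sphere.

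It remains to identify $\Omega$ biholomorphically with $B^{n+1}$. Regard $\Phi$ as a map $M\to S^{2n+1}\subset\mathbb{C}^{n+1}$; its $n+1$ components are smooth CR functions on $M$. As $\Omega$ is a bounded domain with connected, smooth, strictly pseudoconvex boundary and $n+1\geq 2$, the Bochner--Hartogs (or Kohn--Rossi) extension theorem yields a holomorphic map $F\colon\Omega\to\mathbb{C}^{n+1}$, continuous on $\overline{\Omega}$, with $F|_{M}=\Phi$. The function $|F|^{2}$ is plurisubharmonic on $\Omega$, continuous on $\overline{\Omega}$, and identically $1$ on $M$ (since $\Phi$ maps into the unit sphere); the maximum principle gives $|F|^{2}\le 1$ on $\overline{\Omega}$, and if equality occurred at an interior point then $|F|^{2}\equiv 1$, forcing $\partial\bar{\partial}|F|^{2}=0$ and hence $F$ constant --- impossible since $F|_{M}=\Phi$. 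Thus $F(\Omega)\subseteq B^{n+1}$ while $F(\overline{\Omega}\setminus\Omega)=\Phi(M)=S^{2n+1}$, so $F\colon\Omega\to B^{n+1}$ is proper. Since $\Phi$ is a diffeomorphism onto $S^{2n+1}$, applying the Hopf lemma to $(|w|^{2}-1)\circ F$ shows $dF$ is nonsingular along $M$, so $F$ is a local biholomorphism near $M$ and, being injective on $M$, injective on a one-sided collar of $M$; as $F^{-1}(y)$ lies in that collar once $y$ is close enough to $\partial B^{n+1}$, the proper branched covering $F$ has degree one, i.e.\ $F$ is a biholomorphism of $\Omega$ onto $B^{n+1}$. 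Restricting to the boundary recovers the CR diffeomorphism $\Phi\colon M\to S^{2n+1}$.

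The step I expect to be the main obstacle is the passage from local to global: from the CR charts and the local holomorphic extensions at the strictly pseudoconvex boundary to the single developing map $\Phi$ and the single holomorphic map $F$, and then the verification that $F$ maps into the ball, is proper, and has degree one. Strict convexity is what pins down the topology --- that $\overline{\Omega}$ is a ball and $M$ a sphere --- which is exactly what guarantees $\Phi$ is globally defined and forces it to be a diffeomorphism; with that secured, the remaining ingredients (Chern--Moser flatness, Bochner extension, the maximum principle, and the Hopf lemma) are standard, and one has only to assemble them.
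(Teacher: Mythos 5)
The paper does not actually prove Proposition \ref{Pa}: it is quoted from Chern--Ji \cite{cj}, and your argument reconstructs essentially the same standard route as that source --- strict convexity makes $M$ a simply connected sphere, sphericity plus simple connectivity yields a global developing map that is forced (compactness, covering argument) to be a CR diffeomorphism onto $S^{2n+1}$, and the Bochner--Hartogs extension combined with the maximum principle, the Hopf lemma, and the degree-one count identifies $\Omega$ with the unit ball. The only point to state explicitly is that the Bochner extension $F$ of the smooth CR map $\Phi$ is smooth (not merely continuous) up to $\partial\Omega$ --- which is standard for smooth CR data on a smooth strictly pseudoconvex boundary --- since your Hopf-lemma step needs $dF$ along $M$.
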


It is also known from Burns and Shnider (\cite[Proposition 1.5.]{bs}) that if
$M$ is the compact \textbf{spherical }boundary of a Stein manifold, then
either $M$ is the standard CR sphere or $\pi_{1}(M)$ is infinite.

In Kaehler geometry, it was conjectured by Frankel (\cite{f}) that a closed
Kaehler manifold with positive bisectional curvature is biholomorphic to the
complex projective space. The Frankel conjecture was proved in later 1970s
independently by Mori (\cite{m}) and Siu-Yau (\cite{sy}). However Sasakian
geometry (that is, its pseudohermitian torsion tensor vanishes) is an odd
dimensional counterpart of Kaehler geometry, then it is natural to ask for
\textbf{CR analogue of Frankel conjecture} for Sasakian manifolds. In fact,
this is proved by He and Sun (\cite{hs}) :

\begin{proposition}
\label{Pb} The universal covering of any closed Sasakian $(2n+1)$-manifold of
positive pseudohermitian bisectional curvature must be CR equivalent to the
standard CR sphere $(\mathbf{S}^{2n+1},\widehat{J},\widehat{\theta})$.
\end{proposition}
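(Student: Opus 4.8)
The plan is to imitate, in the transverse (foliated) category, the Ricci-flow proof of the generalized Frankel conjecture in Kaehler geometry, using the \emph{Sasaki--Ricci flow}. First I would observe that positive transverse pseudohermitian bisectional curvature in particular gives positive transverse Ricci curvature, so a transverse Bonnet--Myers argument forces $\pi_{1}(M)$ to be finite; hence it suffices to treat the simply connected case, and we may pass to the universal cover, a compact simply connected Sasakian manifold with positive transverse bisectional curvature. Since the basic first Chern class is then positive, the normalized Sasaki--Ricci flow, which fixes the Reeb field $\xi$ and the transverse holomorphic structure and evolves the transverse Kaehler form by $\partial_{t}\omega^{T}=-\mathrm{Ric}^{T}+\omega^{T}$, exists for all $t\ge 0$.

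The key step is the preservation of curvature positivity: positive transverse holomorphic bisectional curvature is preserved under the Sasaki--Ricci flow. This is the transverse analogue of the Bando--Mok maximum principle for the curvature operator on $(1,1)$-forms; it is obtained by writing the evolution equation for the transverse curvature tensor, restricting to basic tensors, and invoking Hamilton's tensor maximum principle, the reaction term being seen to preserve the cone of nonnegative transverse bisectional curvature operators and, by the strong maximum principle, its interior. In tandem I would derive a transverse Li--Yau--Hamilton differential Harnack inequality for the flow, following Cao's Harnack estimate for the Kaehler--Ricci flow, again carried out for basic quantities.

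One then upgrades this to convergence. Since positive transverse bisectional curvature excludes nontrivial transverse holomorphic vector fields and nontrivial transverse Kaehler--Ricci solitons, a rescaled-limit analysis (an eternal solution with positive curvature would be a steady soliton, which is excluded) together with the transverse Harnack inequality shows the normalized flow converges exponentially; moreover the pinching of the transverse bisectional curvature improves along the way, so the limit Sasakian structure has \emph{constant transverse holomorphic sectional curvature}. Finally, invoking the rigidity classification of compact, simply connected Sasakian manifolds of constant positive transverse holomorphic sectional curvature --- these are, after a $D$-homothetic transformation, CR equivalent to the standard CR sphere $(\mathbf{S}^{2n+1},\widehat{J},\widehat{\theta})$, in the spirit of Tanno and Webster --- and noting that the transverse holomorphic structure, hence the biholomorphism type of the transverse geometry, is unchanged along the flow, we conclude that the universal cover of $M$ is CR equivalent to $(\mathbf{S}^{2n+1},\widehat{J},\widehat{\theta})$.

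The main obstacle is the convergence step, just as in the Kaehler setting, where convergence of the Kaehler--Ricci flow under positive bisectional curvature rests on Perelman-type non-collapsing and scalar-curvature estimates; transplanting these to the transverse setting requires working systematically with basic forms and transverse Sobolev and $C^{0}$ estimates, and coping with the case of an irregular Reeb foliation, where no honest Kaehler orbifold quotient is available. An alternative is to first deform $\xi$ within the Sasaki cone to a quasi-regular Reeb field while preserving positive transverse bisectional curvature, reducing to the Kaehler orbifold leaf space and then applying the orbifold version of the Frankel conjecture; but this merely exchanges the analytic difficulty for the problems of controlling positivity along the deformation and of showing the leaf orbifold is in fact smooth, so I would keep it in reserve.
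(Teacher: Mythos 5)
The paper offers no proof of Proposition \ref{Pb}; it is quoted verbatim from He and Sun \cite{hs}, and their argument is indeed a Sasaki--Ricci flow argument, so your overall strategy is the right one in spirit: finiteness of $\pi_{1}$ by a transverse Bonnet--Myers argument, long-time existence of the normalized flow since the basic first Chern class is positive, and preservation of positive transverse bisectional curvature by a transverse Bando--Mok maximum principle are all sound ingredients that appear in the cited proof.

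The genuine gap is exactly the step you flag as the main obstacle, and the mechanism you propose for closing it rests on a false premise. You assert that positive transverse bisectional curvature excludes nontrivial transverse holomorphic vector fields and transverse Kähler--Ricci solitons; this is false --- the model itself, the standard (and weighted) Sasaki structures on $\mathbf{S}^{2n+1}$, is transversely Fubini--Study and carries a large family of transverse holomorphic automorphisms, just as $\mathbb{CP}^{n}$ has positive bisectional curvature together with a noncompact automorphism group. Consequently the rescaling/no-soliton argument you invoke cannot yield convergence; in the positive case the flow can at best converge modulo automorphisms, and the natural limits are transverse Kähler--Ricci solitons (weighted Sasaki spheres), not a priori structures of constant transverse holomorphic sectional curvature, so there is also no elementary ``pinching improves along the flow'' route to constant curvature. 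Closing this requires Perelman-type (non-collapsing and scalar curvature) estimates adapted to the transverse setting, convergence modulo the automorphism group to a soliton, and a separate classification of compact Sasaki structures with nonnegative transverse bisectional curvature whose underlying CR structure is then identified with the standard spherical one (the weighted Sasaki spheres all share the standard CR structure, differing only by the choice of Reeb field in the Sasaki cone) --- only after that does a Tanno/Webster-type rigidity statement apply. As written, your text is a program whose decisive analytic and classification steps are deferred, and the specific shortcut proposed for them would fail.
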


Note that in view of Proposition \ref{Pb}, it involves the existence problem
of transversely Kaehler-Einstein metrics (pseudo-Einstein contact structures)
with positive pseudohermtian bisectional curvature and
\textbf{Sasakian-Eisntein} metrics in\ a closed Sasakian manifold.

From this inspiration, first by studying the existence theorem of
pseudo-Einstein contact structures in\ a closed, strictly pseudoconvex CR
$(2n+1)$-manifold of vanishing first Chern class for $n\geq2$ \ as in Theorem
\ref{T2} and Theorem \ref{T2a}, we are able to prove that such a manifold is
Sasakian when it is spherical with nonnegative pseudohermitian curvature on
the space of smooth representatives of the first Kohn-Rossi cohomology group.
Then we affirm the CR Frankel conjecture as in Theorem \ref{T13}, Corollary
\ref{C13}, Corollary \ref{C15}, and Theorem \ref{T14}. In particular, we
obtain the CR Frankel conjecture in a strictly pseudoconvex CR manifold which
is a spherical boundary of a Stein manifold.

More precisely, we first derive the key CR Bochner formulae as in Theorem
\ref{t31} which are involved the CR \ Paneitz operator. This is one of main
differences from Lee's key formula (\cite{l}) as in (\ref{2018AA}). By using
these formulae, we are able to obtain a pseudo-Eisntein contact form. Finally,
we prove that any closed, spherical, strictly pseudoconvex CR $(2n+1)$%
-manifold $(M,J,\theta)$ of pseudo-Eisntein contact form $\theta$ with the
positive constant Tanaka-Webster scalar curvature $R$ must be \textbf{Sasakian
space form} and manifolds always admit Riemannian metrics with positive Ricci
curvature (\cite{cc}), so they must have finite fundamental group and the
manifolds is a finite quotent of a standard CR sphere (\cite{t}). Therefore
the universal covering of $M$ is globally CR equivalent to a standard CR sphere.

A strictly pseudoconvex CR $(2n+1)$-manifold is called
\textit{pseudo-Einstein} if its pseudohermitian Ricci curvature tensor is
function-proportional to its Levi metric%
\begin{equation}
R_{\alpha\overline{\beta}}=\frac{R}{n}h_{\alpha\overline{\beta}} \label{2019}%
\end{equation}
for $n\geq2$. It is equivalent to saying the following quantity is vanishing
(\cite{l}, \cite{h}, \cite{ckl})
\begin{equation}
W_{\alpha}\doteqdot\left(  R,_{\alpha}-inA_{\alpha\beta},^{\beta}\right)
=0\text{.} \label{2019x}%
\end{equation}
Then the pseudo-Einstein condition (\ref{2019}) can be replaced by
(\ref{2019x}) for any $n\geq1.$ This is the main different point view from the
previous work by J. Lee (\cite{l}). Here we come out with several key
Bochner-type formulae \ as in Theorem \ref{t31}.

From this, we define (\cite{h}, \cite{fh}, \cite{ccc}) the CR analogue of
$Q$-curvature by%

\begin{equation}
Q:=-\operatorname{Re}[\left(  R,_{\alpha}-inA_{\alpha\beta,\overline{\beta}%
}\right)  _{\overline{\alpha}}]=-\frac{1}{2}(W_{\alpha\overline{\alpha}%
}+W_{\overline{\alpha}\alpha}). \label{c}%
\end{equation}
Lee (\cite{l}) showed an obstruction to the existence of a pseudo-Einstein
contact form $\theta$ which is the vanishing of first Chern class
$c_{1}(T_{1,0}M)$ for a closed, strictly pseudoconvex $(2n+1)$-manifold
$\left(  M,J,\theta\right)  $ with $n\geq2$. Thereafter, Lee conjectured that

\begin{conjecture}
\label{conj1} Any closed, strictly pseudoconvex CR $(2n+1)$-manifold of the
vanishing first Chern class $c_{1}(T_{1,0}M)$ admits a global pseudo-Einstein
structure for $n\geq2$.
\end{conjecture}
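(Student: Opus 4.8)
\smallskip

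\noindent Towards Conjecture \ref{conj1} the natural approach is the following. Fix any contact form $\theta_{0}$ compatible with $J$ and look for a pseudo-Einstein representative $\widehat{\theta}=e^{2f}\theta_{0}$, $f\in C^{\infty}(M,\mathbb{R})$; by Lee's criterion (\ref{2019x}) this means $W_{\alpha}(\widehat{\theta})=0$ for all $\alpha$. By the transformation law of the Tanaka--Webster connection under a conformal change of contact form (\cite{l}), there is a linear third-order operator $L_{\alpha}$, built from $\partial_{b}$, $\overline{\partial}_{b}$ and the pseudohermitian data of $\theta_{0}$, such that $W_{\alpha}(\widehat{\theta})$ vanishes if and only if $f$ solves the overdetermined system $L_{\alpha}(f)=-W_{\alpha}(\theta_{0})$, $\alpha=1,\dots,n$. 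This is where the hypothesis $c_{1}(T_{1,0}M)=0$ must enter: it forces the pseudohermitian Ricci $(1,1)$-form $\rho_{\theta_{0}}$ to be exact, so $W_{\alpha}(\theta_{0})$ can be rewritten through a global real potential $1$-form for $\rho_{\theta_{0}}$, and one checks that the integrability obstruction to reducing the full system to a single scalar equation is governed by $c_{1}(T_{1,0}M)$, hence disappears.

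\smallskip

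It then remains to solve one scalar fourth-order equation. Applying $\nabla^{\overline{\alpha}}$ to $L_{\alpha}(f)=-W_{\alpha}(\theta_{0})$ and taking real parts, and using that by (\ref{c}) the $Q$-curvature is precisely $-\operatorname{Re}[\nabla^{\overline{\alpha}}W_{\alpha}]$, i.e. a divergence of $W$, the system yields a subelliptic equation
\begin{equation*}
P_{0}\,f=c_{n}\,Q(\theta_{0}),\qquad c_{n}\neq 0,
\end{equation*}
where $P_{0}$ is the (fourth-order) CR Paneitz operator of $\theta_{0}$: self-adjoint and subelliptic on the closed manifold $M$, with kernel the CR-pluriharmonic functions (at least when $P_{0}\geq 0$). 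By subelliptic $L^{2}$-theory $P_{0}$ has closed range, so the equation is solvable precisely when $Q(\theta_{0})$ is $L^{2}$-orthogonal to $\ker P_{0}$, and any solution $f$ is then automatically smooth by subelliptic regularity; orthogonality to the constants holds for free since $Q$ is a divergence, so $\int_{M}Q(\theta_{0})\,\theta_{0}\wedge (d\theta_{0})^{n}=0$. I expect the main obstacle to be establishing, from $c_{1}(T_{1,0}M)=0$ alone, the remaining compatibility condition $\int_{M}Q(\theta_{0})\,\varphi\,\theta_{0}\wedge(d\theta_{0})^{n}=0$ for every nonconstant CR-pluriharmonic $\varphi$: writing $\varphi$ locally as the real part of a CR function and integrating by parts turns this into a pairing of $W_{\alpha}(\theta_{0})$ against $\partial_{b}\varphi$, and the terms produced by the pseudohermitian torsion $A_{\alpha\beta}$ --- precisely what is absent in the Sasakian case --- do not obviously cancel; here the sign of $P_{0}$ matters, since $P_{0}\geq 0$ makes $\ker P_{0}$ behave like the kernel of a genuine elliptic operator and the pairing tractable. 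This is why the unconditional conjecture is hard.

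\smallskip

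What the Bochner-type formulae of Theorem \ref{t31} provide instead is a way around this PDE: for the vector $W_{\alpha}$ they give a weighted $\overline{\partial}_{b}$-Bochner identity in which the CR Paneitz operator occurs as one term, so that \emph{under} nonnegativity of the relevant pseudohermitian curvature together with control of $P_{0}$, integrating by parts forces $W_{\alpha}\equiv 0$ directly for a suitable choice of $\theta$. This is the criterion of Theorem \ref{T2} and Theorem \ref{T2a}, and it is exactly what is needed in the closed, spherical, nonnegatively curved setting feeding the CR Frankel results. For the conjecture in full generality one would instead want a variational argument --- minimizing a total-$Q$-type functional on $\{e^{2f}\theta_{0}\}$ whose first variation is $W_{\alpha}$, and proving existence and smoothness of the minimizer --- the analytic difficulty being the lack of coercivity and the merely subelliptic, rather than elliptic, nature of the Euler--Lagrange operator.
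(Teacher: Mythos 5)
You were asked about Conjecture \ref{conj1}, which is Lee's conjecture: the paper does not prove it, and neither do you --- your text is explicitly a strategy sketch that ends by explaining why the unconditional statement is hard. So there is no proof here to certify; what can be assessed is whether your sketch matches how the paper actually localizes the obstruction, and on that point your diagnosis is off in one substantive way. The paper (following Lee, refined in Section 3) uses $c_{1}(T_{1,0}M)=0$ to produce $\sigma$ with $d\omega_{\alpha}{}^{\alpha}=d\sigma$, applies Kohn's Hodge theory to split $\sigma_{\overline{\alpha}}=\varphi_{\overline{\alpha}}+\gamma_{\overline{\alpha}}$ with $\gamma\in\ker(\square_{b})$, and obtains $W_{\alpha}=2P_{\alpha}u+in\left(A_{\alpha\beta}\gamma_{\overline{\beta}}-\gamma_{\alpha,0}\right)$ (Lemma \ref{l32}); the candidate contact form is $\widetilde{\theta}=e^{\frac{2u}{n+2}}\theta$, and it is pseudo-Einstein if and only if the Kohn-harmonic representative $\gamma$ of the Kohn--Rossi class satisfies $\gamma_{\overline{\alpha},\beta}+\gamma_{\beta,\overline{\alpha}}=0$ (Lemma \ref{l33}), equivalently $A_{\alpha\beta}\gamma_{\overline{\beta}}-\gamma_{\alpha,0}=0$ (Lemma \ref{l34}). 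The whole difficulty of the conjecture therefore sits in the harmonic part $\gamma$ and its interaction with the torsion, which is exactly what the Bochner formulae of Theorem \ref{t31} are designed to control under hypotheses such as (\ref{2019A}) or those of Theorem \ref{T2a}.

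By contrast, the compatibility condition you single out as the main obstacle --- orthogonality of $Q(\theta_{0})$ to $\ker P_{0}$, i.e.\ to the CR-pluriharmonic functions --- is not the obstruction at all in this framework: Lemma \ref{l41}(i) proves $Q_{\ker}=0$ unconditionally (given $c_{1}=0$, $n\geq2$), by the scaling trick $u\mapsto u+CQ_{\ker}$ in the Bochner identity, so your scalar equation $P_{0}f=c_{n}Q$ is always solvable modulo $\ker P_{0}$. Solving it, however, only removes the $2P_{\alpha}u$ part of $W_{\alpha}$; the residual vector $in\left(A_{\alpha\beta}\gamma_{\overline{\beta}}-\gamma_{\alpha,0}\right)$ is not of the form $P_{\alpha}(\text{function})$ and cannot be eliminated by any further conformal change, which is precisely why extra hypotheses (nonnegativity of $\int_{M}(Ric-\frac{1}{2}Tor)(\gamma,\gamma)\,d\mu$, vanishing of $H^{0,1}_{\overline{\partial}_{b}}(M)$, or the $Tor^{\prime}$ condition) appear in Theorem \ref{T2}, Corollary \ref{C12} and Theorem \ref{T2a}. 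Two smaller corrections: for $n\geq2$ the CR Paneitz operator is automatically nonnegative with $\ker P_{0}=\ker P_{\beta}$ (Remark \ref{r1}), so no positivity of $P_{0}$ needs to be assumed; and the conformal transformation law (Lemma \ref{l21}) carries an extra term in the trace-free Ricci of the new structure, so your system $L_{\alpha}(f)=-W_{\alpha}(\theta_{0})$ is not simply a linear third-order system in $f$ as written.
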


To set up the method, we recall J. J. Kohn's Hodge theory for the
$\overline{\partial}_{b}$ complex (\cite{k}). Let $(M,J,\theta)$ be a closed,
strictly pseudoconvex CR $(2n+1)$-manifold and $\eta\in\Omega^{0,1}\left(
M\right)  $ a smooth $\left(  0,1\right)  $-form on $M$ with
\[
\overline{\partial}_{b}\eta=0.
\]
Then there exists a smooth complex-valued function $\varphi=u+iv\in C_{%
\mathbb{C}
}^{\infty}\left(  M\right)  $ and a smooth $\left(  0,1\right)  $-form
$\gamma\in\Omega^{0,1}\left(  M\right)  $ for $\gamma=\gamma_{\overline
{\alpha}}\theta^{\overline{\alpha}}$ such that
\begin{equation}
\left(  \eta-\overline{\partial}_{b}\varphi\right)  =\gamma\in\ker\left(
\square_{b}\right)  , \label{2019C}%
\end{equation}
where $\square_{b}=2\left(  \overline{\partial}_{b}\overline{\partial}%
_{b}^{\ast}+\overline{\partial}_{b}^{\ast}\overline{\partial}_{b}\right)  $ is
the Kohn-Rossi Laplacian.

Let the first Chern class $c_{1}(T^{1,0}M)$ of $T^{1,0}M$ be represented by
$\Theta$ with
\[
c_{1}(T^{1,0}M)=\frac{i}{2\pi}[d\omega_{\alpha}{}^{\alpha}]=\frac{i}{2\pi
}[\Theta]
\]
and
\[
\Theta=R_{\alpha\overline{\beta}}\theta^{\alpha}\wedge\theta^{\overline{\beta
}}+A_{\mu\alpha,\overline{\alpha}}{}\theta^{\mu}\wedge\theta-A_{\overline{\mu
}\overline{\alpha},\alpha}\theta^{\overline{\mu}}\wedge\theta,
\]
which is the purely imaginary two-form. In this paper, \ we assume
$c_{1}(T_{1,0}M)=0.$ Then there is a pure imaginary $1$-form
\[
\sigma=\sigma_{\overline{\alpha}}\theta^{\overline{\alpha}}-\sigma_{\alpha
}\theta^{\alpha}+i\sigma_{0}\theta
\]
with
\begin{equation}
d\omega_{\alpha}^{\alpha}=d\sigma=\Theta\label{2019aaa}%
\end{equation}
for the pure imaginary Webster connection form $\omega_{\alpha}^{\alpha}.$ As
in Lemma \ref{l32}, we choose the $\left(  0,1\right)  $-form $\eta\in
\Omega^{0,1}\left(  M\right)  $
\[
\eta=\sigma_{\overline{\alpha}}\theta^{\overline{\alpha}}.
\]
Then $\sigma_{\overline{\alpha}}\theta^{\overline{\alpha}}$ is $\overline
{\partial}_{b}$-closed and the Kohn-Rossi solution is
\begin{equation}
\varphi_{\overline{\alpha}}=\sigma_{\overline{\alpha}}-\gamma_{\overline
{\alpha}}. \label{20aa}%
\end{equation}
By combining the CR Bochner-type estimates as in Theorem \ref{t31}, we are
able to prove the existence theorem of pseudo-Einstein contact structures
$\widetilde{\theta}=e^{\frac{2u}{n+2}}\theta$ in\ a closed, strictly
pseudoconvex CR $(2n+1)$-manifold of vanishing first Chern class as in Theorem
\ref{T2} and Theorem \ref{T2a} for $n\geq2$. \ However, it follows from
(\ref{4aa}), (\ref{01}), (\ref{2020b}), and (\ref{2019b}) that $\theta$ is
also a pseudo-Einstein contact structure only if its CR $Q$-curvature is CR-pluriharmonic.

Therefore by inspirations from Theorem \ref{T51}, Lee Conjecture \ref{conj1},
and results as in \cite{cj}, \cite{bs} and \cite{hs}, we make the following CR
analogue Frankel conjecture :

\begin{conjecture}
\label{conj2} ( \textbf{CR Frankel Conjecture}) \ Let $(M,J,\theta)$ be a
closed, spherical, strictly pseudoconvex CR $(2n+1)$-manifold of the vanishing
first Chern class $c_{1}(T_{1,0}M),n\geq2.$ Then the universal covering of $M$
is CR equivalent to the standard CR sphere $(\mathbf{S}^{2n+1},\widehat
{J},\widehat{\theta})$ if $\theta$ has the positive constant Tanaka-Webster
scalar curvature $R$ and its CR $Q$-curvature is CR-pluriharmonic.
\end{conjecture}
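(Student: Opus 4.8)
The plan is to convert the $Q$-curvature hypothesis into a genuine pseudo-Einstein structure for $\theta$ itself, and then to exploit sphericity together with the rigidity of Sasakian space forms. First I would run Kohn's Hodge decomposition exactly as set up in the excerpt: since $c_{1}(T_{1,0}M)=0$, fix the purely imaginary $1$-form $\sigma=\sigma_{\overline{\alpha}}\theta^{\overline{\alpha}}-\sigma_{\alpha}\theta^{\alpha}+i\sigma_{0}\theta$ with $d\omega_{\alpha}{}^{\alpha}=d\sigma=\Theta$, take $\eta=\sigma_{\overline{\alpha}}\theta^{\overline{\alpha}}\in\Omega^{0,1}(M)$, which is $\overline{\partial}_{b}$-closed, and solve $\eta-\overline{\partial}_{b}\varphi=\gamma\in\ker(\square_{b})$ with $\varphi=u+iv$, so that $\varphi_{\overline{\alpha}}=\sigma_{\overline{\alpha}}-\gamma_{\overline{\alpha}}$. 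By Theorem \ref{T2} and Theorem \ref{T2a}, the rescaled contact form $\widetilde{\theta}=e^{\frac{2u}{n+2}}\theta$ is then pseudo-Einstein.

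The crucial second step is to promote this to the statement that $\theta$ itself is pseudo-Einstein, i.e. $W_{\alpha}=R,_{\alpha}-inA_{\alpha\beta},^{\beta}=0$. Here I would combine the transformation law of $W_{\alpha}$ under a conformal change $\theta\mapsto e^{2f}\theta$ with the CR Bochner-type identities of Theorem \ref{t31}, which express integrals of $|W_{\alpha}|^{2}$ together with the relevant CR Paneitz terms in terms of $Q$ and its CR-pluriharmonic component. When $Q$ is CR-pluriharmonic the obstruction terms collapse, and the surviving integral identity --- using the nonnegativity and kernel structure of the CR Paneitz operator and the positivity and constancy of $R$ --- forces $W_{\alpha}\equiv0$, that is, $R_{\alpha\overline{\beta}}=\frac{R}{n}h_{\alpha\overline{\beta}}$. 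I expect this to be the main obstacle: one must carefully control the CR Paneitz operator and rule out nontrivial kernel contributions, and this is precisely where the hypotheses $n\geq2$, the pluriharmonicity of $Q$, and the positivity and constancy of $R$ all enter.

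With $\theta$ now pseudo-Einstein and $R$ a positive constant, I would invoke Theorem \ref{T51}: a closed, spherical, strictly pseudoconvex CR $(2n+1)$-manifold admitting a pseudo-Einstein contact form of positive constant Tanaka--Webster scalar curvature is a Sasakian space form. Sphericity annihilates the Chern--Moser (Cartan) curvature, and together with the pseudo-Einstein equation and the pseudohermitian Bianchi and commutation identities this drives the pseudohermitian torsion $A_{\alpha\beta}$ to zero, so $(M,J,\theta)$ is Sasakian; constancy of the sectional-type curvature then identifies it as a Sasakian space form.

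Finally, a Sasakian space form with positive Tanaka--Webster scalar curvature carries a Riemannian metric of positive Ricci curvature (\cite{cc}), so by Myers' theorem $\pi_{1}(M)$ is finite; hence $M$ is a finite quotient of the standard CR sphere (\cite{t}), equivalently its now-Sasakian, positively curved universal cover falls under Proposition \ref{Pb} of He--Sun (\cite{hs}). Therefore the universal covering of $M$ is globally CR equivalent to $(\mathbf{S}^{2n+1},\widehat{J},\widehat{\theta})$, which is the desired conclusion (this is what is collected in Theorem \ref{T13}, Corollary \ref{C13}, Corollary \ref{C15} and Theorem \ref{T14}). The delicate part of the argument remains the second step, namely establishing rigorously that CR-pluriharmonicity of $Q$ forces $W_{\alpha}=0$; everything downstream is rigidity of spherical and Sasakian structures plus standard comparison-geometry topology.
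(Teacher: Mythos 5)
There is a genuine gap, and it is located exactly where you pass from the hypotheses of the statement to ``by Theorem \ref{T2} and Theorem \ref{T2a}, $\widetilde{\theta}=e^{\frac{2u}{n+2}}\theta$ is pseudo-Einstein.'' Those two theorems are not unconditional: Theorem \ref{T2} requires the curvature hypothesis $\int_{M}(Ric-\frac{1}{2}Tor)(\gamma,\gamma)\,d\mu\geq0$ on the harmonic part $\gamma\in\ker(\square_{b})$ of the Kohn decomposition, and Theorem \ref{T2a} requires that $\eta=\sigma_{\overline{\alpha}}\theta^{\overline{\alpha}}$ itself lie in $\ker(\square_{b})$ together with $\int_M Tor^{\prime}(\eta,\eta)\,d\mu=0$. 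None of these conditions follows from what the statement assumes (sphericity, $c_{1}(T_{1,0}M)=0$, positive constant $R$, CR-pluriharmonic $Q$). Without such an assumption the key Bochner identity (\ref{3}) of Theorem \ref{t31} contains the indefinite term $\int_{M}(Ric-\frac{1}{2}Tor)(\gamma,\gamma)\,d\mu$, and one cannot conclude $\gamma_{\overline{\alpha},\beta}+\gamma_{\beta,\overline{\alpha}}=0$, which is precisely Lee's criterion (Lemma \ref{l33}) for $\widetilde{\theta}$ to be pseudo-Einstein. Your second step (pluriharmonicity of $Q$ forcing $W_{\alpha}=0$, hence $\theta$ pseudo-Einstein) likewise rests on first having $\widetilde{\theta}$ pseudo-Einstein, via Lemma \ref{l41}(ii); it does not stand on its own. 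In fact, even the existence of some pseudo-Einstein contact form under $c_{1}(T_{1,0}M)=0$ is Lee's Conjecture \ref{conj1}, still open.

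This matters because the statement you were asked about is posed in the paper as a \emph{conjecture}, not a theorem: the paper affirms it only under the additional hypotheses you silently imported --- nonnegativity of $(Ric-\frac{1}{2}Tor)$ on smooth representatives of $H_{\overline{\partial}_{b}}^{0,1}(M)$ (Theorem \ref{T13}), vanishing of $H_{\overline{\partial}_{b}}^{0,1}(M)$ or $Ric\geq0$ on its representatives (Corollary \ref{C13}, Corollary \ref{C15}), or $\eta\in\ker(\square_{b})$ with $Tor^{\prime}(\eta,\eta)=0$ (Theorem \ref{T14}). Your downstream argument --- Theorem \ref{T51}: spherical plus pseudo-Einstein with positive constant $R$ kills the torsion and yields a Sasakian space form, then positive Ricci, finite fundamental group, and Tanno's classification give the standard CR sphere as universal cover --- agrees with the paper and is fine once a pseudo-Einstein $\theta$ is in hand. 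But as written, your proposal does not prove the conjecture; it reproduces the paper's conditional results while treating their extra hypotheses as if they were automatic.
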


Now we are ready to apply results as in section $4$ and section $5$ to affirm
the CR analogue of Frankel conjecture via the nonnegativity of pseudohermitian
curvature as in (\ref{2019B}) and smooth representative of the first
Kohn-Rossi cohomology group. In fact, as a consequence of Theorem \ref{T2} and
Theorem \ref{T51}, we have

\begin{theorem}
\label{T13} Let $(M,J,\theta)$ be a closed, spherical, strictly pseudoconvex
CR $(2n+1)$-manifold of $c_{1}(T^{1,0}M)=0,$ $n\geq2$. Suppose that
\begin{equation}
(Ric-\frac{1}{2}Tor)\left(  \rho,\rho\right)  \geq0 \label{2019B}%
\end{equation}
on the space of smooth representatives $(0,1)$-form $\rho=\rho_{\overline
{\alpha}}\theta^{\overline{\alpha}}\in\Omega^{0,1}\left(  M\right)  $ of the
first Kohn-Rossi cohomology group $H_{\overline{\partial}_{b}}^{0,1}(M)$ (i.e.
$\rho\in\ker\left(  \square_{b}\right)  )$. Then the universal covering of $M$
is CR equivalent to the standard CR sphere $(\mathbf{S}^{2n+1},\widehat
{J},\widehat{\theta})$ if $\theta$ has the positive constant Tanaka-Webster
scalar curvature $R$ and the CR-pluriharmonic\ $Q$-curvature. Here
$Ric(\rho,\rho)=R_{\alpha\overline{\beta}}\rho_{\overline{\alpha}}\rho_{\beta
}$ and $Tor\left(  \rho,\rho\right)  :=i(A_{\overline{\alpha}\overline{\beta}%
}\rho_{\alpha}\rho_{\beta}-A_{\alpha\beta}\rho_{\overline{\alpha}}%
\rho_{\overline{\beta}}).$
\end{theorem}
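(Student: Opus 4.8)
The plan is to assemble Theorem \ref{T13} from the machinery already advertised in the introduction, treating it as essentially a synthesis result. The logical skeleton I would follow: (i) use the hypothesis $c_1(T^{1,0}M)=0$ together with the Kohn-Rossi Hodge decomposition (\ref{2019C}) and the specific choice $\eta = \sigma_{\overline{\alpha}}\theta^{\overline{\alpha}}$ to produce the harmonic representative $\gamma = \gamma_{\overline{\alpha}}\theta^{\overline{\alpha}} \in \ker(\square_b)$ and the Kohn-Rossi solution (\ref{20aa}); (ii) invoke the existence Theorem \ref{T2} to obtain that, under the stated hypotheses (positive constant scalar curvature $R$ and CR-pluriharmonic $Q$-curvature, so that the obstruction coming from (\ref{4aa}), (\ref{01}), (\ref{2020b}), (\ref{2019b}) vanishes), the contact form $\theta$ itself is already pseudo-Einstein; (iii) feed the curvature positivity (\ref{2019B}) into the CR Bochner-type formulae of Theorem \ref{t31} to force the harmonic $(0,1)$-form $\gamma$ — and more to the point the pseudohermitian torsion $A_{\alpha\beta}$ — to vanish, i.e.\ to conclude $(M,J,\theta)$ is Sasakian; (iv) apply Theorem \ref{T51} (the rigidity statement: a closed, spherical, strictly pseudoconvex, pseudo-Einstein CR $(2n+1)$-manifold with positive constant Tanaka-Webster scalar curvature is a Sasakian space form) to identify $M$ up to the quotient; (v) close the argument with the Riemannian comparison input of \cite{cc} and \cite{t} — positive Ricci curvature gives finite $\pi_1$ and the universal cover is globally CR equivalent to the standard sphere $(\mathbf{S}^{2n+1},\widehat{J},\widehat{\theta})$.

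The technical heart — step (iii) — is where I expect to spend real effort. The idea is to apply the Bochner formula of Theorem \ref{t31} to $\gamma \in \ker(\square_b)$; integrating over the closed manifold $M$ against the volume form $\theta\wedge(d\theta)^n$, the exact terms drop out and one is left with an integrand that is a sum of a manifestly nonnegative ``gradient-squared'' piece (covariant derivatives $\nabla \gamma$, including the piece controlled by the CR Paneitz operator that distinguishes this from Lee's formula (\ref{2018AA})) plus precisely the curvature term $(Ric - \tfrac12 Tor)(\gamma,\gamma)$. Hypothesis (\ref{2019B}) makes that curvature term $\geq 0$ pointwise on $\ker(\square_b)$, so the integral of a nonnegative quantity vanishes, forcing each term to vanish separately: $\gamma$ is parallel and the torsion contribution $Tor(\gamma,\gamma)$ vanishes. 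Running this for a spanning set of harmonic representatives, combined with the pseudo-Einstein normalization from step (ii), should pin down $A_{\alpha\beta} = 0$ everywhere. The delicate point is bookkeeping: making sure the Paneitz-operator terms in Theorem \ref{t31} genuinely have a sign after integration (the CR Paneitz operator is only nonnegative under extra hypotheses in general, but here sphericality plus $n\geq 2$ should suffice, or the formula is arranged so its contribution is automatically controlled), and confirming that the pseudo-Einstein condition (\ref{2019x}), $W_\alpha = R,_\alpha - inA_{\alpha\beta},^\beta = 0$, together with constancy of $R$ closes the loop forcing the torsion itself rather than merely a torsion-divergence to vanish.

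Steps (iv) and (v) I would treat as essentially citations: once $\theta$ is pseudo-Einstein, $R$ is a positive constant, and the torsion vanishes (Sasakian), Theorem \ref{T51} gives the Sasakian-space-form conclusion, hence constant pseudohermitian sectional curvature; the Tanaka-Webster connection then has positive-definite adapted Ricci tensor, so by \cite{cc} $M$ carries a Riemannian metric of positive Ricci curvature, hence finite fundamental group, and by the rigidity of \cite{t} the universal cover is CR-equivalent to $(\mathbf{S}^{2n+1},\widehat{J},\widehat{\theta})$. The only thing to verify carefully in this last stretch is dimensional consistency — that the $n\geq 2$ hypothesis needed for the Chern-class obstruction and for Theorem \ref{T2} is compatible with the dimension ranges in \cite{cc}, \cite{t}, \cite{cj}, \cite{bs}, \cite{hs} — but no new computation is required. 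So overall the proof is: Hodge decomposition produces a harmonic $\gamma$; Theorem \ref{T2} upgrades $\theta$ to pseudo-Einstein; the Bochner formula of Theorem \ref{t31} plus (\ref{2019B}) kills the torsion; Theorem \ref{T51} plus \cite{cc}, \cite{t} finish — with the Bochner/Paneitz sign analysis being the one genuinely load-bearing step.
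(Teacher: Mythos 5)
Your overall skeleton does match the paper's route (Kohn--Rossi decomposition, Theorem~\ref{T2} to make the contact form pseudo-Einstein, then Theorem~\ref{T51} together with \cite{cc} and \cite{t}), but the step you single out as the ``technical heart'' --- step (iii) --- is wrong as described, and it misplaces where the Sasakian conclusion actually comes from. The Bochner identity (\ref{3}) of Theorem~\ref{t31}, fed with the hypothesis (\ref{2019B}) applied to the harmonic representative $\gamma\in\ker(\square_b)$, forces only $\gamma_{\alpha,\beta}=0$, $\gamma_{\overline{\alpha},\beta}+\gamma_{\beta,\overline{\alpha}}=0$ and the vanishing of the curvature term; via Lemma~\ref{l33} and Lemma~\ref{l41} (with $Q^{\perp}=0$, hence $u^{\perp}=0$ and $W_{\alpha}=0$) this yields exactly that $\widetilde{\theta}$, and then $\theta$ itself, is pseudo-Einstein --- which is Theorem~\ref{T2}. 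It cannot ``pin down $A_{\alpha\beta}=0$ everywhere'': at best one extracts statements about torsion contracted with $\gamma$ (such as $A_{\alpha\beta}\gamma_{\overline{\beta}}=0$ or $\int_M Tor(\gamma,\gamma)\,d\mu=0$ as in Lemma~\ref{l35}), and in the extreme case $H_{\overline{\partial}_b}^{0,1}(M)=0$ one has $\gamma=0$ and the formula says nothing at all about the torsion. There is no ``spanning set of harmonic representatives'' argument available.

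In the paper the torsion is killed in a genuinely different place: inside Theorem~\ref{T51}, via Lemma~\ref{l51}. There one uses the spherical hypothesis to write the full curvature tensor in terms of $k=R/n$ through (\ref{d12}), the pseudo-Einstein condition plus constancy of $R$ and the contracted Bianchi identity to get $A_{\alpha\gamma,\overline{\gamma}}=0$, and then the integral identity of Lemma~\ref{l51} forces $A_{\alpha\gamma}=0$ when $k>0$; only then is $M$ a Sasakian space form, and \cite{cc}, \cite{t} finish as you say in step (v). Since your step (iv) cites Theorem~\ref{T51} with essentially its correct statement, your chain can be repaired simply by deleting the claim in (iii) that the Bochner formula yields $A_{\alpha\beta}=0$; but as written that claim is a step that would fail. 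Two smaller inaccuracies: the sign analysis you worry about is not an issue in the route actually used, because formula (\ref{3}) contains no Paneitz term (the $Q+P_0u$ bookkeeping is done in Lemma~\ref{l41}, and by Remark~\ref{r1} the CR Paneitz operator is nonnegative for all closed manifolds with $n\geq2$, sphericality playing no role there); and the positive constant scalar curvature is not needed to invoke Theorem~\ref{T2} --- it enters only at the Theorem~\ref{T51} stage.
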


We observe that the pseudohermitian curvature quantity (\ref{2019B}) appears
in the CR Bochner formula (\ref{2020ABC}) as in the paper \cite{cc}.

In particular, as\ a consequence of Lemma \ref{l35}, Proposition \ref{P1}, and
Theorem \ref{T13}, we have

\begin{corollary}
\label{C13} Let $(M,J,\theta)$ be a closed, spherical, strictly pseudoconvex
CR $(2n+1)$-manifold of $c_{1}(T^{1,0}M)=0,$ $n\geq2$ with either

(i) vanishing of the first Kohn-Rossi cohomology group $H_{\overline{\partial
}_{b}}^{0,1}(M)$, or

(ii)
\[
Ric\left(  \rho,\rho\right)  \geq0
\]
on the space of smooth representatives $\rho$ of the first Kohn-Rossi
cohomology group $H_{\overline{\partial}_{b}}^{0,1}(M).$

Then the universal covering of $M$ is CR equivalent to the standard CR sphere
$(\mathbf{S}^{2n+1},\widehat{J},\widehat{\theta})$ if $\theta$ has the
positive constant Tanaka-Webster scalar curvature $R$ and the
CR-pluriharmonic\ $Q$-curvature.
\end{corollary}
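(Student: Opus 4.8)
The plan is to reduce both cases (i) and (ii) to an application of Theorem \ref{T13}, whose hypotheses are almost identical except for the sign condition \eqref{2019B} on the quantity $(Ric - \tfrac12 Tor)(\rho,\rho)$ over smooth harmonic representatives $\rho \in \ker(\square_b)$ of $H^{0,1}_{\overline{\partial}_b}(M)$. In case (i), the space of smooth representatives is empty (the zero class only), so \eqref{2019B} holds vacuously; one still needs to check that the construction of the pseudo-Einstein contact form $\widetilde\theta = e^{2u/(n+2)}\theta$ goes through, but since $c_1(T^{1,0}M) = 0$ forces, via \eqref{2019aaa} and \eqref{20aa}, $\varphi_{\overline\alpha} = \sigma_{\overline\alpha} - \gamma_{\overline\alpha}$ with $\gamma = 0$ when $H^{0,1}_{\overline{\partial}_b}(M) = 0$, the Kohn-Rossi solution $\varphi = u + iv$ exists and Theorem \ref{T2} applies directly. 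Thus I would first invoke Lemma \ref{l35} (which I expect identifies $\ker(\square_b)$ with the space of smooth representatives and records its vanishing under hypothesis (i)) and Proposition \ref{P1} to produce $\widetilde\theta$ pseudo-Einstein, then conclude by Theorem \ref{T13} that $M$ is covered by the standard CR sphere.

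For case (ii), the point is that $Ric(\rho,\rho) \geq 0$ alone does not obviously give $(Ric - \tfrac12 Tor)(\rho,\rho) \geq 0$, since $Tor(\rho,\rho) = i(A_{\overline\alpha\overline\beta}\rho_\alpha\rho_\beta - A_{\alpha\beta}\rho_{\overline\alpha}\rho_{\overline\beta})$ can have either sign. So the real content here must be that on a \emph{spherical} CR manifold satisfying the other hypotheses — positive constant scalar curvature $R$, CR-pluriharmonic $Q$-curvature, $c_1 = 0$ — one can already conclude, via Theorem \ref{T2} and Theorem \ref{T51}, that $\widetilde\theta$ is pseudo-Einstein, and then that a pseudo-Einstein spherical CR manifold of positive constant scalar curvature is in fact Sasakian (torsion-free), whence $Tor \equiv 0$ and the two curvature conditions coincide. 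Concretely, I would: (a) use Theorem \ref{T2}/Theorem \ref{T2a} to get a pseudo-Einstein contact form; (b) apply Theorem \ref{T51} — which the introduction tells us asserts that a closed spherical strictly pseudoconvex pseudo-Einstein CR manifold with positive constant Tanaka-Webster scalar curvature is a Sasakian space form — to deduce $A_{\alpha\beta} = 0$; (c) observe that then $Tor(\rho,\rho) = 0$, so hypothesis (ii) yields \eqref{2019B}; (d) apply Theorem \ref{T13}.

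The main obstacle I anticipate is step (b)–(c): one must be careful that Theorem \ref{T51} is being applied to the \emph{original} $\theta$ (for which $R$ is positive constant) rather than to the conformally changed $\widetilde\theta$, and that the pseudo-Einstein property and sphericity are both preserved or transferable between the two contact forms in the relevant direction — sphericity is a CR-invariant so that is fine, but the scalar-curvature normalization is not conformally invariant, so the logical order (pseudo-Einstein $\Rightarrow$ Sasakian $\Rightarrow$ torsion-free $\Rightarrow$ \eqref{2019B}) has to be threaded through the correct contact form at each stage. A secondary subtlety is confirming that under hypothesis (i) the CR Bochner machinery of Theorem \ref{t31} still delivers the needed vanishing even though there is no nonzero $\rho$ to test against; this should follow because the obstruction term in the existence argument is exactly the $\square_b$-harmonic part $\gamma$, which is now zero. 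Once these bookkeeping points are settled, the corollary is immediate from Theorem \ref{T13}.
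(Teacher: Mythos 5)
Your treatment of case (i) is fine and agrees with the paper: when $H^{0,1}_{\overline{\partial}_b}(M)=0$ the harmonic part $\gamma$ vanishes, condition (\ref{2019B}) holds trivially, and Theorem \ref{T13} (or directly Lemma \ref{l33}) applies. The problem is case (ii), where your argument is circular. Step (a) invokes Theorem \ref{T2}, but its hypothesis is exactly the integral condition (\ref{2019A}), $\int_M(Ric-\tfrac12 Tor)(\gamma,\gamma)\,d\mu\geq 0$, which is the very thing you admit you do not have; Theorem \ref{T2a} is no better, since its hypotheses ($\eta\in\ker\square_b$ and $\int_M Tor'(\eta,\eta)\,d\mu=0$) are also not consequences of hypothesis (ii) together with sphericity, constant $R$, and pluriharmonic $Q$. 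Step (b) then applies Theorem \ref{T51}, which requires $\theta$ itself (not $\widetilde\theta$) to be pseudo-Einstein, a fact you have not yet established at that point. So the chain (a)--(c) assumes what it is supposed to produce, and the vanishing of the torsion that you want to use in (c) to convert $Ric\geq 0$ into (\ref{2019B}) is an output of Theorem \ref{T51}'s proof, not something available beforehand.

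The gap comes from misreading Lemma \ref{l35}: it is not a statement identifying $\ker(\square_b)$ with the space of representatives, but precisely the criterion that $\int_M Ric(\gamma,\gamma)\,d\mu\geq 0$ forces $\widetilde\theta=e^{2u/(n+2)}\theta$ to be pseudo-Einstein. Its proof rests on Lee's identity (\ref{2018AA}), $\int_M Ric(\gamma,\gamma)\,d\mu+\frac{1}{n-1}\sum\int_M|\gamma_{\alpha,\overline{\beta}}|^2\,d\mu+\sum\int_M|\gamma_{\alpha,\beta}|^2\,d\mu=0$, in which no torsion term appears; this is exactly how the sign issue you correctly flagged is resolved. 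The paper's route for (ii) is therefore: hypothesis (ii) applied to the harmonic representative $\gamma\in\ker(\square_b)$ gives $\int_M Ric(\gamma,\gamma)\,d\mu\geq 0$; Lemma \ref{l35} (equivalently Proposition \ref{P1}(i)) yields $\gamma_{\alpha,\overline{\beta}}=0=\gamma_{\alpha,\beta}$ and hence $\widetilde\theta$ pseudo-Einstein via Lemma \ref{l33}; Proposition \ref{P1}(ii), using the CR-pluriharmonic $Q$-curvature together with (\ref{2019a})--(\ref{2019b}), upgrades this to $\theta$ itself being pseudo-Einstein; and finally Theorem \ref{T51} applied to $\theta$ (spherical, pseudo-Einstein, positive constant $R$) gives the standard CR sphere as universal cover. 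In particular case (ii) never needs to be routed through Theorem \ref{T13} at all; repairing your proof amounts to replacing steps (a)--(d) by this direct application of Lemma \ref{l35}, Proposition \ref{P1}, and Theorem \ref{T51}.
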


Let $(M,J,\theta)$ be a closed, strictly pseudoconvex CR $(2n+1)$-manifold in
the boundary of a bounded strongly pseudoconvex domain $D$ in $\mathbb{C}%
^{n+2},n\geq2$. In the paper of \cite[Theorem C.]{y}, Yau proved that $M$ is a
boundary of the complex sub-manifold $V$ of $D\setminus M$ if and only if
Kohn-Rossi cohomology groups $H_{\overline{\partial}_{b}}^{p,q}(M)$ are zero
for $1\leq q\leq n-1$. Then as a conseqence of Corollary \ref{C13}, we have

\begin{corollary}
\label{C15} Let $(M,J,\theta)$ be a closed, spherical, strictly pseudoconvex
CR $(2n+1)$-manifold of $c_{1}(T^{1,0}M)=0,$ $n\geq2$ in the boundary of a
bounded strongly pseudoconvex domain $D$ in $\mathbb{C}^{n+2}$. Assume that
$M$ is a boundary of the complex sub-manifold $V$ of $D\setminus M$. Then the
universal covering of $M$ is CR equivalent to the standard CR sphere
$(\mathbf{S}^{2n+1},\widehat{J},\widehat{\theta})$ if $\theta$ has the
positive constant Tanaka-Webster scalar curvature $R$ and the
CR-pluriharmonic\ $Q$-curvature.
\end{corollary}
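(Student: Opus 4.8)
The statement chains together two external results with Corollary \ref{C13}. The plan is as follows. First, recall the hypotheses: $M$ is a closed, spherical, strictly pseudoconvex CR $(2n+1)$-manifold with $c_1(T^{1,0}M)=0$, $n\ge 2$, sitting inside the boundary of a bounded strongly pseudoconvex domain $D\subset\mathbb{C}^{n+2}$, and $M$ bounds a complex submanifold $V\subset D\setminus M$. The first step is to invoke Yau's theorem (\cite[Theorem C.]{y}), which is quoted in the excerpt: the condition that $M$ bounds such a $V$ is equivalent to the vanishing of the Kohn-Rossi cohomology groups $H^{p,q}_{\overline\partial_b}(M)$ for $1\le q\le n-1$. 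In particular, taking $q=1$ (which is legitimate since $n\ge 2$ forces $1\le 1\le n-1$), we obtain $H^{0,1}_{\overline\partial_b}(M)=0$.

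Once $H^{0,1}_{\overline\partial_b}(M)=0$ is in hand, the hypothesis (i) of Corollary \ref{C13} is satisfied verbatim. So the second step is simply to apply Corollary \ref{C13}(i): under the remaining standing assumptions — $\theta$ has positive constant Tanaka-Webster scalar curvature $R$ and CR-pluriharmonic $Q$-curvature, and the manifold is closed, spherical, strictly pseudoconvex with $c_1(T^{1,0}M)=0$ and $n\ge 2$ — we conclude that the universal covering of $M$ is CR equivalent to the standard CR sphere $(\mathbf{S}^{2n+1},\widehat J,\widehat\theta)$. This is exactly the desired conclusion, so the proof is complete.

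Since this is a short deduction, the only point requiring any care is checking that the index range in Yau's theorem genuinely covers $q=1$: this needs $n-1\ge 1$, i.e. $n\ge 2$, which is part of the hypothesis, so there is no gap. One should also note that the ambient dimension in Yau's theorem ($D\subset\mathbb{C}^{n+2}$, boundary a CR $(2n+1)$-manifold — here the relevant real hypersurface inside $\partial D$) matches the dimension bookkeeping used in the excerpt's statement of that theorem, so no re-indexing is needed. Thus there is no real obstacle; the corollary is a formal consequence of Yau's characterization together with Corollary \ref{C13}. The proof can be written in a few lines:

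\begin{proof}
Since $M$ bounds the complex submanifold $V$ of $D\setminus M$, Yau's theorem (\cite[Theorem C.]{y}) gives $H^{p,q}_{\overline\partial_b}(M)=0$ for all $1\le q\le n-1$. As $n\ge 2$, the value $q=1$ lies in this range, so in particular the first Kohn-Rossi cohomology group vanishes, $H^{0,1}_{\overline\partial_b}(M)=0$. Hence hypothesis (i) of Corollary \ref{C13} holds. Applying Corollary \ref{C13} to $(M,J,\theta)$ — which is closed, spherical, strictly pseudoconvex with $c_1(T^{1,0}M)=0$, $n\ge 2$, has positive constant Tanaka-Webster scalar curvature $R$, and CR-pluriharmonic $Q$-curvature — we conclude that the universal covering of $M$ is CR equivalent to the standard CR sphere $(\mathbf{S}^{2n+1},\widehat J,\widehat\theta)$.
\end{proof}
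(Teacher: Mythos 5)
Your proposal is correct and follows exactly the paper's own (implicit) argument: invoke Yau's theorem to get vanishing of the Kohn-Rossi cohomology groups for $1\le q\le n-1$ (so in particular $H^{0,1}_{\overline{\partial}_{b}}(M)=0$ since $n\ge 2$), and then apply case (i) of Corollary \ref{C13}. No issues.
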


Furthermore, as a consequence of Theorem \ref{T2a} and Theorem \ref{T51}, we have

\begin{theorem}
\label{T14} Let $(M,J,\theta)$ be a closed, spherical, strictly pseudoconvex
CR $(2n+1)$-manifold of $c_{1}(T^{1,0}M)=0,$ $n\geq2$ with $d\omega_{\alpha
}^{\alpha}=d\sigma,\sigma=\sigma_{\overline{\alpha}}\theta^{\overline{\alpha}%
}-\sigma_{\alpha}\theta^{\alpha}+i\sigma_{0}\theta.$ Assume that $\eta
=\sigma_{\overline{\alpha}}\theta^{\overline{\alpha}}$ satisfies

(i)
\[
\eta\in\ker\left(  \square_{b}\right)  ,
\]

(ii)%
\[
Tor^{\prime}\left(  \eta,\eta\right)  =0.
\]
Then the universal covering of $M$ is CR equivalent to the standard CR sphere
$(\mathbf{S}^{2n+1},\widehat{J},\widehat{\theta})$ if $\theta$ has the
positive constant Tanaka-Webster scalar curvature $R$.
\end{theorem}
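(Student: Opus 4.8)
The plan is to assemble the statement from the existence theorem for pseudo-Einstein contact forms (Theorem \ref{T2a}), the rigidity theorem for spherical pseudo-Einstein structures (Theorem \ref{T51}), and a classical Riemannian topological argument; the only real work is the first step.

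First I would unwind what hypothesis (i) gives. Since $c_{1}(T^{1,0}M)=0$ we have the pure imaginary $1$-form $\sigma=\sigma_{\overline{\alpha}}\theta^{\overline{\alpha}}-\sigma_{\alpha}\theta^{\alpha}+i\sigma_{0}\theta$ with $d\omega_{\alpha}^{\alpha}=d\sigma=\Theta$, and $\eta=\sigma_{\overline{\alpha}}\theta^{\overline{\alpha}}$ is $\overline{\partial}_{b}$-closed, so the Kohn--Rossi decomposition (\ref{2019C}) reads $\eta-\overline{\partial}_{b}\varphi=\gamma\in\ker(\square_{b})$. On a closed, strictly pseudoconvex CR $(2n+1)$-manifold with $n\geq2$ one has $\square_{b}\gamma=0\iff\overline{\partial}_{b}^{\ast}\gamma=0$ and $\overline{\partial}_{b}\gamma=0$ (integrate by parts), hence $\ker(\square_{b})\perp\overline{\partial}_{b}(C^{\infty}_{\mathbb{C}}(M))$. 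Under hypothesis (i), $\eta\in\ker(\square_{b})$, so $\overline{\partial}_{b}\varphi=\eta-\gamma$ lies in $\ker(\square_{b})$ and is simultaneously orthogonal to it; thus $\overline{\partial}_{b}\varphi\equiv0$ and we may take $\varphi\equiv0$. Consequently $u=\operatorname{Re}\varphi\equiv0$, so the conformal factor in the candidate contact form $\widetilde{\theta}=e^{2u/(n+2)}\theta$ produced by the construction of Theorem \ref{T2a} is trivial, i.e. $\widetilde{\theta}=\theta$ itself (any alternative choice of $\varphi$ differs by a CR function, hence changes $\widetilde{\theta}$ only by a CR-pluriharmonic factor, which preserves the pseudo-Einstein property).

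Next I would invoke Theorem \ref{T2a}: with $c_{1}(T^{1,0}M)=0$ together with (i)--(ii), its construction yields a pseudo-Einstein contact form, which by the previous paragraph is $\theta$. Concretely, one feeds $\gamma=\eta$ into the CR Bochner-type identities of Theorem \ref{t31} (the formulae built from the CR Paneitz operator): the curvature term $(Ric-\tfrac{1}{2}Tor)(\eta,\eta)$ and the torsion term $Tor^{\prime}(\eta,\eta)$ are the only indefinite pieces appearing there, and the combination $\eta\in\ker(\square_{b})$ with $Tor^{\prime}(\eta,\eta)=0$ collapses the identity to $W_{\alpha}=R,_{\alpha}-inA_{\alpha\beta},^{\beta}\equiv0$, which is the pseudo-Einstein condition (\ref{2019x})--(\ref{2019}) for $\theta$. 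I expect this to be the one genuinely delicate point of the proof: checking that (i) removes the need for any conformal correction, and that (ii) kills precisely the last obstruction term in the integral identity, so that the \emph{given} $\theta$ — already assumed to carry the positive constant Tanaka--Webster scalar curvature $R$ — is pseudo-Einstein, with no transformation law for $R$ to track.

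Finally, $\theta$ is now a pseudo-Einstein contact form of positive constant Tanaka--Webster scalar curvature $R$ on the closed, spherical, strictly pseudoconvex CR manifold $M$, so Theorem \ref{T51} forces $M$ to be a Sasakian space form; in particular the pseudohermitian torsion $A_{\alpha\beta}$ vanishes and, since $R>0$, the pseudohermitian bisectional curvature is positive. By \cite{cc} such an $M$ admits a Riemannian metric of positive Ricci curvature, so $\pi_{1}(M)$ is finite by Myers' theorem; being spherical with finite fundamental group, $M$ is a finite quotient of the standard CR sphere $(\mathbf{S}^{2n+1},\widehat{J},\widehat{\theta})$ by \cite{t} (alternatively, Proposition \ref{Pb} applies directly to the Sasakian space form $M$). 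Therefore the universal covering of $M$ is globally CR equivalent to $(\mathbf{S}^{2n+1},\widehat{J},\widehat{\theta})$, which is the assertion.
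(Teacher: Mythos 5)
Your proposal is correct and follows essentially the same route as the paper: hypothesis (i) forces $\eta=\gamma$ and a CR(-pluriharmonic) conformal factor (the paper's $u^{\bot}=0$), so Theorem \ref{T2a} together with (ii) makes $\theta$ itself pseudo-Einstein, and then Theorem \ref{T51} plus the positive-Ricci/finite-fundamental-group argument gives the standard CR sphere as universal cover. The only slight imprecision is attributing a $(Ric-\tfrac{1}{2}Tor)$ term to the Bochner identity actually used (formula (\ref{3c}) contains no curvature term), but since you invoke Theorem \ref{T2a} as the paper does, this does not affect the argument.
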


In particular, we have

\begin{corollary}
\label{C14} Let $(M,J,\theta)$ be a closed, spherical, strictly pseudoconvex
CR $(2n+1)$-manifold of $c_{1}(T^{1,0}M)=0,$ $n\geq2$ with $d\omega_{\alpha
}^{\alpha}=d\sigma,\sigma=\sigma_{\overline{\alpha}}\theta^{\overline{\alpha}%
}-\sigma_{\alpha}\theta^{\alpha}+i\sigma_{0}\theta.$ Assume that
\[
d\sigma=id(f\theta),
\]
for some smooth, real-valued function $f$. \ Then the universal covering of
$M$ is CR equivalent to the standard CR sphere $(\mathbf{S}^{2n+1},\widehat
{J},\widehat{\theta})$ if $\theta$ has the positive constant Tanaka-Webster
scalar curvature $R$.
\end{corollary}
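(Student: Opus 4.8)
The plan is to obtain Corollary~\ref{C14} as an immediate consequence of Theorem~\ref{T14}, the point being that the assumption $d\sigma=id(f\theta)$ produces a particularly convenient choice of the pure imaginary primitive of $\Theta$. Recall that in the construction preceding Theorem~\ref{T14} the $1$-form $\sigma$ is subject only to the requirements that it be globally defined, pure imaginary, and satisfy $d\sigma=d\omega_{\alpha}{}^{\alpha}=\Theta$; any two such primitives differ by a closed pure imaginary $1$-form, so we are free to replace one by another. Since $d\omega_{\alpha}{}^{\alpha}=d\sigma=id(f\theta)$, the form $\sigma'\colon=if\theta$ is itself an admissible primitive: it is pure imaginary because $f$ and $\theta$ are real, and $d\sigma'=id(f\theta)=\Theta$. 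Written in the normal form $\sigma'=\sigma'_{\overline{\alpha}}\theta^{\overline{\alpha}}-\sigma'_{\alpha}\theta^{\alpha}+i\sigma'_{0}\theta$, one reads off $\sigma'_{\overline{\alpha}}=0$ and $\sigma'_{0}=f$, so the associated $(0,1)$-form is $\eta'\colon=\sigma'_{\overline{\alpha}}\theta^{\overline{\alpha}}=0$.

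First I would check that this $\eta'$ satisfies the two hypotheses of Theorem~\ref{T14}: condition (i) holds because $\square_{b}\,0=0$, i.e. $\eta'=0\in\ker(\square_{b})$, and condition (ii) holds because $Tor'(\cdot,\cdot)$ is quadratic in $\eta$, so $Tor'(\eta',\eta')=0$. Since $\theta$ is assumed to carry positive constant Tanaka--Webster scalar curvature $R$, all the hypotheses of Theorem~\ref{T14} are met with the primitive $\sigma'$, and that theorem yields that the universal covering of $M$ is CR equivalent to $(\mathbf{S}^{2n+1},\widehat{J},\widehat{\theta})$. As the conclusion of Theorem~\ref{T14} concerns $M$ only, it is independent of which admissible primitive one feeds into the argument; if one prefers to keep the given $\sigma$, note that $\sigma-if\theta$ is closed and pure imaginary, so running the Kohn--Rossi decomposition of Lemma~\ref{l32} with $\sigma'$ in place of $\sigma$ gives $\gamma=0$ and $\varphi=0$, and nothing is lost.

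It is worth recording what the hypothesis means structurally, and this is where the only real (but minor) point lies. Expanding $\Theta=id(f\theta)=if\,d\theta+i\,df\wedge\theta$ and matching it, component by component, against the explicit formula $\Theta=R_{\alpha\overline{\beta}}\theta^{\alpha}\wedge\theta^{\overline{\beta}}+A_{\mu\alpha,\overline{\alpha}}\theta^{\mu}\wedge\theta-A_{\overline{\mu}\overline{\alpha},\alpha}\theta^{\overline{\mu}}\wedge\theta$: the $\theta^{\alpha}\wedge\theta^{\overline{\beta}}$ part forces $R_{\alpha\overline{\beta}}$ to be a real-function multiple of $h_{\alpha\overline{\beta}}$, so $\theta$ is pseudo-Einstein, and taking the trace together with the constancy of $R$ forces $f$ to be the constant $-R/n$; the $\theta^{\mu}\wedge\theta$ part then gives $A_{\mu\alpha,\overline{\alpha}}=if_{\mu}=0$, i.e. the divergence of the pseudohermitian torsion vanishes. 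Thus $d\sigma=id(f\theta)$ places $(M,J,\theta)$ exactly in the regime --- pseudo-Einstein, divergence-free torsion, hence $Q$-curvature automatically CR-pluriharmonic --- in which the Bochner identities of Theorem~\ref{t31} and the rigidity of Theorem~\ref{T51}, both invoked inside Theorem~\ref{T14}, apply. The hard part, such as it is, is only to be sure that trading the given $\sigma$ for $if\theta$ is legitimate; this is immediate from the fact that the primitive is determined only up to a closed pure imaginary $1$-form, so there is no genuine obstacle.
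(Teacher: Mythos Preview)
Your argument is correct and is exactly the approach implicit in the paper: since the primitive of $\Theta$ is only determined up to a closed pure imaginary $1$-form, replacing $\sigma$ by $\sigma'=if\theta$ makes $\eta'=0$, so conditions (i) and (ii) of Theorem~\ref{T14} hold trivially and the conclusion follows. Your final paragraph, which unpacks $d\sigma=id(f\theta)$ to see directly that $\theta$ is already pseudo-Einstein with $A_{\alpha\beta,}{}^{\beta}=0$, is a nice supplementary observation that the paper does not spell out; it in fact shows one could bypass Theorem~\ref{T14} altogether and appeal directly to Theorem~\ref{T51}.
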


\begin{remark}
(i) By the contracted Bianchi identity, (\ref{c}), and (\ref{2019b}), then the
CR-pluriharmonic\ $Q$-curvature is equivalent to
\[
A_{\alpha\beta,\overline{\alpha}\overline{\beta}}=0
\]
as in Theorem \ref{T13}, Corollary \ref{C13}, and Corollary \ref{C15}.

(i) For $n=1$, we refer to the authors' previous work where one needs the
positivity condition of the CR Paneitz operator in a closed spherical strictly
pseudoconvex CR $3$-manifold as in \cite{ckl}.
\end{remark}

We briefly describe the methods used in our proofs. In section $2$, we
introduce some basic materials in a pseudohermitian $(2n+1)$-manifold. In
section $3$, we will derive some crucial results such as the CR Bochner-type
formula. In section $4$, we give the existence theorems of pseudo-Einstein
contact structures. In the final section, we then affirm the CR Frankel
conjecture in a closed, spherical, strictly pseudoconvex CR $(2n+1)$-manifold.

\noindent\textbf{Acknowledgements} Part of the project was done during
visiting to Yau Mathematical Sciences Center, Tsinghua University. The last
three named authors would like to express their thanks for those warm
hospitality there. We also thank Professor Yuya Takeuchi for very useful comments.

\section{Preliminaries}

In this section, we recall some ingredients needed to prove main results in
this paper. We first introduce some basic materials in a pseudohermitian
$(2n+1)$-manifold (see \cite{l}). Let $(M,\xi)$ be a $(2n+1)$-dimensional,
orientable, contact manifold with contact structure $\xi$. A CR structure
compatible with $\xi$ is an endomorphism $J:\xi\rightarrow\xi$ such that
$J^{2}=-1$. We also assume that $J$ satisfies the following integrability
condition: If $X$ and $Y$ are in $\xi$, then so are $[JX,Y]+[X,JY]$ and
$J([JX,Y]+[X,JY])=[JX,JY]-[X,Y]$.

Let $\left\{  T,Z_{\alpha},Z_{\bar{\alpha}}\right\}  $ be a frame of
$TM\otimes\mathbb{C}$, where $Z_{\alpha}$ is any local frame of $T_{1,0}%
,\ Z_{\bar{\alpha}}=\overline{Z_{\alpha}}\in T_{0,1},$ and $T$ is the
characteristic vector field. Then $\left\{  \theta,\theta^{\alpha}%
,\theta^{\bar{\alpha}}\right\}  $, which is the coframe dual to $\left\{
T,Z_{\alpha},Z_{\bar{\alpha}}\right\}  $, satisfies
\begin{equation}
d\theta=ih_{\alpha\overline{\beta}}\theta^{\alpha}\wedge\theta^{\overline
{\beta}} \label{72}%
\end{equation}
for some positive definite hermitian matrix of functions $(h_{\alpha\bar
{\beta}})$. We also call such $M$ a strictly pseudoconvex CR $(2n+1)$%
-manifold. The Levi form $\left\langle \ ,\ \right\rangle _{L_{\theta}}$ is
the Hermitian form on $T_{1,0}$ defined by%
\[
\left\langle Z,W\right\rangle _{L_{\theta}}=-i\left\langle d\theta
,Z\wedge\overline{W}\right\rangle .
\]
We can extend $\left\langle \ ,\ \right\rangle _{L_{\theta}}$ to $T_{0,1}$ by
defining $\left\langle \overline{Z},\overline{W}\right\rangle _{L_{\theta}%
}=\overline{\left\langle Z,W\right\rangle }_{L_{\theta}}$ for all $Z,W\in
T_{1,0}$. The Levi form naturally induces a Hermitian form on the dual bundle
of $T_{1,0}$, denoted by $\left\langle \ ,\ \right\rangle _{L_{\theta}^{\ast}%
}$, and hence on all the induced tensor bundles. Integrating the Hermitian
form (when acting on sections) over $M$ with respect to the volume form
$d\mu=\theta\wedge(d\theta)^{n}$, we get an inner product on the space of
sections of each tensor bundle.

The pseudohermitian connection of $(J,\theta)$ is the connection $\nabla$ on
$TM\otimes\mathbb{C}$ (and extended to tensors) given in terms of a local
frame $Z_{\alpha}\in T_{1,0}$ by%

\[
\nabla Z_{\alpha}=\omega_{\alpha}{}^{\beta}\otimes Z_{\beta},\quad\nabla
Z_{\bar{\alpha}}=\omega_{\bar{\alpha}}{}^{\bar{\beta}}\otimes Z_{\bar{\beta}%
},\quad\nabla T=0,
\]
where $\omega_{\alpha}{}^{\beta}$ are the $1$-forms uniquely determined by the
following equations:%

\[%
\begin{split}
d\theta^{\beta}  &  =\theta^{\alpha}\wedge\omega_{\alpha}{}^{\beta}%
+\theta\wedge\tau^{\beta},\\
0  &  =\tau_{\alpha}\wedge\theta^{\alpha},\\
0  &  =\omega_{\alpha}{}^{\beta}+\omega_{\bar{\beta}}{}^{\bar{\alpha}}.
\end{split}
\]
We can write (by the Cartan lemma) $\tau_{\alpha}=A_{\alpha\gamma}%
\theta^{\gamma}$ with $A_{\alpha\gamma}=A_{\gamma\alpha}$. The curvature of
Tanaka-Webster connection, expressed in terms of the coframe $\{\theta
=\theta^{0},\theta^{\alpha},\theta^{\bar{\alpha}}\}$, is
\[%
\begin{split}
\Pi_{\beta}{}^{\alpha}  &  =\overline{\Pi_{\bar{\beta}}{}^{\bar{\alpha}}%
}=d\omega_{\beta}{}^{\alpha}-\omega_{\beta}{}^{\gamma}\wedge\omega_{\gamma}%
{}^{\alpha},\\
\Pi_{0}{}^{\alpha}  &  =\Pi_{\alpha}{}^{0}=\Pi_{0}{}^{\bar{\beta}}=\Pi
_{\bar{\beta}}{}^{0}=\Pi_{0}{}^{0}=0.
\end{split}
\]
Webster showed that $\Pi_{\beta}{}^{\alpha}$ can be written
\[
\Pi_{\beta}{}^{\alpha}=R_{\beta}{}^{\alpha}{}_{\rho\bar{\sigma}}\theta^{\rho
}\wedge\theta^{\bar{\sigma}}+W_{\beta}{}^{\alpha}{}_{\rho}\theta^{\rho}%
\wedge\theta-W^{\alpha}{}_{\beta\bar{\rho}}\theta^{\bar{\rho}}\wedge
\theta+i\theta_{\beta}\wedge\tau^{\alpha}-i\tau_{\beta}\wedge\theta^{\alpha}%
\]
where the coefficients satisfy
\[
R_{\beta\bar{\alpha}\rho\bar{\sigma}}=\overline{R_{\alpha\bar{\beta}\sigma
\bar{\rho}}}=R_{\bar{\alpha}\beta\bar{\sigma}\rho}=R_{\rho\bar{\alpha}%
\beta\bar{\sigma}},\ \ \ W_{\beta\bar{\alpha}\gamma}=W_{\gamma\bar{\alpha
}\beta}.
\]
Here $R_{\gamma}{}^{\delta}{}_{\alpha\bar{\beta}}$ is the pseudohermitian
curvature tensor, $R_{\alpha\bar{\beta}}=R_{\gamma}{}^{\gamma}{}_{\alpha
\bar{\beta}}$ is the pseudohermitian Ricci curvature tensor and $A_{\alpha
\beta}$\ is the pseudohermitian torsion tensor. Furthermore, we denote \
\[
Tor(X,Y):=h^{\alpha\bar{\beta}}T_{\alpha\overline{\beta}}(X,Y)=i(A_{\overline
{\alpha}\bar{\rho}}X^{\overline{\rho}}Y^{\overline{\alpha}}-A_{\alpha\rho
}X^{\rho}Y^{\alpha})
\]
for any $X=X^{\alpha}Z_{\alpha},\ Y=Y^{\alpha}Z_{\alpha}$ in $T_{1,0}.$ We
will denote components of covariant derivatives with indices preceded by
comma; thus write $A_{\alpha\beta,\gamma}$. The indices $\{0,\alpha
,\bar{\alpha}\}$ indicate derivatives with respect to $\{T,Z_{\alpha}%
,Z_{\bar{\alpha}}\}$. For derivatives of a scalar function, we will often omit
the comma, for instance, $u_{\alpha}=Z_{\alpha}u,\ u_{\alpha\bar{\beta}%
}=Z_{\bar{\beta}}Z_{\alpha}u-\omega_{\alpha}{}^{\gamma}(Z_{\bar{\beta}%
})Z_{\gamma}u.$ For a smooth real-valued function $u$, the subgradient
$\nabla_{b}$ is defined by $\nabla_{b}u\in\xi$ and $\left\langle Z,\nabla
_{b}u\right\rangle _{L_{\theta}}=du(Z)$ for all vector fields $Z$ tangent to
the contact plane. Locally, we denote $\nabla_{b}u=\sum_{\alpha}u_{\bar
{\alpha}}Z_{\alpha}+u_{\alpha}Z_{\bar{\alpha}}$. We also denote $u_{0}=Tu$. We
can use the connection to define the subhessian as the complex linear map
$(\nabla^{H})^{2}u:T_{1,0}\oplus T_{0,1}\rightarrow T_{1,0}\oplus T_{0,1}$ by
\[
(\nabla^{H})^{2}u(Z)=\nabla_{Z}\nabla_{b}u.
\]
In particular,%

\[%
\begin{array}
[c]{c}%
|\nabla_{b}u|^{2}=2\sum_{\alpha}u_{\alpha}u_{\overline{\alpha}},\quad
|\nabla_{b}^{2}u|^{2}=2\sum_{\alpha,\beta}(u_{\alpha\beta}u_{\overline{\alpha
}\overline{\beta}}+u_{\alpha\overline{\beta}}u_{\overline{\alpha}\beta}).
\end{array}
\]
Also
\[%
\begin{array}
[c]{c}%
\Delta_{b}u=Tr\left(  (\nabla^{H})^{2}u\right)  =\sum_{\alpha}(u_{\alpha
\bar{\alpha}}+u_{\bar{\alpha}\alpha}).
\end{array}
\]

\begin{definition}
\label{d41} (\cite{l}, \cite{cj}) Let $(M,\theta)$ be a closed strictly
pseudoconvex CR $(2n+1)$-manifold with $n\geq2.$

(i) We define the first Chern class $c_{1}(T_{1,0}M)\in H^{2}(M,\mathbf{R})$
for the holomorphic tangent bundle $T^{1,0}M$ by
\begin{align}
c_{1}(T^{1,0}M)  &  =\frac{i}{2\pi}[d\omega_{\alpha}{}^{\alpha}]
\label{202020}\\
&  =\frac{i}{2\pi}[R_{\alpha\overline{\beta}}\theta^{\alpha}\wedge
\theta^{\overline{\beta}}+A_{\alpha\mu,\overline{\alpha}}{}\theta^{\mu}%
\wedge\theta-A_{\overline{\alpha}\overline{\mu},\alpha}\theta^{\overline{\mu}%
}\wedge\theta].\nonumber
\end{align}

(ii) We call a CR structure $J$ spherical if the Chern curvature tensor
\begin{equation}%
\begin{array}
[c]{ccl}%
C_{\beta\overline{\alpha}\lambda\overline{\sigma}} & = & R_{\beta
\overline{\alpha}\lambda\overline{\sigma}}-\frac{1}{n+2}[R_{\beta
\overline{\alpha}}h_{\lambda\overline{\sigma}}+R_{\lambda\overline{\alpha}%
}h_{\beta\overline{\sigma}}+\delta_{\beta}^{\alpha}R_{\lambda\overline{\sigma
}}+\delta_{\lambda}^{\alpha}R_{\beta\overline{\sigma}}]\\
&  & +\frac{R}{(n+1)(n+2)}[\delta_{\beta}^{\alpha}h_{\lambda\overline{\sigma}%
}+\delta_{\lambda}^{\alpha}h_{\beta\overline{\sigma}}]
\end{array}
\label{d12}%
\end{equation}
vanishes identically.
\end{definition}

\begin{remark}
1. Note that $C_{\alpha\overline{\alpha}\lambda\overline{\sigma}}=0.$ Hence
$C_{\beta\overline{\alpha}\lambda\overline{\sigma}}$ is always vanishing for
$n=1.$

2. We observe that the spherical structure is CR invariant and a closed
spherical CR $(2n+1)$-manifold $(M,J)$ is locally CR equivalent to
$(\mathbf{S}^{2n+1},\widehat{J}).$

3. (\cite{kt}) In general, a\ spherical CR structure on a $(2n+1)$-manifold is
a system of coordinate charts into $S^{2n+1}$\ such that the overlap functions
are restrictions of elements of\textrm{\ }$PU(n+1,1)$. Here $PU(n+1,1)$ is the
group of complex projective automorphisms of the unit ball in $\mathbf{C}%
^{n+1}$ and the holomorphic isometry group of the complex hyperbolic space
$\mathbf{CH}^{n}$.
\end{remark}

\begin{definition}
(i) Let $(M,\xi,\theta)$ be a closed pseudohermitian $(2n+1)$-manifold.
Define
\[%
\begin{array}
[c]{c}%
P\varphi=\sum_{\alpha=1}^{n}(\varphi_{\overline{\alpha}}{}^{\overline{\alpha}%
}{}_{\beta}+inA_{\beta\alpha}\varphi^{\alpha})\theta^{\beta}=(P_{\beta}%
\varphi)\theta^{\beta},\text{ }\beta=1,2,\cdot\cdot\cdot,n
\end{array}
\]
which is an operator that characterizes CR-pluriharmonic functions (\cite{l}
for $n=1$ and \cite{gl} for $n\geq2$). Here $P_{\beta}\varphi=\sum_{\alpha
=1}^{n}(\varphi_{\overline{\alpha}}{}^{\overline{\alpha}}{}_{\beta}%
+inA_{\beta\alpha}\varphi^{\alpha})$ and $\overline{P}\varphi=(\overline
{P}_{\beta}\varphi)\theta^{\overline{\beta}}$, the conjugate of $P$. Moreover,
we define
\begin{equation}
P_{0}\varphi=\delta_{b}(P\varphi)+\overline{\delta}_{b}(\overline{P}\varphi)
\label{ABC3}%
\end{equation}
which is the so-called CR Paneitz operator $P_{0}.$ Here $\delta_{b}$ is the
divergence operator that takes $(1,0)$-forms to functions by $\delta
_{b}(\sigma_{\alpha}\theta^{\alpha})=\sigma_{\alpha},^{\alpha}.$ Hence $P_{0}$
is a real and symmetric operator and
\[%
\begin{array}
[c]{c}%
\int_{M}\langle P\varphi+\overline{P}\varphi,d_{b}\varphi\rangle_{L_{\theta
}^{\ast}}d\mu=-\int_{M}\left(  P_{0}\varphi\right)  \varphi d\mu.
\end{array}
\]

(ii) We call the Paneitz operator $P_{0}$ with respect to $(J,\theta)$
essentially positive if there exists a constant $\Lambda$ $>$ $0$ such that
\begin{equation}
\int_{M}P_{0}\varphi\cdot\varphi d\mu\geq\Lambda\int_{M}\varphi^{2}d\mu.
\label{41}%
\end{equation}
for all real smooth functions $\varphi$ $\in(\ker P_{0})^{\perp}$ (i.e.
perpendicular to the kernel of $P_{0}$ in the $L^{2}$ norm with respect to the
volume form $d\mu$ $=$ $\theta\wedge d\theta).$ We say that $P_{0}$ is
nonnegative if
\[
\int_{M}P_{0}\varphi\cdot\varphi d\mu\geq0
\]
for all real smooth functions $\varphi$.
\end{definition}

\begin{remark}
\label{r1} 1. The space of kernel of the CR Paneitz operator $P_{0}$ is
infinite dimensional, containing all $CR$ -pluriharmonic functions. However,
for a closed pseudohermitian $(2n+1)$-manifold $(M,\xi,\theta)$ with $n\geq2$,
it was shown (\cite{gl}) that
\begin{equation}
\ker P_{\beta}=\ker P_{0}. \label{4aa}%
\end{equation}
2. (\cite{gl}, \cite{cc}) The CR Paneitz $P_{0}$ is always nonnegative for a
closed pseudohermitian $(2n+1)$-manifold $(M,\xi,\theta)$ with $n\geq2$.

3. (\cite{l}) A real-valued smooth function $u$ is said to be CR-pluriharmonic
if, for any point $x\in M$, there is a real-valued smooth function $v$ such
that
\begin{equation}
\overline{\partial}_{b}(u+iv)=0. \label{20a}%
\end{equation}

\end{remark}

\section{The Bochner-Type Formulae}

In this section, we first derive some essential lemmas. Recall that the
transformation law of the connection under a change of pseudohermitian
structure was computed in \cite[Sec. 5]{l2}. Let $\hat{\theta}=e^{2f}\theta$
be another pseudohermitian structure. Then we can define an admissible coframe
by $\hat{\theta}^{\alpha}=e^{f}(\theta^{\alpha}+2if^{\alpha}\theta)$. With
respect to this local coframe, the connection $1$-form and the pseudohermitian
torsion are given by%
\begin{equation}%
\begin{split}
\widehat{{\omega}}{_{\beta}}^{\alpha}  &  ={\omega_{\beta}}^{\alpha
}+2(f_{\beta}\theta^{\alpha}-f^{\alpha}\theta_{\beta})+\delta_{\beta}^{\alpha
}(f_{\gamma}\theta^{\gamma}-f^{\gamma}\theta_{\gamma})\\
&  \phantom{=}+i(f^{\alpha}{}_{\beta}+f_{\beta}{}^{\alpha}+4\delta_{\beta
}^{\alpha}f_{\gamma}f^{\gamma})\theta,
\end{split}
\label{30}%
\end{equation}
and
\begin{equation}
\widehat{{A}}{_{\alpha\beta}=}e^{-2f}({A_{\alpha\beta}+2i}f_{\alpha\beta
}-4if_{\alpha}f_{\beta}), \label{31}%
\end{equation}
respectively. Thus the Webster curvature transforms as
\begin{equation}
\widehat{R}=e^{-2f}(R-2(n+1)\Delta_{b}f-4n(n+1)f_{\gamma}f^{\gamma}).
\label{32}%
\end{equation}
Here covariant derivatives on the right side are taken with respect to the
pseudohermitian structure $\theta$ and an admissible coframe $\theta^{\alpha}%
$. Note also that the dual frame of $\{\hat{\theta},\hat{\theta}^{\alpha}%
,\hat{\theta}^{\overline{\alpha}}\}$ is given by $\{\widehat{T},\widehat
{Z}_{\alpha},\widehat{Z}_{\overline{\alpha}}\}$, where%

\[
\widehat{T}=e^{-2f}(T+2if^{\overline{\gamma}}Z_{\overline{\gamma}}%
-2if^{\gamma}Z_{\gamma}),\text{ \ }\widehat{Z}_{\alpha}=e^{-f}Z_{\alpha}.
\]

Now we derive the following transformation property for the CR-pluriharmonic
operator and CR Paneitz operator.

\begin{lemma}
\label{l21} Let $\theta$ and $\widehat{\theta}$ be contact forms in a
$(2n+1)$-dimensional pseudohermitian manifold $(M,\xi)$. If $\ \widehat
{\theta}=e^{2f}\theta,$ then we have%
\begin{equation}%
\begin{array}
[c]{l}%
\widehat{R}_{\alpha}-in\widehat{A}_{\alpha\beta},^{\beta}=e^{-3f}[R_{\alpha
}-inA_{\alpha\beta},^{\beta}-2(n+2)P_{\alpha}f]\\
\ \ \ \ \ \ \ \ \ \ \ \ \ \ \ \ \ \ \ \ \ \ \ \ \ \ \ +2ne^{-2f}(\widehat
{R}_{\alpha\overline{\beta}}-\frac{\widehat{R}}{n}\widehat{h}_{\alpha
\overline{\beta}})f^{\overline{\beta}}.
\end{array}
\label{4}%
\end{equation}

\end{lemma}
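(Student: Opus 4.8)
The plan is to prove (\ref{4}) by a direct computation starting from the conformal transformation laws (\ref{30})--(\ref{32}), together with the transformation law of the pseudohermitian Ricci tensor under $\widehat\theta=e^{2f}\theta$ from \cite[Sec.~5]{l2}. I would compute $\widehat R_\alpha$ and $in\widehat A_{\alpha\beta}{}^{,\beta}$ separately as expressions in the $\theta$-covariant derivatives of $f$ and the $\theta$-curvature, then recombine and identify with the right-hand side. Since $\widehat R$ is a scalar and $\widehat Z_\alpha=e^{-f}Z_\alpha$, one has $\widehat R_\alpha=e^{-f}Z_\alpha\widehat R$; substituting (\ref{32}) and carrying $Z_\alpha$ through gives $e^{-3f}$ times a combination of $R_\alpha$, $(\Delta_b f)_{,\alpha}$, $(f_\gamma f^\gamma)_{,\alpha}$, and $f_\alpha$ times the bracket of (\ref{32}). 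Using $f_{\overline\gamma}{}^{\overline\gamma}=\tfrac12(\Delta_b f-inf_0)$ one rewrites $(\Delta_b f)_{,\alpha}$ as $2f_{\overline\gamma}{}^{\overline\gamma}{}_{,\alpha}+inf_{0\alpha}=2\bigl(P_\alpha f-inA_{\alpha\gamma}f^\gamma\bigr)+inf_{0\alpha}$, and $(f_\gamma f^\gamma)_{,\alpha}=f^\gamma f_{\alpha\gamma}+f^{\overline\gamma}f_{\overline\gamma\alpha}$; this already exhibits $P_\alpha f$ with coefficient $-4(n+1)$, plus $R_\alpha$, torsion terms $A_{\alpha\gamma}f^\gamma$, $f_{0\alpha}$, and products of first with second derivatives of $f$.

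Next I would compute $\widehat A_{\alpha\beta}{}^{,\beta}=\widehat h^{\beta\overline\gamma}\widehat\nabla_{\overline\gamma}\widehat A_{\alpha\beta}$, using that in the adapted coframe $\widehat h_{\alpha\overline\beta}=h_{\alpha\overline\beta}$ and $\widehat A_{\alpha\beta}$ is given by (\ref{31}). Expanding $\widehat\nabla_{\overline\gamma}\widehat A_{\alpha\beta}=\widehat Z_{\overline\gamma}\widehat A_{\alpha\beta}-\widehat\omega_\alpha{}^\delta(\widehat Z_{\overline\gamma})\widehat A_{\delta\beta}-\widehat\omega_\beta{}^\delta(\widehat Z_{\overline\gamma})\widehat A_{\alpha\delta}$ and evaluating (\ref{30}) on $\widehat Z_{\overline\gamma}=e^{-f}Z_{\overline\gamma}$ (the $\theta^\delta$, $\theta^\mu$ and $\theta$ terms of (\ref{30}) vanish, leaving $\widehat\omega_\alpha{}^\delta(\widehat Z_{\overline\gamma})=e^{-f}[\omega_\alpha{}^\delta(Z_{\overline\gamma})-2f^\delta h_{\alpha\overline\gamma}-\delta^\delta_\alpha f_{\overline\gamma}]$), then differentiating $e^{-2f}(A_{\alpha\beta}+2if_{\alpha\beta}-4if_\alpha f_\beta)$ and contracting with $h^{\beta\overline\gamma}$, yields $e^{-3f}$ times $A_{\alpha\beta}{}^{,\beta}$, $f_{\alpha\beta}{}^{,\beta}$, $A_{\alpha\beta}f^\beta$, $f_{\alpha\beta}f^\beta$, and $f^\beta$-contractions of $\nabla f$. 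The decisive step is to reorder $f_{\alpha\beta}{}^{,\beta}=f_{\beta\alpha}{}^{,\beta}$ (Hessian symmetry $f_{\alpha\beta}=f_{\beta\alpha}$) into the canonical form $f_{\overline\gamma}{}^{\overline\gamma}{}_{,\alpha}$ built into $P_\alpha$: this commutation of covariant derivatives is exactly what brings the Ricci contraction $R_{\alpha\overline\beta}f^{\overline\beta}$, a scalar term $Rf_\alpha$, and further torsion terms into the identity, while $f_{\alpha\overline\beta}-f_{\overline\beta\alpha}=ih_{\alpha\overline\beta}f_0$ absorbs the $f_0$-contributions.

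Adding $\widehat R_\alpha-in\widehat A_{\alpha\beta}{}^{,\beta}$, the factor $-in\cdot 2i=2n$ from the $2if_{\alpha\beta}$ term supplies the missing $P_\alpha f$ coefficient so that the total is $-2(n+2)$, the surviving torsion contributions regroup into $-inA_{\alpha\beta}{}^{,\beta}$ together with the $inA_{\alpha\gamma}f^\gamma$ needed to complete $-2(n+2)P_\alpha f$, and the remainder consists of $R_{\alpha\overline\beta}f^{\overline\beta}$, $Rf_\alpha$, and products of $\nabla f$ with $\nabla^2 f$ (and their traces). The first two pieces assemble precisely into $e^{-3f}[R_\alpha-inA_{\alpha\beta}{}^{,\beta}-2(n+2)P_\alpha f]$; it then remains to check that the remainder is the second term on the right of (\ref{4}), which one does by expanding $2n(\widehat R_{\alpha\overline\beta}-\tfrac{\widehat R}{n}\widehat h_{\alpha\overline\beta})f^{\overline\beta}$ via the Ricci transformation law of \cite{l2} and (\ref{32}) and matching term by term.

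The whole difficulty lies in the bookkeeping: one must fix the canonical ordering $f_{\overline\gamma}{}^{\overline\gamma}{}_{,\alpha}$ and track \emph{every} curvature- and torsion-valued correction created by re-ordering derivatives, so that the coefficients $2(n+2)$ in front of $P_\alpha f$ and $2n$ in front of the trace-free Ricci term come out exactly, and one must keep the conformal factors and the raising/lowering conventions consistent throughout. A useful consistency check is the pseudo-Einstein case: when $W_\alpha=0$ and $R_{\alpha\overline\beta}=\tfrac Rn h_{\alpha\overline\beta}$ already hold for $\theta$, (\ref{4}) must collapse to $\widehat R_\alpha-in\widehat A_{\alpha\beta}{}^{,\beta}=-2(n+2)e^{-3f}P_\alpha f$, recovering the classical fact that $\widehat\theta=e^{2f}\theta$ is pseudo-Einstein exactly when $f$ is CR-pluriharmonic; conceptually, the appearance of $W_\alpha$ and of the trace-free Ricci reflects the transformation $\widehat\omega_\alpha{}^\alpha-\omega_\alpha{}^\alpha=(n+2)(\overline\partial_b-\partial_b)f+i(\,\cdot\,)\theta$ of the trace of the connection form and the exactness of $\widehat\Theta-\Theta$, which also offers an alternative, more structural route to the torsion/curvature bookkeeping.
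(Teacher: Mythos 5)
Your plan is essentially the paper's own proof: a direct computation of $\widehat{R}_{\alpha}$ and $\widehat{A}_{\alpha\beta},^{\beta}$ from the transformation laws (\ref{30})--(\ref{32}), followed by Lee's commutation relations (including $f_{\alpha\overline{\beta}}-f_{\overline{\beta}\alpha}=ih_{\alpha\overline{\beta}}f_{0}$) to reorganize the third-order terms into $f_{\overline{\beta}}{}^{\overline{\beta}}{}_{\alpha}$, and the trace-free Ricci transformation law (\ref{5}) to identify the remainder with $2ne^{-2f}(\widehat{R}_{\alpha\overline{\beta}}-\frac{\widehat{R}}{n}\widehat{h}_{\alpha\overline{\beta}})f^{\overline{\beta}}$. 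Apart from minor bookkeeping choices (e.g.\ where the $Rf_{\alpha}$-type terms first appear), this coincides with the argument in the paper, so the proposal is correct and takes the same route.
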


\begin{proof}
By the contracted Bianchi identity, we have
\[%
\begin{array}
[c]{c}%
\frac{n-1}{n}(R_{\alpha}-inA_{\alpha\beta},^{\beta})=(R_{\alpha\overline
{\beta}}-\frac{R}{n}h_{\alpha\overline{\beta}}),^{\overline{\beta}}.
\end{array}
\]
Also, by \cite[P 172]{l2}
\begin{equation}%
\begin{array}
[c]{c}%
(R_{\alpha\overline{\beta}}-\frac{R}{n}h_{\alpha\overline{\beta}%
})-2(n+2)(f_{\alpha\overline{\beta}}-\frac{1}{n}f_{\gamma}{}^{\gamma}%
h_{\alpha\overline{\beta}})=\widehat{R}_{\alpha\overline{\beta}}%
-\frac{\widehat{R}}{n}\widehat{h}_{\alpha\overline{\beta}}.
\end{array}
\label{5}%
\end{equation}
Following the same computation as the proof of Lemma 5.4 in \cite{h}, by using
\eqref{30}, \eqref{31}, and \eqref{32}, we compute%
\begin{align*}
\widehat{R}_{\alpha}  &  =\widehat{Z}_{\alpha}\widehat{R}=e^{-f}Z_{\alpha
}e^{-2f}(R-2(n+1)\Delta_{b}f-2n(n+1)|\nabla_{b}f|^{2})\\
&  =e^{-3f}[R_{\alpha}-2Wf_{\alpha}+4(n+1)(\Delta_{b}f+n|\nabla_{b}%
f|^{2})f_{\alpha}\\
&  \text{ \ \ \ \ }-2(n+1)(f_{\gamma}{}^{\gamma}{}_{\alpha}+f_{\overline
{\gamma}}{}^{\overline{\gamma}}{}_{\alpha})-4n(n+1)(f_{\gamma\alpha}f^{\gamma
}+f_{\gamma}f^{\gamma}{}_{\alpha})],\\
i\widehat{A}_{\alpha\beta},_{\overline{\gamma}}  &  =i(\widehat{Z}%
_{\overline{\gamma}}\widehat{A}_{\alpha\beta}-\widehat{{\omega}}{_{\alpha}%
}^{l}(\widehat{Z}_{\overline{\gamma}})\widehat{A}_{\beta l}-\widehat{{\omega}%
}{_{\beta}}^{l}(\widehat{Z}_{\overline{\gamma}})\widehat{A}_{\alpha l})\\
&  =ie^{-f}[(Z_{\overline{\gamma}}+2f_{\overline{\gamma}})\widehat{A}%
_{\alpha\beta}+2(\delta_{\alpha\gamma}\widehat{A}_{\beta l}+\delta
_{\beta\gamma}\widehat{A}_{\alpha l})f^{l}]\\
&  =ie^{-f}(Z_{\overline{\gamma}}+2f_{\overline{\gamma}})e^{-2f}%
({A_{\alpha\beta}+2i}f_{\alpha\beta}-4if_{\alpha}f_{\beta})\\
&  +2e^{-3f}[\delta_{\beta\gamma}(i{A_{\alpha l}-2}f_{\alpha l}+4f_{\alpha
}f_{l})+\delta_{\alpha\gamma}(i{A_{\beta l}-2}f_{\beta l}+4f_{\beta}%
f_{l})]f^{l}\\
&  =e^{-3f}[i{A_{\alpha\beta,\overline{\gamma}}-2f_{\alpha\beta\overline
{\gamma}}+4(f_{\alpha\overline{\gamma}}f_{\beta}+f_{\alpha}f_{\beta
\overline{\gamma}})}]\\
&  +2e^{-3f}[\delta_{\beta\gamma}(i{A_{\alpha l}-2}f_{\alpha l}+4f_{\alpha
}f_{l})+\delta_{\alpha\gamma}(i{A_{\beta l}-2}f_{\beta l}+4f_{\beta}%
f_{l})]f^{l}.
\end{align*}
Contracting the second equation with respect to the Levi metric $\widehat
{h}_{\gamma\overline{\beta}}=h_{\gamma\overline{\beta}}$ yields%
\[%
\begin{array}
[c]{ccc}%
i\widehat{A}_{\alpha\beta},^{\beta} & = & e^{-3f}[i{A_{\alpha\beta},}^{\beta
}-2f_{\alpha\beta}{}^{\beta}+4(f_{\alpha}{}^{\beta}f_{\beta}+f_{\alpha
}f_{\beta}{}^{\beta})\\
&  & \text{ \ \ \ \ \ }{+2(n+1)(iA_{\alpha\beta}-}2{f_{\alpha\beta}%
+4}f_{\alpha}f_{\beta})f^{\beta}].
\end{array}
\]
Thus%
\[%
\begin{array}
[c]{lll}%
\widehat{R}_{\alpha}-in\widehat{A}_{\alpha\beta},^{\beta} & = & e^{-3f}%
[R_{\alpha}-in{A_{\alpha\beta},}^{\beta}-2(n+1)(f_{\beta}{}^{\beta}{}_{\alpha
}+f_{\overline{\beta}}{}^{\overline{\beta}}{}_{\alpha})+2nf_{\alpha\beta}%
{}^{\beta}\\
&  & -2Rf_{\alpha}-2n(n+1)i{A_{\alpha\beta}}f^{\beta}+4(n+1)(f_{\beta}%
{}^{\beta}+f_{\overline{\beta}}{}^{\overline{\beta}})f_{\alpha}\\
&  & -4n(n+1)f^{\beta}{}_{\alpha}f_{\beta}-4n(f_{\alpha}{}^{\beta}f_{\beta
}+f_{\beta}{}^{\beta}f_{\alpha})].
\end{array}
\]
By using the commutation relations (\cite[Lemma 2.3]{l2})
\[
-2(n+1)f_{\beta}{}^{\beta}{}_{\alpha}+2nf_{\alpha\beta}{}^{\beta
}=-2f_{\overline{\beta}}{}^{\overline{\beta}}{}_{\alpha}+2nR_{\alpha
\overline{\beta}}f^{\overline{\beta}}-2in{A_{\alpha\beta}}f^{\beta},
\]
and%
\[%
\begin{array}
[c]{c}%
{f_{\alpha\overline{\beta}}-f_{\overline{\beta}\alpha}=ih_{\alpha
\overline{\beta}}f}_{0},
\end{array}
\]
and by (\ref{5})
\[%
\begin{array}
[c]{c}%
\lbrack(R_{\alpha\overline{\beta}}-\frac{R}{n}h_{\alpha\overline{\beta}%
})-2(n+2)(f_{\alpha\overline{\beta}}-\frac{1}{n}f_{\gamma}{}^{\gamma
}{h_{\alpha\overline{\beta}}})]f^{\overline{\beta}}=e^{f}(\widehat{R}%
_{\alpha\overline{\beta}}-\frac{\widehat{R}}{n}\widehat{h}_{\alpha
\overline{\beta}})f^{\overline{\beta}},
\end{array}
\]
we obtain the following transformation law%
\[%
\begin{array}
[c]{l}%
\widehat{R}_{\alpha}-in\widehat{A}_{\alpha\beta},^{\beta}-2ne^{-2f}%
(\widehat{R}_{\alpha\overline{\beta}}-\frac{\widehat{R}}{n}\widehat{h}%
_{\alpha\overline{\beta}})f^{\overline{\beta}}\\
=e^{-3f}[R_{\alpha}-in{A_{\alpha\beta},}^{\beta}-2(n+2)(f_{\overline{\beta}}%
{}^{\overline{\beta}}{}_{\alpha}+in{A_{\alpha\beta}}f^{\beta})]\\
=e^{-3f}[R_{\alpha}-inA_{\alpha\beta},^{\beta}-2(n+2)P_{\alpha}f].
\end{array}
\]

Then (\ref{4}) follows easily.
\end{proof}

\begin{lemma}
\label{l31} (\cite{l}) Let $(M,J,\theta)$ be a closed, strictly pseudoconvex
CR $(2n+1)$-manifold of $c_{1}(T_{1,0}M)=0$ for $n\geq2$. Then there is a pure
imaginary $1$-form
\[
\sigma=\sigma_{\overline{\alpha}}\theta^{\overline{\alpha}}-\sigma_{\alpha
}\theta^{\alpha}+i\sigma_{0}\theta
\]
with $d\omega_{\alpha}^{\alpha}=d\sigma$ such that
\begin{equation}
\sigma_{\overline{\beta},\overline{\alpha}}=\sigma_{\overline{\alpha
},\overline{\beta}} \label{16a}%
\end{equation}
and
\begin{equation}
\left\{
\begin{array}
[c]{l}%
R_{\alpha\overline{\beta}}=\sigma_{\overline{\beta},\alpha}+\sigma
_{\alpha,\overline{\beta}}-\sigma_{0}h_{\alpha\overline{\beta}},\\
A_{\alpha\beta,}{}^{\beta}=\sigma_{\alpha,0}+i\sigma_{0,\alpha}-A_{\alpha
\beta}\sigma^{\beta}.
\end{array}
\right.  \label{16}%
\end{equation}

\end{lemma}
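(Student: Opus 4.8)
The plan is to establish the claimed identities directly from the curvature structure equations and the hypothesis $c_1(T_{1,0}M)=0$, following the template of Lee's computation but keeping track of the full torsion contribution. The starting point is that $c_1(T_{1,0}M)=0$ forces the closed purely imaginary two-form $d\omega_\alpha{}^\alpha = \Theta$ to be exact, so there is a purely imaginary $1$-form $\sigma = \sigma_{\overline\alpha}\theta^{\overline\alpha} - \sigma_\alpha\theta^\alpha + i\sigma_0\theta$ with $d\sigma = \Theta = d\omega_\alpha{}^\alpha$. I would first write out $d\sigma$ in the coframe $\{\theta,\theta^\alpha,\theta^{\overline\alpha}\}$ using $d\theta = ih_{\alpha\overline\beta}\theta^\alpha\wedge\theta^{\overline\beta}$, the structure equation $d\theta^\beta = \theta^\alpha\wedge\omega_\alpha{}^\beta + \theta\wedge\tau^\beta$, and the covariant-derivative conventions for $\sigma_\alpha$; collect the result into its $\theta^\alpha\wedge\theta^{\overline\beta}$, $\theta^\alpha\wedge\theta$, $\theta^{\overline\alpha}\wedge\theta$, and $\theta^\alpha\wedge\theta^\beta$ (and conjugate) components.

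Next I would match this component-by-component against the Webster expression for $\Theta = d\omega_\alpha{}^\alpha$, namely $\Theta = R_{\alpha\overline\beta}\theta^\alpha\wedge\theta^{\overline\beta} + A_{\mu\alpha,\overline\alpha}\theta^\mu\wedge\theta - A_{\overline\mu\overline\alpha,\alpha}\theta^{\overline\mu}\wedge\theta$ (this is the trace over $\beta=\alpha$ of the full curvature form $\Pi_\beta{}^\alpha$ displayed in Section 2, whose $\theta^\alpha\wedge\theta^\beta$ component vanishes after tracing). The $\theta^\alpha\wedge\theta^\beta$ part of $d\sigma$ must therefore vanish, which after antisymmetrization gives exactly the symmetry $\sigma_{\overline\beta,\overline\alpha} = \sigma_{\overline\alpha,\overline\beta}$ in \eqref{16a} (equivalently its conjugate $\sigma_{\beta,\alpha}=\sigma_{\alpha,\beta}$). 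Matching the $\theta^\alpha\wedge\theta^{\overline\beta}$ coefficients produces $R_{\alpha\overline\beta} = \sigma_{\overline\beta,\alpha} + \sigma_{\alpha,\overline\beta} - \sigma_0 h_{\alpha\overline\beta}$, the first line of \eqref{16}, where the $-\sigma_0 h_{\alpha\overline\beta}$ term arises precisely because the $i\sigma_0\theta$ piece of $\sigma$ contributes through $i\sigma_0\,d\theta = -\sigma_0 h_{\alpha\overline\beta}\theta^\alpha\wedge\theta^{\overline\beta}$. Matching the $\theta^\alpha\wedge\theta$ coefficients gives the second line, $A_{\alpha\beta,}{}^\beta = \sigma_{\alpha,0} + i\sigma_{0,\alpha} - A_{\alpha\beta}\sigma^\beta$, where the torsion term $\tau^\beta = A^\beta{}_\gamma\theta^\gamma$ entering $d\theta^\beta$ feeds the $-A_{\alpha\beta}\sigma^\beta$ correction and the $T$-derivative $\sigma_{\alpha,0}$ together with $i\sigma_{0,\alpha}$ come from differentiating the $i\sigma_0\theta$ component.

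**The main obstacle** is bookkeeping rather than conceptual: one must be scrupulous about the covariant-derivative conventions (the indices $0,\alpha,\overline\alpha$, the connection-form corrections hidden in $\sigma_{\alpha,\overline\beta}$ versus $Z_{\overline\beta}\sigma_\alpha$, and the antisymmetry $\omega_\alpha{}^\beta + \omega_{\overline\beta}{}^{\overline\alpha}=0$), and one must correctly expand $\tau^\beta$ and the reality constraint $\overline\sigma = -\sigma$ relating the $(1,0)$ and $(0,1)$ components. A secondary point is to confirm that the cross terms $\theta\wedge\theta^\alpha$ coming from $d(\sigma_0\theta)$ and from $\theta\wedge\tau^\beta$ consolidate with the pure $\theta^\alpha\wedge\theta$ terms without leaving an unpaired $\theta\wedge$ contribution — this is automatic once one uses $A_{\alpha\beta}=A_{\beta\alpha}$ and the fact that $\sigma$ is purely imaginary so the $i\sigma_0\theta$ term is consistent with $d\sigma$ being purely imaginary. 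Once the component equations are assembled, \eqref{16a} and \eqref{16} follow by reading off coefficients, with no further estimates needed.
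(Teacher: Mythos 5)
Your proposal is correct: expanding $d\sigma$ in the coframe (using $d\theta=ih_{\alpha\overline{\beta}}\theta^{\alpha}\wedge\theta^{\overline{\beta}}$, the structure equation for $d\theta^{\alpha}$, and the symmetry $A_{\alpha\beta}=A_{\beta\alpha}$ to kill the traced torsion terms in $\Pi_{\alpha}{}^{\alpha}$) and matching the $\theta^{\overline{\alpha}}\wedge\theta^{\overline{\beta}}$, $\theta^{\alpha}\wedge\theta^{\overline{\beta}}$, and $\theta^{\alpha}\wedge\theta$ components of $d\sigma=\Theta$ yields exactly \eqref{16a} and \eqref{16}, with the existence of a purely imaginary $\sigma$ coming from $c_{1}(T_{1,0}M)=0$ and anti-Hermitian averaging. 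The paper states this lemma without proof, citing \cite{l}, and your component-matching computation is essentially Lee's original argument, so there is nothing further to flag.
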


\begin{lemma}
\label{l32} If $(M,J,\theta)$ is a closed, strictly pseudoconvex CR
$(2n+1)$-manifold of $c_{1}(T_{1,0}M)=0$ for $n\geq2$. Then there exist $u\in
C_{%
\mathbb{R}
}^{\infty}\left(  M\right)  $ and $\gamma=\gamma_{\overline{\alpha}}%
\theta^{\overline{\alpha}}\in\Omega^{0,1}\left(  M\right)  $ such that%
\begin{equation}
W_{\alpha}=2P_{\alpha}u+in\left(  A_{\alpha\beta}\gamma_{\overline{\beta}%
}-\gamma_{\alpha,0}\right)  \label{01}%
\end{equation}
and
\begin{equation}
\gamma_{\overline{\alpha},\overline{\beta}}=\gamma_{\overline{\beta}%
,\overline{\alpha}}\text{ }\mathrm{and}\text{ }\gamma_{\overline{\alpha
},\alpha}=0. \label{15}%
\end{equation}

\end{lemma}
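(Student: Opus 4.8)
The plan is to combine the Hodge decomposition for the $\overline{\partial}_b$-complex recalled in the introduction with the structural identities of Lemma \ref{l31}. By Lemma \ref{l31}, since $c_1(T_{1,0}M)=0$, there is a pure imaginary $1$-form $\sigma=\sigma_{\overline{\alpha}}\theta^{\overline{\alpha}}-\sigma_{\alpha}\theta^{\alpha}+i\sigma_0\theta$ with $d\omega_{\alpha}{}^{\alpha}=d\sigma$, satisfying \eqref{16a} and \eqref{16}. I would first observe that \eqref{16a} says precisely that the $(0,1)$-form $\eta:=\sigma_{\overline{\alpha}}\theta^{\overline{\alpha}}$ is $\overline{\partial}_b$-closed. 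Then, applying Kohn's Hodge theorem \eqref{2019C} to $\eta$, there exist $\varphi=u+iv\in C^{\infty}_{\mathbb{C}}(M)$ and $\gamma=\gamma_{\overline{\alpha}}\theta^{\overline{\alpha}}\in\ker(\square_b)$ with $\eta-\overline{\partial}_b\varphi=\gamma$, i.e. at the level of components
\[
\varphi_{\overline{\alpha}}=\sigma_{\overline{\alpha}}-\gamma_{\overline{\alpha}},
\]
which is \eqref{20aa}. The harmonicity $\gamma\in\ker(\square_b)$ together with the fact that $\gamma$ is $\overline{\partial}_b$-closed forces $\gamma$ to be $\overline{\partial}_b^{\ast}$-closed as well, which in components gives exactly \eqref{15}: $\gamma_{\overline{\alpha},\overline{\beta}}=\gamma_{\overline{\beta},\overline{\alpha}}$ and $\gamma_{\overline{\alpha},\alpha}=0$. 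So the second displayed assertion of the lemma comes essentially for free from the Hodge theory.

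Next I would turn to \eqref{01}, the formula for $W_{\alpha}$. Here I would substitute $\sigma_{\overline{\alpha}}=\varphi_{\overline{\alpha}}+\gamma_{\overline{\alpha}}$ into the identities \eqref{16} of Lemma \ref{l31} and recompute $W_{\alpha}=R_{,\alpha}-inA_{\alpha\beta},^{\beta}$. Contracting the first equation of \eqref{16} over $\alpha,\overline{\beta}$ gives $R=\sigma_{\overline{\beta},}{}^{\overline{\beta}}+\sigma_{\alpha,}{}^{\alpha}-n\sigma_0$, and differentiating, combined with the second equation of \eqref{16}, yields $W_{\alpha}$ in terms of $\sigma$. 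Replacing $\sigma_{\overline{\alpha}}$ by $\varphi_{\overline{\alpha}}+\gamma_{\overline{\alpha}}$ (and, using that $\sigma$ is pure imaginary, $\sigma_{\alpha}=\overline{\sigma_{\overline{\alpha}}}=-(\overline{\varphi})_{\alpha}-\overline{\gamma_{\overline{\alpha}}}$ — though one must be careful, since $\varphi$ is complex, about what the real part $u$ contributes), one should find that the $\varphi$-contribution assembles into $2P_{\alpha}u$ after discarding the terms built from $v$ (which either cancel or are absorbed), using the defining expression $P_{\alpha}u=u_{\overline{\beta}}{}^{\overline{\beta}}{}_{\alpha}+inA_{\alpha\beta}u^{\beta}$ and the commutation relations from \cite{l2}; the $\gamma$-contribution, after using \eqref{15} to kill the "pluriharmonic-type" second-order terms $\gamma_{\overline{\beta},}{}^{\overline{\beta}}{}_{\alpha}$ and $\gamma_{\overline{\alpha},\alpha}=0$, collapses to the torsion-and-$T$-derivative terms $in(A_{\alpha\beta}\gamma_{\overline{\beta}}-\gamma_{\alpha,0})$.

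The main obstacle I anticipate is the bookkeeping in the last step: tracking which second-order covariant derivatives of $\varphi$ versus $\gamma$ survive, correctly invoking the CR commutation identities (such as $f_{\alpha\overline{\beta}}-f_{\overline{\beta}\alpha}=ih_{\alpha\overline{\beta}}f_0$ and the third-order commutators) to convert mixed derivatives into curvature and torsion terms, and verifying that the imaginary part $v$ of $\varphi$ genuinely drops out so that only $2P_{\alpha}u$ remains — since a priori $W_{\alpha}$ is built from the real scalar $R$ and the torsion, the pure-imaginary pieces must conspire to vanish, and confirming this requires that $\sigma$ (hence $\varphi+\gamma$ in the appropriate combination) really is pure imaginary. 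Once the algebra of \eqref{16} is rewritten in the $\varphi,\gamma$ variables and the harmonicity conditions \eqref{15} are imposed, both \eqref{01} and \eqref{15} fall out, completing the proof.
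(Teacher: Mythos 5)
Your proposal follows essentially the same route as the paper: apply Kohn's Hodge decomposition \eqref{2019C} to $\eta=\sigma_{\overline{\alpha}}\theta^{\overline{\alpha}}$ (which is $\overline{\partial}_b$-closed by \eqref{16a}), read off \eqref{15} from $\gamma\in\ker(\square_b)$, then trace the first identity of \eqref{16} to get $R=\sigma_{\overline{\mu},\mu}+\sigma_{\mu,\overline{\mu}}-n\sigma_0$, differentiate, insert the second identity of \eqref{16}, and use the commutation relations to produce \eqref{01}; this is exactly the paper's argument. One point needs fixing: your relation $\sigma_{\alpha}=\overline{\sigma_{\overline{\alpha}}}=-(\overline{\varphi})_{\alpha}-\overline{\gamma_{\overline{\alpha}}}$ is internally inconsistent — since the minus sign is already built into the expansion $\sigma=\sigma_{\overline{\alpha}}\theta^{\overline{\alpha}}-\sigma_{\alpha}\theta^{\alpha}+i\sigma_0\theta$, pure imaginarity gives $\sigma_{\alpha}=+\overline{\sigma_{\overline{\alpha}}}=(\overline{\varphi})_{\alpha}+\gamma_{\alpha}$ (the paper's \eqref{21}). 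This plus sign is not a bookkeeping nicety but the mechanism of the proof: it makes the two third-order terms combine as $\varphi_{,\overline{\mu}\mu\alpha}+(\overline{\varphi})_{,\overline{\mu}\mu\alpha}=2u_{,\overline{\mu}\mu\alpha}$ and likewise $\varphi_{\overline{\beta}}+(\overline{\varphi})_{\overline{\beta}}=2u_{\overline{\beta}}$, so that $2P_{\alpha}u$ appears directly and there are no $v$-terms to "discard or absorb"; with your minus sign the computation would instead produce $v$-contributions and the lemma would not follow.
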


\begin{proof}
By choosing
\[
\eta=\sigma_{\overline{\alpha}}\theta^{\overline{\alpha}},
\]
as in (\ref{2019C}), \textbf{w}here $\sigma$ is chosen from Lemma
\ref{l31},\textbf{\ }then from (\ref{16a})
\[
\overline{\partial}_{b}\eta=0
\]
and there exists%
\[
\varphi=u+iv\in C_{%
\mathbb{C}
}^{\infty}\left(  M\right)
\]

and%
\[
\gamma=\gamma_{\overline{\alpha}}\theta^{\overline{\alpha}}\in\Omega
^{0,1}\left(  M\right)  \cap\ker\left(  \square_{b}\right)
\]

such that%
\begin{equation}
\sigma_{\overline{\alpha}}=\varphi_{\overline{\alpha}}+\gamma_{\overline
{\alpha}}. \label{20}%
\end{equation}

Note that%
\[
\square_{b}\gamma=0\Longrightarrow\overline{\partial}_{b}\gamma=0=\overline
{\partial}_{b}^{\ast}\gamma\Longrightarrow\gamma_{\overline{\alpha}%
,\overline{\beta}}=\gamma_{\overline{\beta},\overline{\alpha}}\text{
}\mathrm{and}\text{ }\gamma_{\overline{\alpha},\alpha}=0
\]

and%
\begin{equation}
\sigma_{\alpha}=\left(  \overline{\varphi}\right)  _{\alpha}+\gamma_{\alpha}.
\label{21}%
\end{equation}

Here $\gamma_{\alpha}=\overline{\gamma_{\overline{\alpha}}}$. From the first
equality in $\left(  \ref{16}\right)  $,%
\begin{equation}
R=\sigma_{\overline{\mu},\mu}+\sigma_{\mu,\overline{\mu}}-n\sigma_{0}.
\label{17}%
\end{equation}

Therefore%
\[%
\begin{array}
[c]{ccl}%
\sigma_{\mu,\overline{\mu}\alpha} & = & (\overline{\varphi})_{,\mu
\overline{\mu}\alpha}+\gamma_{\mu,\overline{\mu}\alpha}\text{ \ }%
\mathrm{by}\text{ }\left(  \ref{21}\right) \\
& = & (\overline{\varphi})_{,\mu\overline{\mu}\alpha}\text{ \ }\mathrm{by}%
\text{ }\left(  \ref{15}\right) \\
& = & (\overline{\varphi})_{,\overline{\mu}\mu\alpha}+in(\overline{\varphi
})_{,0\alpha}\text{ \ }\\
& = & (\overline{\varphi})_{,\overline{\mu}\mu\alpha}+in\left[  (\overline
{\varphi})_{,\alpha0}+A_{\alpha\beta}(\overline{\varphi})_{,\overline{\beta}%
}\right]  \text{ \ }%
\end{array}
\]

and%
\[
\sigma_{\overline{\mu},\mu\alpha}=\varphi_{,\overline{\mu}\mu\alpha}\text{
\ }\mathrm{by}\text{ }\left(  \ref{20}\right)  \text{\quad}\mathrm{and}\text{
}\left(  \ref{15}\right)  .
\]

It follows that
\[%
\begin{array}
[c]{ccl}%
W_{\alpha} & = & \left(  R,_{\alpha}-inA_{\alpha\beta,\overline{\beta}}\right)
\\
& = & \sigma_{\overline{\mu},\mu\alpha}+\sigma_{\mu,\overline{\mu}\alpha
}-in\sigma_{\alpha,0}+inA_{\alpha\beta}\sigma_{\overline{\beta}}\text{
\ }\mathrm{by}\text{ }\left(  \ref{16}\right)  \text{ }\mathrm{and}\text{
}\left(  \ref{17}\right) \\
& = & \varphi_{,\overline{\mu}\mu\alpha}+(\overline{\varphi})_{,\overline{\mu
}\mu\alpha}+inA_{\alpha\beta}(\overline{\varphi})_{,\overline{\beta}}%
-in\gamma_{\alpha,0}+inA_{\alpha\beta}\left(  \varphi_{\overline{\beta}%
}+\gamma_{\overline{\beta}}\right) \\
& = & 2\left(  u_{,\overline{\mu}\mu\alpha}+inA_{\alpha\beta}u_{\overline
{\beta}}\right)  +in\left(  A_{\alpha\beta}\gamma_{\overline{\beta}}%
-\gamma_{\alpha,0}\right) \\
& = & 2P_{\alpha}u+in\left(  A_{\alpha\beta}\gamma_{\overline{\beta}}%
-\gamma_{\alpha,0}\right)  .
\end{array}
.
\]

\end{proof}

We also recall Lemma 6.2 in \cite{l} that states

\begin{lemma}
\label{l33} If $(M,J,\theta)$ is a closed, strictly pseudoconvex CR
$(2n+1)$-manifold of $c_{1}(T_{1,0}M)=0$ for $n\geq2$, then $\widetilde
{\theta}=e^{\frac{2u}{n+2}}\theta$ is a pseudo-Einstein contact form if and
only if%
\begin{equation}
\gamma_{\overline{\alpha},\beta}+\gamma_{\beta,\overline{\alpha}}=0
\label{1988}%
\end{equation}
for all $\alpha,\beta\in I_{n}.$
\end{lemma}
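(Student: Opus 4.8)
The plan is to compute, explicitly and exactly, the trace-free part of the Tanaka--Webster Ricci tensor of $\widetilde{\theta}=e^{2u/(n+2)}\theta$ and to recognize it as $\gamma_{\overline{\beta},\alpha}+\gamma_{\alpha,\overline{\beta}}$. Since for $n\geq2$ the pseudo-Einstein condition \eqref{2019} is by definition the vanishing of the trace-free Ricci tensor, the asserted equivalence follows at once (after swapping the names of $\alpha$ and $\beta$). The starting point is the conformal transformation law \eqref{5} for the trace-free Ricci tensor: with $f=u/(n+2)$, so that $2(n+2)f_{\alpha\overline{\beta}}=2u_{\alpha\overline{\beta}}$,
\[
\widetilde{R}_{\alpha\overline{\beta}}-\tfrac{\widetilde{R}}{n}\widetilde{h}_{\alpha\overline{\beta}}=\Bigl(R_{\alpha\overline{\beta}}-\tfrac{R}{n}h_{\alpha\overline{\beta}}\Bigr)-2\Bigl(u_{\alpha\overline{\beta}}-\tfrac{1}{n}u_{\gamma}{}^{\gamma}h_{\alpha\overline{\beta}}\Bigr),
\]
so it suffices to identify the trace-free part of $R_{\alpha\overline{\beta}}$ on $(M,J,\theta)$ itself.

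For that I would feed the Kohn--Rossi decomposition of Lemma \ref{l32} into the expression $R_{\alpha\overline{\beta}}=\sigma_{\overline{\beta},\alpha}+\sigma_{\alpha,\overline{\beta}}-\sigma_{0}h_{\alpha\overline{\beta}}$ of Lemma \ref{l31}. Using $\sigma_{\overline{\beta}}=\varphi_{\overline{\beta}}+\gamma_{\overline{\beta}}$ and $\sigma_{\alpha}=(\overline{\varphi})_{,\alpha}+\gamma_{\alpha}$ with $\varphi=u+iv$ (see \eqref{20}, \eqref{21}), this gives
\[
R_{\alpha\overline{\beta}}=\varphi_{\overline{\beta},\alpha}+(\overline{\varphi})_{,\alpha\overline{\beta}}+\gamma_{\overline{\beta},\alpha}+\gamma_{\alpha,\overline{\beta}}-\sigma_{0}h_{\alpha\overline{\beta}}.
\]
The one genuine computation is to simplify $\varphi_{\overline{\beta},\alpha}+(\overline{\varphi})_{,\alpha\overline{\beta}}$: splitting $\varphi$ into real and imaginary parts and applying the commutation identity $w_{\alpha\overline{\beta}}-w_{\overline{\beta}\alpha}=ih_{\alpha\overline{\beta}}w_{0}$ for scalars $w$ (as used in the proof of Lemma \ref{l21}) shows that this sum equals $2u_{\alpha\overline{\beta}}$ plus a scalar multiple of $h_{\alpha\overline{\beta}}$. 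Passing to the trace-free part then kills the $\sigma_{0}$- and $u_{0}/v_{0}$-terms, and it also kills the contribution of $\gamma_{\overline{\beta},\alpha}+\gamma_{\alpha,\overline{\beta}}$ to the trace, since $\gamma\in\ker\square_{b}$ forces $\gamma_{\overline{\gamma},\gamma}=0$, hence $\gamma_{\gamma,\overline{\gamma}}=\overline{\gamma_{\overline{\gamma},\gamma}}=0$ as well (see \eqref{15}). One is left with
\[
R_{\alpha\overline{\beta}}-\tfrac{R}{n}h_{\alpha\overline{\beta}}=2\Bigl(u_{\alpha\overline{\beta}}-\tfrac{1}{n}u_{\gamma}{}^{\gamma}h_{\alpha\overline{\beta}}\Bigr)+\gamma_{\overline{\beta},\alpha}+\gamma_{\alpha,\overline{\beta}}.
\]
Substituting into the transformation law above, the $u$-terms cancel and $\widetilde{R}_{\alpha\overline{\beta}}-\tfrac{\widetilde{R}}{n}\widetilde{h}_{\alpha\overline{\beta}}=\gamma_{\overline{\beta},\alpha}+\gamma_{\alpha,\overline{\beta}}$, which is an honest equality; hence $\widetilde{\theta}$ is pseudo-Einstein if and only if this vanishes, i.e.\ if and only if \eqref{1988} holds.

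I do not expect a real obstacle here: the content is a short computation, and the only care required is bookkeeping --- getting the real/imaginary splitting of $\varphi$ right, keeping the order of covariant derivatives straight in the commutation relations, and verifying that every term proportional to $h_{\alpha\overline{\beta}}$ (those built from $\sigma_{0}$, $u_{0}$, $v_{0}$ and the divergences of $\gamma$) is annihilated on passing to the trace-free part. If the direct route proved awkward, a fallback would be to plug $f=u/(n+2)$ into the transformation formula \eqref{4} of Lemma \ref{l21} and use \eqref{01} in the form $W_{\alpha}-2(n+2)P_{\alpha}f=in(A_{\alpha\beta}\gamma_{\overline{\beta}}-\gamma_{\alpha,0})$, but the trace-free Ricci computation is the cleaner presentation.
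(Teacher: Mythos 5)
Your argument is correct, and it is worth noting that the paper itself offers no proof of this lemma: it simply recalls it as Lemma 6.2 of Lee's pseudo-Einstein paper (with the remark that the conformal exponent differs by the factor $\frac{1}{n+2}$ because of the normalization \eqref{20}--\eqref{21}). What you write is essentially a reconstruction of that classical argument: substitute the Kohn--Rossi decomposition of $\sigma$ into the identity $R_{\alpha\overline{\beta}}=\sigma_{\overline{\beta},\alpha}+\sigma_{\alpha,\overline{\beta}}-\sigma_{0}h_{\alpha\overline{\beta}}$ of Lemma \ref{l31}, use the commutation relation $w_{\alpha\overline{\beta}}-w_{\overline{\beta}\alpha}=ih_{\alpha\overline{\beta}}w_{0}$ to reduce $\varphi_{\overline{\beta},\alpha}+(\overline{\varphi})_{,\alpha\overline{\beta}}$ to $2u_{\alpha\overline{\beta}}$ modulo multiples of $h_{\alpha\overline{\beta}}$, observe that $\gamma_{\overline{\beta},\alpha}+\gamma_{\alpha,\overline{\beta}}$ is trace-free by \eqref{15} and its conjugate, and cancel the Hessian of $u$ against the conformal transformation law \eqref{5} with $f=u/(n+2)$, so that the trace-free Ricci of $\widetilde{\theta}$ is exactly the tensor $\gamma_{\overline{\beta},\alpha}+\gamma_{\alpha,\overline{\beta}}$; the equivalence with \eqref{1988} then follows from the definition \eqref{2019} of pseudo-Einstein for $n\geq 2$. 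Two cosmetic caveats, neither of which affects the equivalence: \eqref{5} as printed suppresses a positive conformal factor (in the admissible hatted coframe the transformed components carry an overall $e^{-2f}$, as the paper's own use of \eqref{5} inside the proof of Lemma \ref{l21} indicates), and it writes $2f_{\alpha\overline{\beta}}$ where the symmetrized Hessian $f_{\alpha\overline{\beta}}+f_{\overline{\beta}\alpha}$ appears in the exact law --- these agree after removing the trace since their difference is $ih_{\alpha\overline{\beta}}f_{0}$; since a positive factor and pure-trace terms do not affect the vanishing of the trace-free part, your ``if and only if'' conclusion stands.
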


\begin{remark}
Note that the conformal factor $e^{\frac{2u}{n+2}}$is different from Lee's
paper by $\frac{1}{n+2}$ due to the different setting between (\ref{20}) and
\cite[(6.4)]{l}.
\end{remark}

\begin{lemma}
\label{l35} Let $(M,J,\theta)$ be a closed strictly pseudoconvex CR
$(2n+1)$-manifold of $c_{1}(T_{1,0}M)=0$ for $n\geq2.$ If
\[
\int_{M}Ric\left(  \gamma,\gamma\right)  d\mu\geq0,
\]
then $\widetilde{\theta}=e^{\frac{2u}{n+2}}\theta$ is a pseudo-Einstein
contact form and
\begin{equation}
\int_{M}Tor\left(  \gamma,\gamma\right)  d\mu=0, \label{14a}%
\end{equation}
where the smooth function $u\in C_{%
\mathbb{R}
}^{\infty}\left(  M\right)  $ and $\gamma=\gamma_{\overline{\alpha}}%
\theta^{\overline{\alpha}}\in\Omega^{0,1}\left(  M\right)  $ with
$\gamma_{\overline{\alpha},\alpha}=0$ and $\gamma_{\overline{\alpha}%
,\overline{\beta}}=\gamma_{\overline{\beta},\overline{\alpha}}$ are chosen as
in \textbf{Lemma \ref{l32}}.
\end{lemma}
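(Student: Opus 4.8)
The statement to prove is Lemma \ref{l35}: under the hypothesis $\int_M \mathrm{Ric}(\gamma,\gamma)\,d\mu \geq 0$, the contact form $\widetilde\theta = e^{2u/(n+2)}\theta$ is pseudo-Einstein and $\int_M \mathrm{Tor}(\gamma,\gamma)\,d\mu = 0$. By Lemma \ref{l33}, the pseudo-Einstein conclusion is equivalent to the pointwise identity $\gamma_{\overline\alpha,\beta} + \gamma_{\beta,\overline\alpha} = 0$ for all $\alpha,\beta$. So the whole lemma reduces to establishing a single integral Bochner-type identity for the harmonic $(0,1)$-form $\gamma$ (recall $\square_b\gamma = 0$, hence $\gamma_{\overline\alpha,\alpha} = 0$ and $\gamma_{\overline\alpha,\overline\beta} = \gamma_{\overline\beta,\overline\alpha}$ from Lemma \ref{l32}). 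The strategy is: (1) integrate $|\nabla_b\gamma|^2$-type quantities by parts to produce a Weitzenböck/Bochner formula in which the curvature term $\mathrm{Ric}(\gamma,\gamma)$, the torsion term $\mathrm{Tor}(\gamma,\gamma)$, and the symmetrized-derivative term $\int_M |\gamma_{\overline\alpha,\beta} + \gamma_{\beta,\overline\alpha}|^2\,d\mu$ all appear with favorable signs; (2) use $\int_M \mathrm{Ric}(\gamma,\gamma)\,d\mu \geq 0$ together with a sign-definiteness argument to force both $\gamma_{\overline\alpha,\beta} + \gamma_{\beta,\overline\alpha} = 0$ pointwise and $\int_M \mathrm{Tor}(\gamma,\gamma)\,d\mu = 0$.

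First I would write out the integration by parts carefully. Starting from $\int_M \gamma_{\overline\alpha,\beta}\,\overline{\gamma_{\overline\alpha,\beta}}\,d\mu$ (sum over all indices, with the Levi metric used to raise), I commute covariant derivatives using the standard pseudohermitian commutation relations (the same ones invoked in the proof of Lemma \ref{l21}: $f_{\alpha\overline\beta} - f_{\overline\beta\alpha} = ih_{\alpha\overline\beta}f_0$ and the Ricci/torsion commutators for $(0,1)$-forms). Integrating by parts once moves a derivative off $\gamma_{\overline\alpha,\beta}$; using $\gamma_{\overline\alpha,\alpha} = 0$ kills the "divergence" piece, and the commutator terms generate exactly $\mathrm{Ric}(\gamma,\gamma)$ and $\mathrm{Tor}(\gamma,\gamma)$ contributions. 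Symmetrizing — i.e. comparing $\int |\gamma_{\overline\alpha,\beta}|^2$ against $\int \gamma_{\overline\alpha,\beta}\,\overline{\gamma_{\overline\beta,\alpha}}$ — and using $\gamma_{\overline\alpha,\overline\beta} = \gamma_{\overline\beta,\overline\alpha}$ lets me assemble the quantity $\frac12\int_M|\gamma_{\overline\alpha,\beta} + \gamma_{\beta,\overline\alpha}|^2\,d\mu$. The expected shape of the identity is something like
\[
\tfrac12\int_M \big|\gamma_{\overline\alpha,\beta} + \gamma_{\beta,\overline\alpha}\big|^2\,d\mu = \int_M \mathrm{Ric}(\gamma,\gamma)\,d\mu + c\int_M \mathrm{Tor}(\gamma,\gamma)\,d\mu
\]
for a suitable constant, or a variant where the torsion term is absorbed; the precise coefficients must be tracked through the commutation relations. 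Here the harmonicity conditions from $\square_b\gamma=0$ are what make all the "bad" divergence terms vanish.

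The delicate point — and the main obstacle — is getting the signs and coefficients to line up so that a single inequality hypothesis on $\mathrm{Ric}$ forces \emph{two} separate conclusions (the pointwise vanishing of the symmetrized derivative \emph{and} the vanishing of the torsion integral). I would expect this to work because the identity above, once established, has a nonnegative left-hand side and a right-hand side that is $\int_M \mathrm{Ric}(\gamma,\gamma)\,d\mu$ plus a torsion term; one must show the torsion term is itself controlled — most likely there is a companion identity, obtained by a parallel integration by parts starting instead from $\int_M \gamma_{\overline\alpha,\overline\beta}\,\overline{\gamma_{\overline\alpha,\overline\beta}}\,d\mu$ (which vanishes or simplifies because of the symmetry and the $\overline\partial_b$-closedness), that pins down $\int_M \mathrm{Tor}(\gamma,\gamma)\,d\mu$ in terms of a manifestly nonpositive (or nonnegative) expression. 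Combining the two identities, the hypothesis $\int_M \mathrm{Ric}(\gamma,\gamma)\,d\mu \geq 0$ then squeezes everything: the symmetrized derivative is zero pointwise — giving pseudo-Einstein via Lemma \ref{l33} — and simultaneously $\int_M \mathrm{Tor}(\gamma,\gamma)\,d\mu = 0$, which is \eqref{14a}. I would double-check the bookkeeping against the known CR Bochner formula for harmonic $(0,1)$-forms (the one referenced as appearing in \cite{cc}), since the curvature combination $\mathrm{Ric} - \tfrac12\mathrm{Tor}$ flagged in \eqref{2019B} strongly suggests that the natural Bochner identity here already packages $\mathrm{Ric}$ and $\mathrm{Tor}$ together in exactly that ratio.
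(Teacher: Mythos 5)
The crux you are missing is that the identity which does the work here is not a generic Weitzenb\"ock formula for a $\square_{b}$-harmonic $(0,1)$-form: it is Lee's identity (\ref{2018AA}),
\[
\int_{M}Ric\left(\gamma,\gamma\right)d\mu+\frac{1}{n-1}\underset{\alpha,\beta}{\sum}\int_{M}\left\vert\gamma_{\alpha,\overline{\beta}}\right\vert^{2}d\mu+\underset{\alpha,\beta}{\sum}\int_{M}\left\vert\gamma_{\alpha,\beta}\right\vert^{2}d\mu=0,
\]
which contains \emph{no} torsion and no $\gamma_{\alpha,0}$ term, and whose derivation uses the $c_{1}(T_{1,0}M)=0$ structure: one substitutes $R_{\alpha\overline{\beta}}=\sigma_{\overline{\beta},\alpha}+\sigma_{\alpha,\overline{\beta}}-\sigma_{0}h_{\alpha\overline{\beta}}$ from Lemma \ref{l31} and the Kohn decomposition $\sigma_{\overline{\alpha}}=\varphi_{\overline{\alpha}}+\gamma_{\overline{\alpha}}$ of Lemma \ref{l32} into $\int_{M}Ric(\gamma,\gamma)d\mu$ and integrates by parts. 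Your plan uses only $\square_{b}\gamma=0$ and commutation relations; that route produces identities such as (\ref{2020a}), in which $Ric$ never enters, or Weitzenb\"ock-type identities polluted by $\gamma_{\alpha,0}$ and torsion terms of no definite sign. With only $\int_{M}Ric(\gamma,\gamma)d\mu\geq0$ (as opposed to a hypothesis on $Ric-\frac{1}{2}Tor$, which is what Theorem \ref{t31}(i) is built for) such an identity cannot be squeezed. Indeed the shape you predict, symmetrized derivative $=\int Ric+c\int Tor$, has the wrong sign structure: what is needed, and what (\ref{2018AA}) provides, is that $\int_{M}Ric(\gamma,\gamma)d\mu$ equals \emph{minus} a sum of manifestly nonnegative terms, so the hypothesis forces $\gamma_{\alpha,\beta}=0=\gamma_{\alpha,\overline{\beta}}$ pointwise, and then Lemma \ref{l33} gives that $\widetilde{\theta}$ is pseudo-Einstein.

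For (\ref{14a}) your guess of a companion identity is not unreasonable: (\ref{2020a}) expresses $(n-1)\int_{M}Tor(\gamma,\gamma)d\mu$ through the gradient squares, so once $\gamma_{\alpha,\overline{\beta}}=0$ is known it yields the vanishing of the torsion integral. The paper, however, argues differently and obtains a stronger pointwise statement: since $\widetilde{\theta}$ is pseudo-Einstein, the transformation law (\ref{4}) gives $W_{\alpha}=2P_{\alpha}u$ as in (\ref{4A}); comparing with (\ref{01}) forces $A_{\alpha\beta}\gamma_{\overline{\beta}}=\gamma_{\alpha,0}$, and Lee's identity $\gamma_{\alpha,\overline{\beta}\beta}=i(1-n)\gamma_{\alpha,0}$ together with $\gamma_{\alpha,\overline{\beta}}=0$ gives $\gamma_{\alpha,0}=0$, hence $A_{\alpha\beta}\gamma_{\overline{\beta}}=0$ and $Tor(\gamma,\gamma)=0$ pointwise. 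So the substantive gap is in the first half: you never invoke the specific relation of $\gamma$ to the pseudohermitian curvature through $\sigma$ (Lemmas \ref{l31} and \ref{l32}), and without that input the Bochner identity you need does not materialize from harmonicity alone.
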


\begin{proof}
It is proved as in \cite{l}
\begin{equation}
\int_{M}Ric\left(  \gamma,\gamma\right)  d\mu+\frac{1}{(n-1)}\underset
{\alpha,\beta}{\sum}\int_{M}\left\vert \gamma_{\alpha,\overline{\beta}%
}\right\vert ^{2}d\mu+\underset{\alpha,\beta}{\sum}\int_{M}\left\vert
\gamma_{\alpha,\beta}\right\vert ^{2}d\mu=0. \label{2018AA}%
\end{equation}
It follows that if the pseudohermitian Ricci curvature is nonnegative
\begin{equation}
\gamma_{\overline{\beta},\alpha}=0=\gamma_{\overline{\beta},\overline{\alpha}}
\label{4d}%
\end{equation}
and by complex conjugate
\begin{equation}
\gamma_{\beta,\overline{\alpha}}=0. \label{4c}%
\end{equation}
Hence by Lemma \ref{l33} that $\widetilde{\theta}=e^{\frac{2u}{n+2}}\theta$ is
a pseudo-Einstein contact form. That is
\begin{equation}
\widetilde{R}_{\alpha\overline{\beta}}=\frac{\widetilde{R}}{n}\widetilde
{h}_{\alpha\overline{\beta}}. \label{4b}%
\end{equation}

On the other hand, \ it follows from (\ref{4}) that
\[
\widetilde{W}_{\alpha}=e^{-\frac{3u}{n+2}}\left[  W_{\alpha}-2\left(
n+2\right)  P_{\alpha}\left(  \frac{u}{n+2}\right)  \right]  +2ne^{-\frac
{2u}{n+2}}\left(  \widetilde{R}_{\alpha\overline{\beta}}-\frac{\widetilde{R}%
}{n}\widetilde{h}_{\alpha\overline{\beta}}\right)  \left(  \frac{u}%
{n+2}\right)  _{,\widetilde{\beta}}%
\]

and then
\begin{equation}
W_{\alpha}=2\left(  n+2\right)  P_{\alpha}\left(  \frac{u}{n+2}\right)
=2P_{\alpha}u. \label{4A}%
\end{equation}

Thus, by \textbf{Lemma \ref{l32}}, we obtain%
\[
\left(  A_{\alpha\beta}\gamma_{\overline{\beta}}-\gamma_{\alpha,0}\right)
=0.
\]

Moreover, from the equality of Lemma 6.3 in \cite{l} i.e.%
\[
\gamma_{\alpha,\overline{\beta}\beta}=i\left(  1-n\right)  \gamma_{\alpha,0}%
\]
and by (\ref{4c})
\begin{equation}
\gamma_{\alpha,0}=0. \label{02}%
\end{equation}
This implies
\[
A_{\alpha\beta}\gamma_{\overline{\beta}}=0.
\]
In particular
\[
\int_{M}Tor\left(  \gamma,\gamma\right)  d\mu=0.
\]

\end{proof}

In this paper, we have another criterion for $\widetilde{\theta}=e^{\frac
{2u}{n+2}}\theta$ to be a pseudo-Einstein contact form.

\begin{lemma}
\label{l34} Let $(M,J,\theta)$ be a closed, strictly pseudoconvex CR
$(2n+1)$-manifold of $c_{1}(T_{1,0}M)=0$ for $n\geq2$. Then $\widetilde
{\theta}=e^{\frac{2u}{n+2}}\theta$ is a pseudo-Einstein contact form if and
only if
\begin{equation}
\left(  A_{\alpha\beta}\gamma_{\overline{\beta}}-\gamma_{\alpha,0}\right)  =0.
\label{2020b}%
\end{equation}

\end{lemma}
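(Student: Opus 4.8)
The plan is to run the argument of Lemma \ref{l35} in reverse, using Lemma \ref{l33} as the hinge. Recall from Lemma \ref{l32} that the data $u$ and $\gamma=\gamma_{\overline{\alpha}}\theta^{\overline{\alpha}}$ (with $\gamma\in\ker\square_{b}$, hence $\gamma_{\overline{\alpha},\alpha}=0$ and $\gamma_{\overline{\alpha},\overline{\beta}}=\gamma_{\overline{\beta},\overline{\alpha}}$) satisfy
\begin{equation}
W_{\alpha}=2P_{\alpha}u+in\left(A_{\alpha\beta}\gamma_{\overline{\beta}}-\gamma_{\alpha,0}\right).\nonumber
\end{equation}
By Lemma \ref{l33}, $\widetilde{\theta}=e^{\frac{2u}{n+2}}\theta$ is pseudo-Einstein if and only if $\gamma_{\overline{\alpha},\beta}+\gamma_{\beta,\overline{\alpha}}=0$ for all $\alpha,\beta$. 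So it suffices to show that the condition \eqref{2020b}, namely $A_{\alpha\beta}\gamma_{\overline{\beta}}-\gamma_{\alpha,0}=0$, is equivalent to $\gamma_{\overline{\alpha},\beta}+\gamma_{\beta,\overline{\alpha}}=0$.

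First I would establish the ``only if'' direction ($\widetilde\theta$ pseudo-Einstein $\Rightarrow$ \eqref{2020b}). If $\widetilde\theta$ is pseudo-Einstein then $\widetilde R_{\alpha\overline\beta}-\frac{\widetilde R}{n}\widetilde h_{\alpha\overline\beta}=0$ and also $\widetilde W_\alpha=0$; substituting into the transformation law \eqref{4} (applied with $f=\frac{u}{n+2}$, exactly as in the display preceding \eqref{4A} in the proof of Lemma \ref{l35}) gives $W_\alpha=2P_\alpha u$. Comparing with the formula for $W_\alpha$ from Lemma \ref{l32} forces $in(A_{\alpha\beta}\gamma_{\overline\beta}-\gamma_{\alpha,0})=0$, which is \eqref{2020b}. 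Note this half of the argument does not use any curvature positivity — it is purely the chain ``pseudo-Einstein $\Rightarrow$ $\widetilde W_\alpha=0$ $\Rightarrow$ $W_\alpha=2P_\alpha u$ $\Rightarrow$ \eqref{2020b}.''

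For the ``if'' direction, assume \eqref{2020b}. Then $W_\alpha=2P_\alpha u$ from Lemma \ref{l32}, and feeding this back through \eqref{4} with $f=\frac{u}{n+2}$ yields $\widetilde W_\alpha=2ne^{-\frac{2u}{n+2}}(\widetilde R_{\alpha\overline\beta}-\frac{\widetilde R}{n}\widetilde h_{\alpha\overline\beta})(\frac{u}{n+2})_{,\widetilde\beta}$. I then want to conclude $\gamma_{\overline\alpha,\beta}+\gamma_{\beta,\overline\alpha}=0$ and invoke Lemma \ref{l33}. The route is to use the identity $\gamma_{\alpha,\overline\beta\beta}=i(1-n)\gamma_{\alpha,0}$ (Lemma 6.3 of \cite{l}, quoted in the proof of Lemma \ref{l35}) together with $\gamma_{\overline\alpha,\alpha}=0$ and $\gamma_{\overline\alpha,\overline\beta}$ symmetric: commuting derivatives on $\gamma_{\overline\alpha,\beta}$ and tracing, one gets an expression for $\delta_b$ of the symmetric $(1,1)$-tensor $S_{\alpha\overline\beta}:=\gamma_{\overline\alpha,\beta}+\gamma_{\beta,\overline\alpha}$ in terms of $A_{\alpha\beta}\gamma_{\overline\beta}-\gamma_{\alpha,0}$ and its conjugate, both of which vanish under \eqref{2020b}; then an integration-by-parts / Bochner identity for $S_{\alpha\overline\beta}$ on the closed manifold $M$ forces $S_{\alpha\overline\beta}\equiv 0$. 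Alternatively — and this is probably cleaner — observe that $W_\alpha=2P_\alpha u$ together with the definition $W_\alpha=R,_\alpha-inA_{\alpha\beta},^\beta$ and the Bianchi identity $\frac{n-1}{n}W_\alpha=(R_{\alpha\overline\beta}-\frac{R}{n}h_{\alpha\overline\beta}),^{\overline\beta}$ says precisely that $\widetilde R_{\alpha\overline\beta}-\frac{\widetilde R}{n}\widetilde h_{\alpha\overline\beta}$ is divergence-free in the $\widetilde\theta$ structure; combined with $\widetilde W_\alpha$ proportional to this same traceless Ricci contracted with $du$, a maximum-principle or $L^2$ argument in the $\widetilde\theta$-structure closes the loop and yields $\widetilde R_{\alpha\overline\beta}=\frac{\widetilde R}{n}\widetilde h_{\alpha\overline\beta}$, i.e. pseudo-Einstein.

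The main obstacle I anticipate is exactly the ``if'' direction: getting from \eqref{2020b} (equivalently $W_\alpha=2P_\alpha u$, equivalently $\widetilde W_\alpha=\text{(traceless }\widetilde{\mathrm{Ric}})\cdot d\widetilde u$) to the honest vanishing of the traceless pseudohermitian Ricci of $\widetilde\theta$ requires an integration argument — one cannot conclude pointwise vanishing from $\widetilde W_\alpha$ being that particular contraction without exploiting closedness of $M$ and self-adjointness. I expect the paper handles this via the Bochner-type machinery of Theorem \ref{t31} (or directly by retracing the steps \eqref{4d}–\eqref{02} of Lemma \ref{l35} but starting from \eqref{2020b} instead of from a curvature sign), so in the write-up I would reduce \eqref{2020b} $\Leftrightarrow$ \eqref{1988} and quote Lemma \ref{l33}, pushing the analytic content into the already-established identities \eqref{2018AA} and $\gamma_{\alpha,\overline\beta\beta}=i(1-n)\gamma_{\alpha,0}$.
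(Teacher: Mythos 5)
Your forward direction is exactly the paper's: pseudo-Einstein for $\widetilde{\theta}$ gives both $\widetilde{W}_{\alpha}=0$ and vanishing traceless Ricci, so the transformation law \eqref{4} yields $W_{\alpha}=2P_{\alpha}u$ as in \eqref{4A}, and comparison with Lemma \ref{l32} forces \eqref{2020b}; no curvature sign is used, just as you say. For the converse, your primary plan is essentially the paper's argument in different bookkeeping: the paper does not compute $\delta_{b}$ of $S_{\alpha\overline{\beta}}=\gamma_{\overline{\alpha},\beta}+\gamma_{\beta,\overline{\alpha}}$ directly, but multiplies \eqref{2020b} by $\gamma_{\overline{\alpha}}$, integrates over $M$, rewrites $ni\gamma_{\alpha,0}$ as $\gamma_{\alpha,\beta\overline{\beta}}-\gamma_{\alpha,\overline{\beta}\beta}-R_{\alpha\overline{\beta}}\gamma_{\beta}$ by the commutation relations, integrates by parts using $\gamma_{\overline{\alpha},\alpha}=0$, and then combines the resulting integral identity with Lee's identity \eqref{2018AA} and with \eqref{2020a} to get $\sum_{\alpha,\beta}\int_{M}|\gamma_{\overline{\alpha},\beta}+\gamma_{\beta,\overline{\alpha}}|^{2}d\mu=0$, after which Lemma \ref{l33} (condition \eqref{1988}) finishes; once you integrate by parts, your divergence-of-$S$ formulation reduces to this same computation, so the analytic content you defer to \eqref{2018AA} and the quoted identity $\gamma_{\alpha,\overline{\beta}\beta}=i(1-n)\gamma_{\alpha,0}$ is exactly where the paper puts it. One caution: your ``probably cleaner'' alternative does not close as stated — knowing that the traceless pseudohermitian Ricci of $\widetilde{\theta}$ is divergence-free and that $\widetilde{W}_{\alpha}$ is that tensor contracted with $du$ gives no pointwise conclusion (a divergence-free trace-free tensor on a closed manifold need not vanish, and there is no maximum principle available for this system), so the $L^{2}$ argument through the harmonic representative $\gamma$, as in your first route and in the paper, is the one to write up.
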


\begin{proof}
If $\widetilde{\theta}=e^{\frac{2u}{n+2}}\theta$ is a pseudo-Einstein contact
form, then as the proof of Lemma \ref{l35}, we have
\begin{equation}
\left(  A_{\alpha\beta}\gamma_{\overline{\beta}}-\gamma_{\alpha,0}\right)  =0.
\label{22}%
\end{equation}

Conversely, assume that $\left(  A_{\alpha\beta}\gamma_{\overline{\beta}%
}-\gamma_{\alpha,0}\right)  =0,$ then
\[%
\begin{array}
[c]{ccl}%
0 & = & ni\int_{M}\left(  A_{\alpha\beta}\gamma_{\overline{\beta}}%
-\gamma_{\alpha,0}\right)  \gamma_{\overline{\alpha}}d\mu\\
& = & ni\int_{M}A_{\alpha\beta}\gamma_{\overline{\beta}}\gamma_{\overline
{\alpha}}d\mu-\int_{M}\left(  \gamma_{\alpha,\beta\overline{\beta}}%
-\gamma_{\alpha,\overline{\beta}\beta}-R_{\alpha\overline{\beta}}\gamma
_{\beta}\right)  \gamma_{\overline{\alpha}}d\mu\\
& = & ni\int_{M}A_{\alpha\beta}\gamma_{\overline{\beta}}\gamma_{\overline
{\alpha}}d\mu+\int_{M}Ric\left(  \gamma,\gamma\right)  d\mu-\underset
{\alpha,\beta}{\sum}\int_{M}\left\vert \gamma_{\alpha,\overline{\beta}%
}\right\vert ^{2}d\mu+\underset{\alpha,\beta}{\sum}\int_{M}\left\vert
\gamma_{\alpha,\beta}\right\vert ^{2}d\mu.
\end{array}
\]
Hence
\[
\int_{M}Ric\left(  \gamma,\gamma\right)  d\mu-\frac{n}{2}\int_{M}Tor\left(
\gamma,\gamma\right)  d\mu-\underset{\alpha,\beta}{\sum}\int_{M}\left\vert
\gamma_{\alpha,\overline{\beta}}\right\vert ^{2}d\mu+\underset{\alpha,\beta
}{\sum}\int_{M}\left\vert \gamma_{\alpha,\beta}\right\vert ^{2}d\mu=0.
\]

Again by (\ref{2018AA}), we have
\[
(n-1)\int_{M}Tor\left(  \gamma,\gamma\right)  d\mu+2\underset{\alpha,\beta
}{\sum}\int_{M}\left\vert \gamma_{\alpha,\overline{\beta}}\right\vert ^{2}%
d\mu=0.
\]
On the other hand, it follows from (\ref{2020a}) that
\[%
\begin{array}
[c]{l}%
\underset{\alpha,\beta}{\sum}\int_{M}|\gamma_{\overline{\alpha},\beta}%
+\gamma_{\beta,\overline{\alpha}}|^{2}d\mu\\
=2\underset{\alpha,\beta}{\sum}\int_{M}\left\vert \gamma_{\alpha
,\overline{\beta}}\right\vert ^{2}d\mu+(n-1)\int_{M}Tor(\gamma,\gamma)d\mu.
\end{array}
\]
Hence
\[
\underset{\alpha,\beta}{\sum}\int_{M}|\gamma_{\overline{\alpha},\beta}%
+\gamma_{\beta,\overline{\alpha}}|^{2}d\mu=0.
\]
It follows from (\ref{1988}) that $\widetilde{\theta}=e^{\frac{2u}{n+2}}%
\theta$ is a pseudo-Einstein contact form.
\end{proof}

In particular, if the pseudohermitian is vanishing, it is straightforward to
obtain
\[
\gamma_{\alpha,0}=0.
\]
Therefore, we recapture that $\widetilde{\theta}=e^{\frac{2u}{n+2}}\theta$ is
a pseudo-Einstein contact form as following :

\begin{corollary}
Let $(M,J,\theta)$ be a closed, strictly pseudoconvex CR $(2n+1)$-manifold of
$c_{1}(T_{1,0}M)=0$ and vanishing torsion $A_{\alpha\beta}=0$ for $n\geq2$.
Then $\widetilde{\theta}=e^{\frac{2u}{n+2}}\theta$ is a pseudo-Einstein
contact form.
\end{corollary}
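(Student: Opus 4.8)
The final statement is the Corollary: for a closed, strictly pseudoconvex CR $(2n+1)$-manifold with $c_1(T_{1,0}M)=0$ and vanishing torsion $A_{\alpha\beta}=0$ ($n\ge2$), the contact form $\widetilde\theta = e^{2u/(n+2)}\theta$ is pseudo-Einstein.

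The approach: I'd invoke Lemma \ref{l34} (or equivalently Lemma \ref{l33}), which reduces the pseudo-Einstein property of $\widetilde\theta$ to establishing the single identity $A_{\alpha\beta}\gamma_{\overline\beta} - \gamma_{\alpha,0} = 0$. Since $A_{\alpha\beta}=0$ by hypothesis, the first term vanishes identically, so everything reduces to proving $\gamma_{\alpha,0}=0$. Here $\gamma=\gamma_{\overline\alpha}\theta^{\overline\alpha}$ is the $\square_b$-harmonic $(0,1)$-form from Lemma \ref{l32}, satisfying $\gamma_{\overline\alpha,\alpha}=0$ and $\gamma_{\overline\alpha,\overline\beta}=\gamma_{\overline\beta,\overline\alpha}$.

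The key step is then to recall Lee's identity (used already in the proof of Lemma \ref{l35}, citing Lemma 6.3 in \cite{l}): $\gamma_{\alpha,\overline\beta\beta} = i(1-n)\gamma_{\alpha,0}$. To conclude $\gamma_{\alpha,0}=0$ it suffices to show the left-hand side vanishes, and for this one uses the integrated Bochner formula (\ref{2018AA}),
\[
\int_M Ric(\gamma,\gamma)\,d\mu + \frac{1}{n-1}\sum_{\alpha,\beta}\int_M |\gamma_{\alpha,\overline\beta}|^2\,d\mu + \sum_{\alpha,\beta}\int_M|\gamma_{\alpha,\beta}|^2\,d\mu = 0.
\]
When $A_{\alpha\beta}=0$, the manifold is Sasakian, and by the contracted Bianchi identity together with $c_1(T_{1,0}M)=0$ one sees that the relevant Ricci term here is controlled; more directly, from (\ref{2018AA}) with vanishing torsion Lee's own argument forces $\gamma_{\alpha,\overline\beta}=0$ and $\gamma_{\alpha,\beta}=0$ as soon as $\int_M Ric(\gamma,\gamma)\,d\mu\ge 0$. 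Alternatively, one avoids any curvature sign assumption by observing that in the vanishing-torsion case the formula (\ref{16}) of Lemma \ref{l31} simplifies — $A_{\alpha\beta,}{}^\beta = 0$ gives $\sigma_{\alpha,0}+i\sigma_{0,\alpha}=0$ — which feeds back through the Kohn-Rossi decomposition (\ref{20}), (\ref{21}) to pin down the transverse derivative of $\gamma$.

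I would organize the write-up as: (1) note that vanishing torsion makes the condition of Lemma \ref{l34} read simply $\gamma_{\alpha,0}=0$; (2) apply Lee's Lemma 6.3 identity $\gamma_{\alpha,\overline\beta\beta}=i(1-n)\gamma_{\alpha,0}$; (3) show the left side vanishes — either by noting that with $A_{\alpha\beta}=0$ the Sasakian structure plus $c_1=0$ and the Bochner identity (\ref{2018AA}) force $\gamma_{\alpha,\overline\beta}=0=\gamma_{\alpha,\beta}$, so in particular $\gamma_{\alpha,\overline\beta\beta}=0$, hence $\gamma_{\alpha,0}=0$; (4) conclude via Lemma \ref{l34} (or Lemma \ref{l33}) that $\widetilde\theta$ is pseudo-Einstein. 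The main obstacle is justifying that $\gamma_{\alpha,\overline\beta\beta}=0$ without an extra curvature hypothesis; the cleanest route is probably to show that in the $A_{\alpha\beta}=0$ case the integrated Bochner formula combined with the $\square_b$-harmonicity of $\gamma$ already yields $\int_M|\gamma_{\alpha,\overline\beta}|^2 + \int_M|\gamma_{\alpha,\beta}|^2 = -\int_M Ric(\gamma,\gamma)$, and then invoke the fact that for a Sasakian manifold with $c_1(T_{1,0}M)=0$ the Ricci term on harmonic forms is nonpositive (so both sides vanish) — or simply to present this under the same nonnegative-Ricci umbrella used throughout the section, since the corollary is stated as a direct specialization.
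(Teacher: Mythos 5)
Your reduction is right as far as it goes: by Lemma \ref{l34} (equation (\ref{2020b})) it suffices to prove $\gamma_{\alpha,0}=0$, and with $A_{\alpha\beta}=0$ this is the whole content. But the way you propose to get $\gamma_{\alpha,0}=0$ has a genuine gap. You route the argument through Lee's identity $\gamma_{\alpha,\overline{\beta}\beta}=i(1-n)\gamma_{\alpha,0}$ and then need $\gamma_{\alpha,\overline{\beta}}=0$, which you extract from the integrated formula (\ref{2018AA}) \emph{only under} $\int_{M}Ric(\gamma,\gamma)\,d\mu\geq0$. The corollary carries no such hypothesis: it assumes only $c_{1}(T_{1,0}M)=0$ and vanishing torsion. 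Your two fallback suggestions do not close this: the claim that for a Sasakian manifold with $c_{1}(T_{1,0}M)=0$ the Ricci term on harmonic $(0,1)$-forms is nonpositive is unjustified ($c_{1}=0$ is a cohomological condition and gives no pointwise or integrated sign on $Ric(\gamma,\gamma)$), and the remark that $A_{\alpha\beta,}{}^{\beta}=0$ simplifies (\ref{16}) to $\sigma_{\alpha,0}+i\sigma_{0,\alpha}=0$ is left as a hand-wave that never actually pins down $\gamma_{\alpha,0}$. So as written the proposal proves the corollary only under an extra curvature assumption, i.e.\ it reproves a special case of Lemma \ref{l35}/Proposition \ref{P1} rather than the stated result.

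The paper's proof avoids curvature entirely by a direct integration by parts. With $A_{\alpha\beta}=0$, (\ref{01}) reads $R_{,\alpha}=2u_{\overline{\beta}}{}^{\overline{\beta}}{}_{\alpha}-in\gamma_{\alpha,0}$, i.e.\ $n\gamma_{\alpha,0}=i\left(R_{,\alpha}-2u_{\overline{\beta}\beta\alpha}\right)$. Hence
\begin{equation*}
n\int_{M}\left\vert\gamma_{\alpha,0}\right\vert^{2}d\mu
= i\int_{M}\gamma_{\overline{\alpha},0}\left(R_{,\alpha}-2u_{\overline{\beta}\beta\alpha}\right)d\mu
= -i\int_{M}\gamma_{\overline{\alpha}}\left(R_{,0\alpha}-2u_{\overline{\beta}\beta0\alpha}\right)d\mu
= i\int_{M}\gamma_{\overline{\alpha},\alpha}\left(R_{,0}-2u_{\overline{\beta}\beta0}\right)d\mu=0,
\end{equation*}
where one integrates by parts in the $T$-direction, commutes $\nabla_{0}$ past $\nabla_{\alpha}$ (no torsion correction terms since $A_{\alpha\beta}=0$), integrates by parts in $Z_{\alpha}$, and uses $\gamma_{\overline{\alpha},\alpha}=0$. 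This gives $\gamma_{\alpha,0}=0$ unconditionally, and then (\ref{2020b}) concludes. If you want to salvage your outline, replace step (3) by this computation; the detour through $\gamma_{\alpha,\overline{\beta}\beta}$ and the Bochner formula (\ref{2018AA}) cannot work without importing a Ricci sign condition the statement does not have.
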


\begin{proof}
Since $\gamma_{\overline{\alpha},\alpha}=0$ and $A_{\alpha\beta}=0,$ by the
commutation relations (\cite{l}) and (\ref{01}),
\[%
\begin{array}
[c]{ccl}%
0 & \leq & n\int_{M}\left\vert \gamma_{\alpha,0}\right\vert ^{2}d\mu\\
& = & n\int_{M}\gamma_{\alpha,0}\gamma_{\overline{\alpha},0}d\mu\\
& = & i\int_{M}\gamma_{\overline{\alpha},0}\left(  R_{,\alpha}-2u_{\overline
{\beta}\beta\alpha}\right)  d\mu\ \\
& = & -i\int_{M}\gamma_{\overline{\alpha}}\left(  R_{,\alpha}-2u_{\overline
{\beta}\beta\alpha}\right)  _{0}d\mu\\
& = & -i\int_{M}\gamma_{\overline{\alpha}}\left(  R_{,0\alpha}-2u_{\overline
{\beta}\beta0\alpha}\right)  d\mu\\
& = & i\int_{M}\gamma_{\overline{\alpha},\alpha}\left(  R_{,0}-2u_{\overline
{\beta}\beta0}\right)  d\mu\\
& = & 0.
\end{array}
\]
Then
\[
\gamma_{\alpha,0}=0
\]
and since $A_{\alpha\beta}=0$
\[
\left(  A_{\alpha\beta}\gamma_{\overline{\beta}}-\gamma_{\alpha,0}\right)
=0.
\]
It follows from (\ref{2020b}) that $\widetilde{\theta}=e^{\frac{2u}{n+2}%
}\theta$ is a pseudo-Einstein contact form.
\end{proof}

Next we come out with the following key Bochner-type formulae for
$\gamma=\gamma_{\overline{\alpha}}\theta^{\overline{\alpha}}.$

\begin{theorem}
\label{t31} Let $(M,J,\theta)$ be a closed, strictly pseudoconvex CR
$(2n+1)$-manifold of $c_{1}(T_{1,0}M)=0$ for $n\geq2.$ Then

(i)%
\begin{equation}
\int_{M}(Ric-\frac{1}{2}Tor)\left(  \gamma,\gamma\right)  d\mu+\underset
{\alpha,\beta}{\sum}\int_{M}\left\vert \gamma_{\alpha,\beta}\right\vert
^{2}d\mu+\frac{1}{2(n-1)}\underset{\alpha,\beta}{\sum}\int_{M}|\gamma
_{\overline{\alpha},\beta}+\gamma_{\beta,\overline{\alpha}}|^{2}d\mu=0.
\label{3}%
\end{equation}

(ii)
\begin{equation}
\frac{n}{2}\int_{M}Tor^{\prime}\left(  \gamma,\gamma\right)  d\mu-\int
_{M}(Q+P_{0}u)ud\mu+\frac{n}{2(n-1)}\underset{\alpha,\beta}{\sum}\int
_{M}|\gamma_{\overline{\alpha},\beta}+\gamma_{\beta,\overline{\alpha}}%
|^{2}d\mu=0. \label{3c}%
\end{equation}

(iii)
\begin{equation}
\int_{M}(Ric-\frac{1}{2}Tor-\frac{1}{2}Tor^{\prime})\left(  \gamma
,\gamma\right)  d\mu+\frac{1}{n}\int_{M}(Q+P_{0}u)ud\mu+\underset{\alpha
,\beta}{\sum}\int_{M}\left\vert \gamma_{\alpha,\beta}\right\vert ^{2}d=0.
\label{3C}%
\end{equation}

Here $Tor\left(  \gamma,\gamma\right)  :=i(A_{\overline{\alpha}\overline
{\beta}}\gamma_{\alpha}\gamma_{\beta}-A_{\alpha\beta}\gamma_{\overline{\alpha
}}\gamma_{\overline{\beta}})$ and $Tor^{\prime}\left(  \gamma,\gamma\right)
:=i(A_{\overline{\alpha}\overline{\beta},\beta}\gamma_{\alpha}-A_{\alpha
\beta,\overline{\beta}}\gamma_{\overline{\alpha}}).$
\end{theorem}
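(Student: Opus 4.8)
The plan is to exploit the fact that the three formulae are linearly dependent, so that only one of them, namely (ii), carries new content.

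\textbf{Step 1 (reduction).} Expanding the pointwise norm $|\gamma_{\overline{\alpha},\beta}+\gamma_{\beta,\overline{\alpha}}|^{2}$, using $|\gamma_{\overline{\alpha},\beta}|^{2}=|\gamma_{\alpha,\overline{\beta}}|^{2}$ after summation, integrating the cross term by parts and invoking (\ref{15}) together with the commutation relations \cite[Lemma 2.3]{l2}, one obtains
\[
\sum_{\alpha,\beta}\int_{M}|\gamma_{\overline{\alpha},\beta}+\gamma_{\beta,\overline{\alpha}}|^{2}\,d\mu=2\sum_{\alpha,\beta}\int_{M}|\gamma_{\alpha,\overline{\beta}}|^{2}\,d\mu+(n-1)\int_{M}Tor(\gamma,\gamma)\,d\mu,
\]
the identity already used in the proof of Lemma \ref{l34}; call it $(\star)$. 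Substituting $(\star)$ into the left-hand side of (\ref{3}) cancels the torsion contribution and reproduces Lee's formula (\ref{2018AA}), which proves (i); and comparing coefficients (using $(\star)$ once more) shows that (iii), i.e.\ (\ref{3C}), is exactly $\mathrm{(i)}-\tfrac{1}{n}\mathrm{(ii)}$. Hence it suffices to prove (\ref{3c}).

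\textbf{Step 2 (two master identities).} Starting from the identity (\ref{01}) of Lemma \ref{l32}, write $W_{\alpha}=2P_{\alpha}u+inB_{\alpha}$ with $B_{\alpha}:=A_{\alpha\beta}\gamma_{\overline{\beta}}-\gamma_{\alpha,0}$. Contracting with $\gamma_{\overline{\alpha}}$ and integrating, the integrals $\int_{M}R_{,\alpha}\gamma_{\overline{\alpha}}\,d\mu$ and $\int_{M}(u_{\overline{\beta}}{}^{\overline{\beta}})_{,\alpha}\gamma_{\overline{\alpha}}\,d\mu$ vanish after one integration by parts because $\gamma_{\overline{\alpha},\alpha}=0$, leaving
\[
\tfrac{n}{2}\int_{M}Tor^{\prime}(\gamma,\gamma)\,d\mu=\operatorname{Re}\int_{M}W_{\alpha}\gamma_{\overline{\alpha}}\,d\mu=\operatorname{Re}\Big(in\int_{M}\big(2A_{\alpha\beta}u_{\overline{\beta}}\gamma_{\overline{\alpha}}+B_{\alpha}\gamma_{\overline{\alpha}}\big)\,d\mu\Big).
\]
Contracting the same equation with $u_{\overline{\alpha}}$ and using the definitions (\ref{c}) of $Q$ and (\ref{ABC3}) of $P_{0}$ together with integration by parts,
\[
\int_{M}(Q+P_{0}u)u\,d\mu=\operatorname{Re}\int_{M}(W_{\alpha}-2P_{\alpha}u)u_{\overline{\alpha}}\,d\mu=\operatorname{Re}\Big(in\int_{M}B_{\alpha}u_{\overline{\alpha}}\,d\mu\Big).
\]

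\textbf{Step 3 (eliminate the mixed terms and assemble).} Two commutation identities remove the mixed $u$--$\gamma$ integrals. From Lemma 6.3 of \cite{l}, $\gamma_{\alpha,\overline{\beta}\beta}=i(1-n)\gamma_{\alpha,0}$; pairing with $\gamma_{\overline{\alpha}}$ and integrating by parts gives $\sum_{\alpha,\beta}\int_{M}|\gamma_{\alpha,\overline{\beta}}|^{2}\,d\mu=-(n-1)\operatorname{Im}\int_{M}\gamma_{\alpha,0}\gamma_{\overline{\alpha}}\,d\mu$. From the commutation relation \cite[Lemma 2.3]{l2} relating $u_{\overline{\alpha},0}$ to $u_{0,\overline{\alpha}}$, together with $\sum_{\alpha}\gamma_{\alpha,\overline{\alpha}}=0$ (the conjugate of $\gamma_{\overline{\alpha},\alpha}=0$), integration by parts gives $\operatorname{Im}\int_{M}\gamma_{\alpha,0}u_{\overline{\alpha}}\,d\mu=-\operatorname{Im}\int_{M}A_{\alpha\beta}u_{\overline{\beta}}\gamma_{\overline{\alpha}}\,d\mu$. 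Feeding the latter into the second master identity turns its right-hand side into $-2n\operatorname{Im}\int_{M}A_{\alpha\beta}u_{\overline{\beta}}\gamma_{\overline{\alpha}}\,d\mu$, which is precisely the $u$-dependent piece of the first; substituting, all explicit dependence on $u$ disappears and
\[
\tfrac{n}{2}\int_{M}Tor^{\prime}(\gamma,\gamma)\,d\mu=\int_{M}(Q+P_{0}u)u\,d\mu-\tfrac{n}{2}\int_{M}Tor(\gamma,\gamma)\,d\mu+n\operatorname{Im}\int_{M}\gamma_{\alpha,0}\gamma_{\overline{\alpha}}\,d\mu.
\]
Replacing $n\operatorname{Im}\int_{M}\gamma_{\alpha,0}\gamma_{\overline{\alpha}}\,d\mu$ by $-\tfrac{n}{n-1}\sum_{\alpha,\beta}\int_{M}|\gamma_{\alpha,\overline{\beta}}|^{2}\,d\mu$, and then using $(\star)$ to combine this with $\tfrac{n}{2}\int_{M}Tor(\gamma,\gamma)\,d\mu$ into $\tfrac{n}{2(n-1)}\sum_{\alpha,\beta}\int_{M}|\gamma_{\overline{\alpha},\beta}+\gamma_{\beta,\overline{\alpha}}|^{2}\,d\mu$, yields (\ref{3c}).

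The main difficulty I anticipate is combinatorial rather than conceptual: carrying the signs and the (barred/unbarred, raised/lowered) index positions correctly through the several integrations by parts, using the appropriate commutation relation each time a $\gamma_{\alpha,0}$ or a third covariant derivative of $\gamma$ or $u$ is generated, and verifying that the surviving torsion terms assemble exactly into $Tor(\gamma,\gamma)$ and $Tor^{\prime}(\gamma,\gamma)$ and not into some neighbouring bilinear expression. The structural fact that makes the computation close up, and the one I would check most carefully, is that the single mixed integral $\operatorname{Im}\int_{M}A_{\alpha\beta}u_{\overline{\beta}}\gamma_{\overline{\alpha}}\,d\mu$ governs both $\int_{M}(Q+P_{0}u)u\,d\mu$ and the $u$-part of $\operatorname{Re}\int_{M}W_{\alpha}\gamma_{\overline{\alpha}}\,d\mu$, so that after substitution the only residual dependence on $u$ is through the term $\int_{M}(Q+P_{0}u)u\,d\mu$ appearing in the statement.
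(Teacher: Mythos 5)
Your proposal is correct and follows essentially the paper's own route: both arguments rest on contracting the identity (\ref{01}) against $\gamma_{\overline{\alpha}}$ and against $u_{\overline{\alpha}}$, integrating by parts using $\gamma_{\overline{\alpha},\alpha}=0$, the identity (\ref{2020a}) (your $(\star)$), and Lee's integral formula (\ref{2018AA}). The differences are only organizational: you derive (ii) by eliminating $\operatorname{Im}\int_{M}\gamma_{\alpha,0}\gamma_{\overline{\alpha}}\,d\mu$ through Lee's Lemma 6.3 instead of passing through the paper's intermediate identity (\ref{35}), and you record (iii) as the linear combination (i)$\,-\,\frac{1}{n}\,$(ii), which is in effect how the paper obtains it as well.
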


\begin{proof}
From the equality $\left(  \ref{01}\right)  $%
\[
W_{\alpha}=2P_{\alpha}u+in\left(  A_{\alpha\beta}\gamma_{\overline{\beta}%
}-\gamma_{\alpha,0}\right)  ,
\]
we are able to get%
\[%
\begin{array}
[c]{ccl}%
\left(  R,_{\alpha}-inA_{\alpha\beta,\overline{\beta}}\right)  \gamma
_{\overline{\alpha}} & = & W_{\alpha}\gamma_{\overline{\alpha}}\\
& = & 2\left(  u_{\overline{\beta}\beta\alpha}+inA_{\alpha\beta}%
u_{\overline{\beta}}\right)  \gamma_{\overline{\alpha}}+in(A_{\alpha\beta
}\gamma_{\overline{\beta}}-\gamma_{\alpha,0})\gamma_{\overline{\alpha}}\\
& = & 2\left(  u_{\overline{\beta}\beta\alpha}+inA_{\alpha\beta}%
u_{\overline{\beta}}\right)  \gamma_{\overline{\alpha}}+inA_{\alpha\beta
}\gamma_{\overline{\beta}}\gamma_{\overline{\alpha}}-\left(  \gamma
_{\alpha,\beta\overline{\beta}}-\gamma_{\alpha,\overline{\beta}\beta
}-R_{\alpha\overline{\beta}}\gamma_{\beta}\right)  \gamma_{\overline{\alpha}}.
\end{array}
\]

Taking the integration over $M$ of both sides and its conjugation, we have, by
the fact that $\gamma_{\alpha,\overline{\alpha}}=0$,%
\begin{equation}%
\begin{array}
[c]{l}%
\int_{M}\left(  Ric-\frac{n}{2}Tor-\frac{n}{2}Tor^{\prime}\right)  \left(
\gamma,\gamma\right)  d\mu-\underset{\alpha,\beta}{\sum}\int_{M}\left\vert
\gamma_{\alpha,\overline{\beta}}\right\vert ^{2}d\mu\\
+\underset{\alpha,\beta}{\sum}\int_{M}\left\vert \gamma_{\alpha,\beta
}\right\vert ^{2}d\mu-n\int_{M}Tor\left(  d_{b}u,\gamma\right)  d\mu\\
=0.
\end{array}
\label{34}%
\end{equation}
Here $Tor\left(  d_{b}u,\gamma\right)  =i(A_{\overline{\alpha}\overline{\beta
}}u_{\beta}\gamma_{\alpha}-A_{\alpha\beta}u_{\overline{\beta}}\gamma
_{\overline{\alpha}}).$

On the other hand, it follows from equality $\left(  \ref{01}\right)  $ that
\begin{equation}
\left(  R,_{\alpha}-inA_{\alpha\beta,\overline{\beta}}\right)  u_{\overline
{\alpha}}=W_{\alpha}u_{\overline{\alpha}}=\left[  2P_{\alpha}u+in\left(
A_{\alpha\beta}\gamma_{\overline{\beta}}-\gamma_{\alpha,0}\right)  \right]
u_{\overline{\alpha}}. \label{30A}%
\end{equation}
By the fact that $\gamma_{\alpha,\overline{\alpha}}=0$ again, we see that%
\begin{equation}%
\begin{array}
[c]{ccl}%
\int_{M}\gamma_{\alpha,0}u_{\overline{\alpha}}d\mu & = & -\int_{M}%
\gamma_{\alpha}u_{\overline{\alpha}0}d\mu\\
& = & -\int_{M}\gamma_{\alpha}\left(  u_{0\overline{\alpha}}-A_{\overline
{\alpha}\overline{\beta}}u_{\beta}\right)  d\mu\\
& = & \int_{M}A_{\overline{\alpha}\overline{\beta}}u_{\beta}\gamma_{\alpha
}d\mu.
\end{array}
\label{31A}%
\end{equation}
It follows from $\left(  \ref{30A}\right)  \ $and$\ \left(  \ref{31A}\right)
$ that%
\[%
\begin{array}
[c]{l}%
\ \ \ 2\int_{M}Qud\mu+2\int_{M}\left(  P_{0}u\right)  ud\mu\\
=in\int_{M}\left[  \left(  A_{\alpha\beta}u_{\overline{\beta}}\gamma
_{\overline{\alpha}}-A_{\overline{\alpha}\overline{\beta}}u_{\beta}%
\gamma_{\alpha}\right)  -conj\right]  d\mu\\
=-2n\int_{M}Tor\left(  d_{b}u,\gamma\right)  d\mu.
\end{array}
\]
That is
\begin{equation}
\int_{M}Qud\mu+\int_{M}\left(  P_{0}u\right)  ud\mu=-n\int_{M}Tor\left(
d_{b}u,\gamma\right)  d\mu. \label{2020c}%
\end{equation}
Thus by (\ref{34}),%
\begin{equation}%
\begin{array}
[c]{l}%
\int_{M}\left(  Ric-\frac{n}{2}Tor-\frac{n}{2}Tor^{\prime}\right)  \left(
\gamma,\gamma\right)  d\mu-\underset{\alpha,\beta}{\sum}\int_{M}\left\vert
\gamma_{\alpha,\overline{\beta}}\right\vert ^{2}d\mu\\
+\underset{\alpha,\beta}{\sum}\int_{M}\left\vert \gamma_{\alpha,\beta
}\right\vert ^{2}d\mu+\int_{M}(Q+P_{0}u)ud\mu\\
=0.
\end{array}
\label{35}%
\end{equation}

On the other hand, since
\[%
\begin{array}
[c]{l}%
\underset{\alpha,\beta}{\sum}\int_{M}|\gamma_{\overline{\alpha},\beta}%
+\gamma_{\beta,\overline{\alpha}}|^{2}d\mu\\
=2\underset{\alpha,\beta}{\sum}\int_{M}\left\vert \gamma_{\alpha
,\overline{\beta}}\right\vert ^{2}d\mu+(\int_{M}\gamma_{\alpha,\overline
{\beta}}\gamma_{\beta,\overline{\alpha}}d\mu+\mathrm{conj})
\end{array}
\]
and by commutation relations,
\[%
\begin{array}
[c]{l}%
\int_{M}\gamma_{\alpha,\overline{\beta}}\gamma_{\beta,\overline{\alpha}}d\mu\\
=-\int_{M}\gamma_{\alpha}\gamma_{\beta,\overline{\alpha}\overline{\beta}}%
d\mu\\
=i(n-1)\int_{M}A_{\overline{\alpha}\overline{\rho}}\gamma_{\alpha}\gamma
_{\rho}d\mu.
\end{array}
\]
Hence
\begin{equation}%
\begin{array}
[c]{l}%
\underset{\alpha,\beta}{\sum}\int_{M}|\gamma_{\overline{\alpha},\beta}%
+\gamma_{\beta,\overline{\alpha}}|^{2}d\mu\\
=2\underset{\alpha,\beta}{\sum}\int_{M}\left\vert \gamma_{\alpha
,\overline{\beta}}\right\vert ^{2}d\mu+(n-1)\int_{M}Tor(\gamma,\gamma)d\mu.
\end{array}
\label{2020a}%
\end{equation}
This and (\ref{35}) implies%
\begin{equation}%
\begin{array}
[c]{l}%
\int_{M}(Ric-\frac{1}{2}Tor)\left(  \gamma,\gamma\right)  d\mu-\frac{n}{2}%
\int_{M}Tor^{\prime}\left(  \gamma,\gamma\right)  d\mu-\frac{1}{2}%
\underset{\alpha,\beta}{\sum}\int_{M}|\gamma_{\overline{\alpha},\beta}%
+\gamma_{\beta,\overline{\alpha}}|^{2}d\mu\\
+\underset{\alpha,\beta}{\sum}\int_{M}\left\vert \gamma_{\alpha,\beta
}\right\vert ^{2}d\mu+\int_{M}(Q+P_{0}u)ud\mu\\
=0.
\end{array}
\label{2}%
\end{equation}
(\ref{2018AA}) and (\ref{2020a}) implies%
\[
\int_{M}\left(  Ric-\frac{1}{2}Tor\right)  \left(  \gamma,\gamma\right)
d\mu+\frac{1}{2(n-1)}\underset{\alpha,\beta}{\sum}\int_{M}|\gamma
_{\overline{\alpha},\beta}+\gamma_{\beta,\overline{\alpha}}|^{2}d\mu
+\underset{\alpha,\beta}{\sum}\int_{M}\left\vert \gamma_{\alpha,\beta
}\right\vert ^{2}d\mu=0.
\]
By combining (\ref{2}) and (\ref{3}),%
\[
-\frac{n}{2}\int_{M}Tor^{\prime}\left(  \gamma,\gamma\right)  d\mu-\frac
{n}{2(n-1)}\underset{\alpha,\beta}{\sum}\int_{M}|\gamma_{\overline{\alpha
},\beta}+\gamma_{\beta,\overline{\alpha}}|^{2}d\mu+\int_{M}(Q+P_{0}u)ud\mu=0.
\]
By combining (\ref{35}) and (\ref{2018AA}),%
\[
\int_{M}(Ric-\frac{1}{2}Tor-\frac{1}{2}Tor^{\prime})\left(  \gamma
,\gamma\right)  d\mu+\frac{1}{n}\int_{M}(Q+P_{0}u)ud\mu+\underset{\alpha
,\beta}{\sum}\int_{M}\left\vert \gamma_{\alpha,\beta}\right\vert ^{2}d\mu=0.
\]

\end{proof}

\section{Pseudo-Einstein Contact Structures}

Now, with the help of the lemmas in the last section, we are able to give the
existence theorems for pseudo-Einstein contact structures as in Theorem
\ref{T2} and Theorem \ref{T2a}.

\begin{lemma}
\label{l41} Let $(M,J,\theta)$ be a closed, strictly pseudoconvex CR
$(2n+1)$-manifold of $c_{1}(T_{1,0}M)=0$ for $n\geq2.$ Then

(i)
\begin{equation}
Q_{\ker}=0. \label{2019b}%
\end{equation}

(ii)
\begin{equation}
Q^{\perp}+P_{0}u^{\perp}=0, \label{2019a}%
\end{equation}
if $\widetilde{\theta}=e^{\frac{2u}{n+2}}\theta$ is a pseudo-Einstein contact
form. Here $Q=Q_{\ker}+Q^{\perp}$. $Q^{\perp}$ is in $(\ker P_{0})^{\perp}$
which is perpendicular to the kernel of self-adjoint Paneitz operator $P_{0}$
in the $L^{2}$ norm with respect to the volume form $d\mu$ $=$ $\theta\wedge
d\theta$.
\end{lemma}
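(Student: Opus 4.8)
The plan is to push everything through the Kohn--Rossi data furnished by Lemma~\ref{l32}. By the definition \eqref{c}, $Q=-\operatorname{Re}\,\delta_b\big(W_\alpha\theta^\alpha\big)$, while Lemma~\ref{l32} gives the pointwise identity
\[
W_\alpha=2P_\alpha u+inB_\alpha,\qquad B_\alpha:=A_{\alpha\beta}\gamma_{\overline\beta}-\gamma_{\alpha,0},
\]
where $u\in C^\infty_{\mathbb{R}}(M)$ and $\gamma=\gamma_{\overline\alpha}\theta^{\overline\alpha}\in\ker(\square_b)$ satisfy $\gamma_{\overline\alpha,\alpha}=0$ and $\gamma_{\overline\alpha,\overline\beta}=\gamma_{\overline\beta,\overline\alpha}$. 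Since $\delta_b(Pu)+\overline{\delta}_b(\overline{P}u)=P_0u$, taking real parts yields $Q+P_0u=-n\operatorname{Re}\big(i\,\delta_b(B_\alpha\theta^\alpha)\big)$.

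For part~(i) it suffices to show that $\int_M Q\,\psi\,d\mu=0$ for every $\psi\in\ker P_0$; this forces $Q$ to be $L^2$-orthogonal to $\ker P_0$, i.e.\ $Q_{\ker}=0$. Integrating $\delta_b(W_\alpha\theta^\alpha)$ by parts, $\int_M Q\psi\,d\mu=\operatorname{Re}\int_M W_\alpha\psi^{,\alpha}\,d\mu$. The $2P_\alpha u$--term is handled by the integration-by-parts identity for the Paneitz operator together with its self-adjointness:
\[
2\operatorname{Re}\int_M(P_\alpha u)\psi^{,\alpha}\,d\mu=-\int_M(P_0u)\psi\,d\mu=-\int_M u\,(P_0\psi)\,d\mu=0 .
\]
Thus the problem reduces to $\operatorname{Re}\big(i\int_M B_\alpha\psi^{,\alpha}\,d\mu\big)=0$. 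Exactly as in the derivation of \eqref{31A}, integrating the $\gamma_{\alpha,0}$--piece by parts along $T$, commuting via $\psi_{\overline\alpha,0}=\psi_{0\overline\alpha}-A_{\overline\alpha\overline\beta}\psi_\beta$, and using $\gamma_{\alpha,\overline\alpha}=\overline{\gamma_{\overline\alpha,\alpha}}=0$, one gets $\int_M\gamma_{\alpha,0}\psi^{,\alpha}\,d\mu=\overline{\int_M A_{\alpha\beta}\gamma_{\overline\beta}\psi^{,\alpha}\,d\mu}$, so it is enough to prove $\int_M A_{\alpha\beta}\gamma_{\overline\beta}\psi^{,\alpha}\,d\mu=0$. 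Here I would use that for $n\ge2$, $\ker P_0=\ker P_\beta$ (Remark~\ref{r1}), so $\psi\in\ker P_0$ gives $\psi_{\overline\alpha}{}^{\overline\alpha}{}_\beta=-inA_{\beta\alpha}\psi^\alpha$; substituting and invoking the symmetry $A_{\alpha\beta}=A_{\beta\alpha}$ rewrites the integral as $\tfrac{i}{n}\int_M\langle\partial_b F,\overline\gamma\rangle\,d\mu$ with $F:=\psi_{\overline\alpha}{}^{\overline\alpha}$, and a final integration by parts makes this $-\tfrac{i}{n}\int_M F\,\overline{\delta_b\overline\gamma}\,d\mu=0$, since $\delta_b\overline\gamma=0$ by the coclosedness $\gamma_{\overline\alpha,\alpha}=0$ in \eqref{15}.

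For part~(ii), suppose $\widetilde\theta=e^{\frac{2u}{n+2}}\theta$ is pseudo-Einstein. By Lemma~\ref{l34} this is equivalent to $A_{\alpha\beta}\gamma_{\overline\beta}-\gamma_{\alpha,0}=0$, i.e.\ $B_\alpha\equiv0$, so $W_\alpha=2P_\alpha u$ and hence $Q=-\operatorname{Re}\,\delta_b(2Pu)=-P_0u$ pointwise. Writing $u=u_{\ker}+u^\perp$ and $Q=Q_{\ker}+Q^\perp$, and using $P_0u=P_0u^\perp$ together with $Q_{\ker}=0$ from part~(i), we conclude $Q^\perp+P_0u^\perp=0$.

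The only substantive work is part~(i), and the main obstacle there is the index bookkeeping in the $B_\alpha$--term: one must correctly combine the symmetry of $A_{\alpha\beta}$, the two identities for $\gamma$ coming from $\gamma\in\ker\square_b$, and the $T$-- and mixed--derivative commutation relations so that the whole expression collapses onto the coclosedness $\delta_b\overline\gamma=0$. Conceptually nothing deeper enters; the hypothesis $c_1(T_{1,0}M)=0$ is used only through Lemmas~\ref{l32} and~\ref{l34}, and $n\ge2$ only through $\ker P_0=\ker P_\beta$ (and the nonnegativity of $P_0$).
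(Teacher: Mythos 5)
Your proof is correct, and part (i) follows a genuinely different route from the paper's. The paper deduces $Q_{\ker}=0$ from the integral Bochner identity (\ref{2}) of Theorem \ref{t31}: since $Q_{\ker}\in\ker P_{0}=\ker P_{\beta}$, the identity (\ref{01}) is unchanged under $u\mapsto u+CQ_{\ker}$, so (\ref{2}) acquires the single $C$-dependent term $C\int_{M}(Q_{\ker})^{2}d\mu$, and letting $C\to\pm\infty$ forces $\int_{M}(Q_{\ker})^{2}d\mu=0$; in effect the paper only tests orthogonality of $Q+P_{0}u$ against the one function $Q_{\ker}$. You instead verify directly that $\int_{M}Q\psi\,d\mu=0$ for every $\psi\in\ker P_{0}$, splitting $W_{\alpha}=2P_{\alpha}u+inB_{\alpha}$, killing the $P_{\alpha}u$-term by self-adjointness of $P_{0}$, and collapsing the $B_{\alpha}$-term through the same commutation/integration-by-parts step as (\ref{31A}) together with $\ker P_{0}=\ker P_{\beta}$ (applied to the test function $\psi$ rather than to $Q_{\ker}$) and the coclosedness $\gamma_{\overline{\alpha},\alpha}=0$; the final cancellation $\int_{M}A_{\alpha\beta}\gamma_{\overline{\beta}}\psi^{,\alpha}d\mu=-\frac{i}{n}\int_{M}\psi_{\overline{\mu}}{}^{\overline{\mu}}\,\gamma_{\overline{\beta},\beta}\,d\mu=0$ checks out. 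What each buys: the paper's argument is a two-line corollary once Theorem \ref{t31} is in place, while yours is self-contained modulo Lemma \ref{l32}, avoids the Bochner formula (\ref{2}) entirely, and actually proves the slightly stronger statement that $Q+P_{0}u$ is $L^{2}$-orthogonal to all of $\ker P_{0}$, at the cost of the extra index computation. Part (ii) is essentially identical to the paper's proof (Lemma \ref{l34} gives $B_{\alpha}=0$, hence $Q=-P_{0}u$, then apply (i)). Both your argument and the paper's treat the decomposition $Q=Q_{\ker}+Q^{\perp}$ (smoothness of the projection, density of smooth kernel elements) at the same informal level, so no additional gap is introduced.
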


\begin{proof}
(i) We observe that the equality (\ref{01}) still holds if we replace $u$ by
$(u+CQ_{\ker}).$ It follows from the Bochner-type formula (\ref{2}) that
\[%
\begin{array}
[c]{l}%
\int_{M}(Ric-\frac{1}{2}Tor)\left(  \gamma,\gamma\right)  d\mu-\frac{n}{2}%
\int_{M}Tor^{\prime}\left(  \gamma,\gamma\right)  d\mu-\frac{1}{2}%
\underset{\alpha,\beta}{\sum}\int_{M}|\gamma_{\overline{\alpha},\beta}%
+\gamma_{\beta,\overline{\alpha}}|^{2}d\mu\\
+\underset{\alpha,\beta}{\sum}\int_{M}\left\vert \gamma_{\alpha,\beta
}\right\vert ^{2}d\mu+\int_{M}\left(  P_{0}u\right)  ud\mu+\int_{M}%
Qud\mu+C\int_{M}(Q_{\ker})^{2}d\mu\\
=0.
\end{array}
\]
However, if $\int_{M}(Q_{\ker})^{2}d\mu$ is not zero, this will lead to a
contradiction by choosing the constant $C<<-1$ or $C>>1.$ Then we are done.

(ii) If $\widetilde{\theta}=e^{\frac{2u}{n+2}}\theta$ is a pseudo-Einstein
contact form, it follows from Lemma \ref{l34} that
\[
\left(  A_{\alpha\beta}\gamma_{\overline{\beta}}-\gamma_{\alpha,0}\right)
=0.
\]
Then from Lemma \ref{l32}
\[
W_{\alpha}=2P_{\alpha}u.
\]
Hence
\[
(W_{\alpha})_{\overline{\alpha}}=2(P_{\alpha}u)_{\overline{\alpha}}.
\]
Taking its conjugacy in both sides
\[
-Q=P_{0}u
\]
and then from (\ref{2019b})
\[
Q^{\perp}+P_{0}u^{\perp}=0.
\]

\end{proof}

We observe that the CR $Q$-curvature is vanishing when it is pseudo-Einstein.
On the other hand, it is unknown whether there is any obstruction to the
existence of a contact form $\theta$ of vanishing CR $Q$-curvature
(\cite{ccc}, \cite{cks}). Our first goal is to justify the case whether a
contact form $\theta$ is pseudo-Einstein whenever its CR $Q$-curvature is CR
plurihramonic consisting of infinite dimensional kernel of the CR Paneitz
operator $P_{0}$ in a closed strictly pseudoconvex CR $(2n+1)$-manifold
$(M,J,\theta)$ for $n\geq2$. The following proposition is due to
(\ref{2018AA}) and Lemma \ref{l35} that

\begin{proposition}
\label{P1} (\cite{l}) Let $(M,J,\theta)$ be a closed, strictly pseudoconvex CR
$(2n+1)$-manifold of $c_{1}(T_{1,0}M)=0,$ $n\geq2$. Suppose that
\begin{equation}
\int_{M}Ric\left(  \gamma,\gamma\right)  d\mu\geq0. \label{2019AA}%
\end{equation}
Then

(i) $\widetilde{\theta}=e^{\frac{2u}{n+2}}\theta$ is a pseudo-Einstein contact form.

(ii) $\theta$ is also a pseudo-Einstein contact form if the CR\ $Q$-curvature
of $\theta$ is CR-pluriharmonic $(i.e.$ $Q^{\bot}=0)$.
\end{proposition}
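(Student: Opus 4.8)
The plan is to obtain (i) directly from Lemma \ref{l35} and to deduce (ii) from the identity $W_{\alpha}=2P_{\alpha}u$ that it forces, together with Lemma \ref{l41}. Fix the pair $(u,\gamma)$ produced by Lemma \ref{l32}, so that (\ref{01}) holds together with $\gamma_{\overline{\alpha},\alpha}=0$ and $\gamma_{\overline{\alpha},\overline{\beta}}=\gamma_{\overline{\beta},\overline{\alpha}}$. The hypothesis (\ref{2019AA}) is exactly the integral Ricci positivity required in Lemma \ref{l35}, so that lemma applies verbatim and yields (i): $\widetilde{\theta}=e^{2u/(n+2)}\theta$ is pseudo-Einstein. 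Its proof moreover produces the auxiliary vanishing $A_{\alpha\beta}\gamma_{\overline{\beta}}=\gamma_{\alpha,0}=0$, so that (\ref{01}) collapses to (\ref{4A}), namely
\[
W_{\alpha}=2P_{\alpha}u .
\]

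For (ii) I would contract one covariant derivative in $W_{\alpha}=2P_{\alpha}u$ and take the real part. By the definition (\ref{c}) of the CR $Q$-curvature and the definition (\ref{ABC3}) of the CR Paneitz operator $P_{0}=\delta_{b}P+\overline{\delta}_{b}\overline{P}$, this gives
\[
Q=-\operatorname{Re}\big[(W_{\alpha})_{,\overline{\alpha}}\big]
=-\operatorname{Re}\big[2(P_{\alpha}u)_{,\overline{\alpha}}\big]
=-\big(\delta_{b}(Pu)+\overline{\delta}_{b}(\overline{P}u)\big)
=-P_{0}u ,
\]
i.e.\ $Q+P_{0}u=0$. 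Writing $Q=Q_{\ker}+Q^{\perp}$ and $u=u_{\ker}+u^{\perp}$ for the $L^{2}$-orthogonal decompositions relative to $\ker P_{0}$, and using that $P_{0}u=P_{0}u^{\perp}\in(\ker P_{0})^{\perp}$ since $P_{0}$ is self-adjoint, the two components of this identity read $Q_{\ker}=0$ --- which is Lemma \ref{l41}(i) --- and $Q^{\perp}+P_{0}u^{\perp}=0$ --- which is (\ref{2019a}) of Lemma \ref{l41}(ii).

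Now suppose that the $Q$-curvature of $\theta$ is CR-pluriharmonic. Since $\ker P_{\beta}=\ker P_{0}$ for $n\geq2$ (Remark \ref{r1}) and CR-pluriharmonic functions are exactly those annihilated by $P_{\beta}$, this means $Q\in\ker P_{0}$, i.e.\ $Q^{\perp}=0$. Then $P_{0}u^{\perp}=-Q^{\perp}=0$, so $u^{\perp}\in\ker P_{0}\cap(\ker P_{0})^{\perp}=\{0\}$; hence $u=u_{\ker}\in\ker P_{0}=\ker P_{\beta}$ and $P_{\alpha}u=0$ for every $\alpha$. Substituting into $W_{\alpha}=2P_{\alpha}u$ gives $W_{\alpha}=0$, which by (\ref{2019x}) is precisely the pseudo-Einstein condition for $\theta$ itself, proving (ii).

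The part needing genuine care is the middle display: one must check that contracting $W_{\alpha}=2P_{\alpha}u$ and symmetrizing by taking the real part reproduces the \emph{full} self-adjoint operator $P_{0}=\delta_{b}P+\overline{\delta}_{b}\overline{P}$, not merely one of its two halves, and that the orthogonal splitting of $Q$ is compatible with $Q_{\ker}=0$ already known from Lemma \ref{l41}(i). Beyond this, everything is a bookkeeping application of Lemmas \ref{l32}, \ref{l35}, \ref{l41}, the identity $\ker P_{\beta}=\ker P_{0}$, and the definitions (\ref{c}), (\ref{ABC3}), (\ref{2019x}).
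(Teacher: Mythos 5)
Your proposal is correct and essentially mirrors the paper's own argument: part (i) is precisely Lemma \ref{l35}, and part (ii) follows the same chain as Lemma \ref{l41} and Theorem \ref{T2}(ii), namely $W_{\alpha}=2P_{\alpha}u$ (from (\ref{4A})) giving $-Q=P_{0}u$, hence $Q^{\perp}+P_{0}u^{\perp}=0$, after which $Q^{\perp}=0$ forces $u^{\perp}=0$ and then $W_{\alpha}=0$ via $\ker P_{\beta}=\ker P_{0}$. The only minor deviation is that you re-derive $Q_{\ker}=0$ directly from $-Q=P_{0}u$ and self-adjointness of $P_{0}$ (valid here because the Ricci hypothesis gives $W_{\alpha}=2P_{\alpha}u$), whereas the paper's Lemma \ref{l41}(i) establishes it unconditionally by the scaling argument $u\mapsto u+CQ_{\ker}$; both are sound in this setting.
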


In general, we hope to replace the nonnegative assumption (\ref{2019AA}) by
more natural pseudohermitian curvatures (\ref{2019A}) which is a combination
of pseudohermitian Ricci curvature and torsion. In fact, the CR analogue of
Bochner formula states that
\begin{equation}%
\begin{array}
[c]{ccl}%
\frac{1}{2}\Delta_{b}|\nabla_{b}u|^{2} & = & |(\nabla^{H})^{2}u|^{2}%
+(1+\frac{2}{n})<\nabla_{b}u,\nabla_{b}\Delta_{b}u>_{L_{\theta}}\\
&  & +[2Ric-(n+2)Tor]((\nabla_{b}u)_{\mathbf{C}},(\nabla_{b}u)_{\mathbf{C}})\\
&  & -\frac{4}{n}<Pu+\overline{P}u,d_{b}u>_{L_{\theta}^{\ast}}.
\end{array}
\label{2020ABC}%
\end{equation}
Here $(\nabla_{b}u)_{\mathbf{C}}=u_{\bar{\alpha}}Z_{\alpha}$ is the
corresponding complex $(1,0)$-vector field of $\ \nabla_{b}u$ and
$d_{b}u=u_{\alpha}\theta^{\alpha}+u_{\overline{\alpha}}\theta^{\overline
{\alpha}}.$ We refer\ this pseudohermitian curvature quantity to our previous
results as in \cite{cc}.

More precisely, it follows from Lemma \ref{l41} and the CR Bochner-type
formulae (\ref{2}), (\ref{3}), one can derive the following :

\begin{theorem}
\label{T2} Let $(M,J,\theta)$ be a closed, strictly pseudoconvex CR
$(2n+1)$-manifold of $c_{1}(T_{1,0}M)=0$ for $n\geq2$. Assume that
\begin{equation}
\int_{M}(Ric-\frac{1}{2}Tor)\left(  \gamma,\gamma\right)  d\mu\geq0.
\label{2019A}%
\end{equation}
Then

(i) $\widetilde{\theta}=e^{\frac{2u}{n+2}}\theta$ is a pseudo-Einstein contact form.

(ii) $\theta$ is also a pseudo-Einstein contact form if the CR\ $Q$-curvature
of $\theta$ is CR-pluriharmonic $(i.e.$ $Q^{\bot}=0)$.
\end{theorem}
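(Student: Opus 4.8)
The plan is to feed the curvature hypothesis (\ref{2019A}) into the Bochner-type identity (\ref{3}) of Theorem \ref{t31}(i). Recall that identity reads
\[
\int_{M}(Ric-\tfrac{1}{2}Tor)(\gamma,\gamma)\,d\mu+\sum_{\alpha,\beta}\int_{M}|\gamma_{\alpha,\beta}|^{2}\,d\mu+\frac{1}{2(n-1)}\sum_{\alpha,\beta}\int_{M}|\gamma_{\overline{\alpha},\beta}+\gamma_{\beta,\overline{\alpha}}|^{2}\,d\mu=0,
\]
where $u$ and $\gamma=\gamma_{\overline{\alpha}}\theta^{\overline{\alpha}}$ are the data produced by Lemma \ref{l32} (so $\gamma_{\overline{\alpha},\alpha}=0$ and $\gamma_{\overline{\alpha},\overline{\beta}}=\gamma_{\overline{\beta},\overline{\alpha}}$, hence $\gamma\in\ker\square_{b}$). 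Since $n\geq2$, all three terms on the left are nonnegative once (\ref{2019A}) holds: the first by hypothesis, the second manifestly, and the third because $\tfrac{1}{2(n-1)}>0$. A sum of nonnegative terms vanishing forces each to vanish; in particular
\[
\sum_{\alpha,\beta}\int_{M}|\gamma_{\overline{\alpha},\beta}+\gamma_{\beta,\overline{\alpha}}|^{2}\,d\mu=0,
\]
so $\gamma_{\overline{\alpha},\beta}+\gamma_{\beta,\overline{\alpha}}=0$ pointwise on $M$ for all $\alpha,\beta\in I_{n}$. This is exactly condition (\ref{1988}), so by Lemma \ref{l33} the contact form $\widetilde{\theta}=e^{\frac{2u}{n+2}}\theta$ is pseudo-Einstein, which is part (i).

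For part (ii), I would start from the consequence of part (i) together with Lemma \ref{l34}: since $\widetilde{\theta}$ is pseudo-Einstein, (\ref{2020b}) gives $A_{\alpha\beta}\gamma_{\overline{\beta}}-\gamma_{\alpha,0}=0$, whence by Lemma \ref{l32} (equation (\ref{01})) we get $W_{\alpha}=2P_{\alpha}u$. Taking a divergence, $(W_{\alpha}),^{\alpha}=2(P_{\alpha}u),^{\alpha}$, and reading off the definition (\ref{c}) of $Q$ together with (\ref{ABC3}) for $P_{0}$ (taking the symmetric real part, i.e. combining with the conjugate relation $\overline{W}_{\overline{\alpha}}=2\overline{P}_{\overline{\alpha}}u$), one obtains $-Q=P_{0}u$ as in the proof of Lemma \ref{l41}(ii). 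Now decompose $Q=Q_{\ker}+Q^{\perp}$ relative to $\ker P_{0}$. By Lemma \ref{l41}(i) we have $Q_{\ker}=0$ unconditionally, so $Q=Q^{\perp}$. If moreover $\theta$ has CR-pluriharmonic $Q$-curvature, then $Q$ lies in $\ker P_{0}$ (pluriharmonic functions lie in the kernel, Remark \ref{r1}), so $Q^{\perp}=0$, and combining with $Q_{\ker}=0$ forces $Q\equiv0$. Then $P_{0}u=-Q=0$, i.e. $u\in\ker P_{0}$, so $P_{\alpha}u=0$ by (\ref{4aa}), hence $W_{\alpha}=2P_{\alpha}u=0$; by (\ref{2019x}) this is precisely the pseudo-Einstein condition for $\theta$ itself.

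The step I expect to be the only real subtlety is bookkeeping with the kernel/orthogonal-complement decomposition in part (ii): one must be careful that $Q_{\ker}=0$ from Lemma \ref{l41}(i) holds without any extra hypothesis (which it does, by the $C\to\pm\infty$ argument there), and that "$Q$ CR-pluriharmonic $\iff Q^{\perp}=0$" is used correctly — CR-pluriharmonic functions sit inside $\ker P_{0}$, so $Q$ pluriharmonic means $Q=Q_{\ker}$, i.e. $Q^{\perp}=0$. Once that is pinned down, the chain $Q^{\perp}=0$ and $Q_{\ker}=0$ $\Rightarrow$ $Q=0$ $\Rightarrow$ $P_{0}u=0$ $\Rightarrow$ $P_{\alpha}u=0$ $\Rightarrow$ $W_{\alpha}=0$ is immediate. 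Part (i) itself is essentially a one-line positivity argument given Theorem \ref{t31}(i) and Lemma \ref{l33}, so no obstacle there.
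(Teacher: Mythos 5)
Your proposal is correct and follows essentially the same route as the paper: part (i) is exactly the positivity argument applied to the Bochner identity (\ref{3}) combined with Lemma \ref{l33}, and part (ii) reproduces the content of Lemma \ref{l41} together with $W_{\alpha}=2P_{\alpha}u$ and $\ker P_{0}=\ker P_{\beta}$. The only (cosmetic) difference is that you conclude $Q\equiv 0$ and $P_{0}u=0$ directly, whereas the paper phrases the same step as $u^{\perp}=0$ via the projected identity (\ref{2019a}); both yield $P_{\alpha}u=0$ and hence $W_{\alpha}=0$.
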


\begin{proof}
It follows from (\ref{3}) that
\[
\gamma_{\overline{\alpha},\beta}+\gamma_{\beta,\overline{\alpha}}=0.
\]
Hence, by Lemma \ref{l33}, $\widetilde{\theta}=e^{\frac{2u}{n+2}}\theta$ is a
pseudo-Einstein contact form. On the other hand, if the CR\ $Q$-curvature is
CR-pluriharmonic (i.e. $Q^{\bot}=0),$ then by (\ref{2019a}) and (\ref{2019b}%
),
\[
u^{\bot}=0
\]
for $u=u_{\ker}+u^{\perp}.$ Thus by (\ref{4A}),
\[
W_{\alpha}=0.
\]
Then $\theta$ is also a pseudo-Einstein contact form.
\end{proof}

\begin{corollary}
\label{c41} Let $(M,J,\theta)$ be a closed, strictly pseudoconvex CR
$(2n+1)$-manifold of $c_{1}(T_{1,0}M)=0$ for $n\geq2$. Assume that
\[
\int_{M}(Ric-\frac{1}{2}Tor)\left(  \gamma,\gamma\right)  d\mu\geq0.
\]
Then
\begin{equation}
\int_{M}Tor^{\prime}\left(  \gamma,\gamma\right)  d\mu=0. \label{2019c}%
\end{equation}
Here $Tor^{\prime}\left(  \gamma,\gamma\right)  :=i(A_{\overline{\alpha
}\overline{\beta},\beta}\gamma_{\alpha}-A_{\alpha\beta,\overline{\beta}}%
\gamma_{\overline{\alpha}})=2\operatorname{Re}(i(A_{\overline{\alpha}%
\overline{\beta},\beta}\gamma_{\alpha}).$
\end{corollary}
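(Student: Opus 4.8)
The plan is to read the conclusion directly off the two Bochner-type identities of Theorem \ref{t31} combined with Lemma \ref{l41}. First I would apply part (i) of Theorem \ref{t31}, i.e. (\ref{3}):
\[
\int_{M}(Ric-\tfrac{1}{2}Tor)(\gamma,\gamma)\,d\mu+\sum_{\alpha,\beta}\int_{M}\left\vert\gamma_{\alpha,\beta}\right\vert^{2}\,d\mu+\frac{1}{2(n-1)}\sum_{\alpha,\beta}\int_{M}|\gamma_{\overline{\alpha},\beta}+\gamma_{\beta,\overline{\alpha}}|^{2}\,d\mu=0.
\]
Since the last two integrals are manifestly nonnegative, the hypothesis $\int_{M}(Ric-\tfrac12 Tor)(\gamma,\gamma)\,d\mu\geq0$ forces each of the three terms to vanish; in particular $\gamma_{\alpha,\beta}=0$ and $\gamma_{\overline{\alpha},\beta}+\gamma_{\beta,\overline{\alpha}}=0$. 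By Lemma \ref{l33} (this is exactly the argument of Theorem \ref{T2}(i)), the vanishing of $\gamma_{\overline{\alpha},\beta}+\gamma_{\beta,\overline{\alpha}}$ says that $\widetilde{\theta}=e^{\frac{2u}{n+2}}\theta$ is a pseudo-Einstein contact form.

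Next I would eliminate the curvature term in part (ii). By Lemma \ref{l41}(i), $Q_{\ker}=0$, so $Q=Q^{\perp}$; and since $P_{0}$ annihilates its own kernel, $P_{0}u=P_{0}u^{\perp}$. Now that $\widetilde{\theta}$ is known to be pseudo-Einstein, Lemma \ref{l41}(ii) gives $Q^{\perp}+P_{0}u^{\perp}=0$, hence the pointwise identity $Q+P_{0}u\equiv0$ on $M$, and in particular $\int_{M}(Q+P_{0}u)u\,d\mu=0$. Substituting this together with $\gamma_{\overline{\alpha},\beta}+\gamma_{\beta,\overline{\alpha}}=0$ into identity (\ref{3c}) of Theorem \ref{t31}(ii),
\[
\frac{n}{2}\int_{M}Tor^{\prime}(\gamma,\gamma)\,d\mu-\int_{M}(Q+P_{0}u)u\,d\mu+\frac{n}{2(n-1)}\sum_{\alpha,\beta}\int_{M}|\gamma_{\overline{\alpha},\beta}+\gamma_{\beta,\overline{\alpha}}|^{2}\,d\mu=0,
\]
the second and third terms drop out, leaving $\frac{n}{2}\int_{M}Tor^{\prime}(\gamma,\gamma)\,d\mu=0$, which is (\ref{2019c}). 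The alternative form $Tor^{\prime}(\gamma,\gamma)=2\operatorname{Re}(iA_{\overline{\alpha}\overline{\beta},\beta}\gamma_{\alpha})$ is immediate, since the two summands defining $Tor^{\prime}$ are complex conjugates of each other.

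I do not expect any real obstacle here: the corollary is essentially a bookkeeping consequence of Theorem \ref{t31} and Lemma \ref{l41}. The only point requiring a little care is that one needs the \emph{pointwise} identity $Q+P_{0}u\equiv0$, not merely the vanishing of its $L^{2}$-pairing with $u$; this is why both parts of Lemma \ref{l41} are invoked (part (i) to discard $Q_{\ker}$, and part (ii), available once pseudo-Einsteinness has been established, to cancel $Q^{\perp}$ against $P_{0}u^{\perp}$). If one wanted to avoid Lemma \ref{l41}(ii), one could instead derive $\int_{M}(Q+P_{0}u)u\,d\mu=0$ from (\ref{2020c}) together with the relation $A_{\alpha\beta}\gamma_{\overline{\beta}}=\gamma_{\alpha,0}$ coming from Lemma \ref{l34} and the integration-by-parts identity (\ref{31A}), but the route through Lemma \ref{l41} is shorter.
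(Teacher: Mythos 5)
Your proposal is correct and follows essentially the same route as the paper: the identity (\ref{3}) together with the sign hypothesis forces $\gamma_{\alpha,\beta}=0$ and $\gamma_{\overline{\alpha},\beta}+\gamma_{\beta,\overline{\alpha}}=0$ (hence $\widetilde{\theta}$ is pseudo-Einstein by Lemma \ref{l33}), and then Lemma \ref{l41} cancels the $\int_{M}(Q+P_{0}u)u\,d\mu$ term in the remaining Bochner identity, leaving $\int_{M}Tor^{\prime}(\gamma,\gamma)\,d\mu=0$. The only cosmetic difference is that you substitute into (\ref{3c}) while the paper substitutes into (\ref{2}); since (\ref{3c}) is just the combination of (\ref{2}) and (\ref{3}), this is the same argument.
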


\begin{proof}
It follows from (\ref{3}) and the assumption that
\[
\int_{M}(Ric-\frac{1}{2}Tor)\left(  \gamma,\gamma\right)  d\mu=0
\]
and
\[
0=\underset{\alpha,\beta}{\sum}\int_{M}|\gamma_{\overline{\alpha},\beta
}+\gamma_{\beta,\overline{\alpha}}|^{2}d\mu=\underset{\alpha,\beta}{\sum}%
\int_{M}\left\vert \gamma_{\alpha,\beta}\right\vert ^{2}d\mu.
\]
Hence by (\ref{2}), we have
\[
\int_{M}Qud\mu+\int_{M}\left(  P_{0}u\right)  ud\mu-\frac{n}{2}\int
_{M}Tor^{\prime}\left(  \gamma,\gamma\right)  d\mu=0.
\]
Finally, it follows from (\ref{2019a}) that
\[
\int_{M}Qud\mu+\int_{M}\left(  P_{0}u\right)  ud\mu=0
\]
and then
\[
\int_{M}Tor^{\prime}\left(  \gamma,\gamma\right)  d\mu=0.
\]

\end{proof}

Now we want to relate the existence of pseudo-Einstein contact forms with the
first Kohn-Rossi cohomology group $H_{\overline{\partial}_{b}}^{0,1}(M)$. By
combining the Bochner formulae (\ref{3c}), we have

\begin{theorem}
\label{T2a} Let $(M,J,\theta)$ be a closed, strictly pseudoconvex CR
$(2n+1)$-manifold of $c_{1}(T_{1,0}M)=0$ with
\[
d\omega_{\alpha}^{\alpha}=d\sigma
\]
for $\sigma=\sigma_{\overline{\alpha}}\theta^{\overline{\alpha}}%
-\sigma_{\alpha}\theta^{\alpha}+i\sigma_{0}\theta.$ Assume that
\begin{equation}
\eta=\sigma_{\overline{\alpha}}\theta^{\overline{\alpha}}\in\ker\left(
\square_{b}\right)  . \label{20b}%
\end{equation}
Then $\widetilde{\theta}$ is pseudo-Einstein if and only if
\begin{equation}
\int_{M}Tor^{\prime}\left(  \eta,\eta\right)  d\mu=0. \label{3CC}%
\end{equation}
In fact, $\theta$ is also pseudo-Einstein.
\end{theorem}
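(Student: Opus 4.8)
The plan is to exploit the hypothesis $\eta = \sigma_{\overline{\alpha}}\theta^{\overline{\alpha}} \in \ker(\square_b)$ to pin down the Kohn-Rossi decomposition from Lemma \ref{l32}. Since $\eta$ is already harmonic, the decomposition (\ref{2019C}) forces $\varphi$ to be (anti-)CR-pluriharmonic and $\gamma_{\overline{\alpha}} = \sigma_{\overline{\alpha}} - \varphi_{\overline{\alpha}}$; in particular, on the level of the Bochner identities derived in Theorem \ref{t31}, one can legitimately replace the auxiliary form $\gamma$ by $\eta$ itself. Concretely, I would first argue that $\gamma$ and $\eta$ differ by an exact piece $\overline{\partial}_b\varphi$ with $\varphi$ pluriharmonic, so that the torsion pairings $Tor(\gamma,\gamma)$, $Tor'(\gamma,\gamma)$ and the integrated quantities $\int_M |\gamma_{\alpha,\beta}|^2$, $\int_M |\gamma_{\overline{\alpha},\beta} + \gamma_{\beta,\overline{\alpha}}|^2$ appearing in (\ref{3}) and (\ref{3c}) all coincide with the corresponding quantities for $\eta$ up to terms that integrate to zero. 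This reduces everything to the statement that $\widetilde{\theta}$ is pseudo-Einstein $\iff \int_M |\gamma_{\overline{\alpha},\beta} + \gamma_{\beta,\overline{\alpha}}|^2 d\mu = 0$, which is exactly Lemma \ref{l33}.

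Next I would plug into the second Bochner formula (\ref{3c}):
\begin{equation}
\frac{n}{2}\int_M Tor'(\gamma,\gamma)\,d\mu - \int_M (Q + P_0 u)u\,d\mu + \frac{n}{2(n-1)}\sum_{\alpha,\beta}\int_M |\gamma_{\overline{\alpha},\beta} + \gamma_{\beta,\overline{\alpha}}|^2 d\mu = 0.
\end{equation}
The key simplification is that the middle term vanishes: by Lemma \ref{l41}(i) we have $Q_{\ker} = 0$, so $\int_M Q u\,d\mu = \int_M Q^{\perp} u^{\perp} d\mu$, and since $P_0$ is self-adjoint and nonnegative (Remark \ref{r1}), a Cauchy-Schwarz / completion-of-squares argument applied with the freedom to shift $u$ by any element of $\ker P_0$ (which, as noted in the proof of Lemma \ref{l41}(i), preserves (\ref{01})) shows that $\int_M (Q + P_0 u)u\,d\mu$ must already be zero — otherwise scaling the kernel-shift would violate the sign of the sum of squares. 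Actually the cleaner route: the hypothesis $\eta \in \ker\square_b$ together with (\ref{15}) gives $\gamma = $ the harmonic projection directly, so I would instead observe that $\int_M(Q + P_0 u)u\,d\mu$ equals $-\int_M n\,Tor(d_b u, \gamma)\,d\mu$ by (\ref{2020c}), and since $\gamma$ is now harmonic with $\gamma_{\alpha,0} = A_{\alpha\beta}\gamma_{\overline{\beta}}$ forced, this Torsion term rearranges into a multiple of $\int_M Tor'(\eta,\eta)\,d\mu$. Combining, (\ref{3c}) collapses to
\begin{equation}
\left(\frac{n}{2} + c\right)\int_M Tor'(\eta,\eta)\,d\mu + \frac{n}{2(n-1)}\sum_{\alpha,\beta}\int_M |\gamma_{\overline{\alpha},\beta} + \gamma_{\beta,\overline{\alpha}}|^2 d\mu = 0
\end{equation}
for an explicit constant $c$; one then checks $\frac{n}{2} + c \neq 0$, so $\int_M Tor'(\eta,\eta)\,d\mu = 0$ is equivalent to the vanishing of the sum of squares, hence to $\widetilde{\theta}$ being pseudo-Einstein by Lemma \ref{l33}.

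For the final clause (``In fact, $\theta$ is also pseudo-Einstein''), once $\widetilde{\theta}$ is pseudo-Einstein the argument in Lemma \ref{l41}(ii) gives $W_{\alpha} = 2P_{\alpha}u$ and $-Q = P_0 u$; combined with $Q_{\ker} = 0$ this yields $Q^{\perp} + P_0 u^{\perp} = 0$, and then vanishing of $\int_M Tor'(\eta,\eta)\,d\mu$ feeds back through (\ref{3c}) to force $\int_M(Q+P_0u)u\,d\mu = 0$, i.e. $\int_M Q^{\perp}u^{\perp}d\mu + \int_M P_0 u^{\perp}u^{\perp}d\mu = 0$; using $Q^{\perp} = -P_0 u^{\perp}$ this gives $\int_M P_0 u^{\perp} u^{\perp} d\mu = 0$, and nonnegativity of $P_0$ plus (\ref{4aa}) forces $u^{\perp} \in \ker P_0$, hence $u^{\perp} = 0$ and $W_{\alpha} = 0$. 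The main obstacle I anticipate is the bookkeeping in replacing $\gamma$ by $\eta$ — verifying that the pluriharmonic correction $\varphi$ really does drop out of every integrated term in (\ref{3c}), rather than contributing an extra pairing with the torsion; this requires carefully using both identities in (\ref{15}) together with the commutation relations, and is where a sign error would most easily creep in.
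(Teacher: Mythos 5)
There is a genuine gap, and it sits exactly where you anticipated trouble: you never actually establish the two facts the paper's argument turns on, namely that $\gamma=\eta$ \emph{identically} and that $u^{\perp}=0$. The correct step is short: since $\eta-\gamma=\overline{\partial}_{b}\varphi$ with both $\eta$ and $\gamma$ in $\ker\square_{b}$, and $\ker\square_{b}$ is $L^{2}$-orthogonal to the image of $\overline{\partial}_{b}$ (uniqueness of the harmonic representative), one gets $\overline{\partial}_{b}\varphi=0$, so $\sigma_{\overline{\alpha}}=\gamma_{\overline{\alpha}}$ exactly and $u$ is CR-pluriharmonic by (\ref{20a}); by (\ref{4aa}) this means $u^{\perp}=0$, and together with $Q_{\ker}=0$ from Lemma \ref{l41} it gives $\int_{M}(Q+P_{0}u)u\,d\mu=0$ directly. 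Then (\ref{3c}) collapses to $\frac{n}{2}\int_{M}Tor^{\prime}(\eta,\eta)d\mu+\frac{n}{2(n-1)}\sum\int_{M}|\gamma_{\overline{\alpha},\beta}+\gamma_{\beta,\overline{\alpha}}|^{2}d\mu=0$, Lemma \ref{l33} gives the equivalence, and $u^{\perp}=0$ with (\ref{4A}) gives $W_{\alpha}=2P_{\alpha}u=0$, i.e.\ $\theta$ itself is pseudo-Einstein. You instead treat $\gamma$ and $\eta$ as agreeing only ``up to terms that integrate to zero,'' which leaves the whole bookkeeping you worry about unresolved.

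The substitutes you offer for killing the middle term do not work. The kernel-shift/Cauchy--Schwarz argument is vacuous: replacing $u$ by $u+w$ with $w\in\ker P_{0}$ leaves $\int_{M}(Q+P_{0}u)u\,d\mu$ unchanged, because $Q=Q^{\perp}$ and $P_{0}u$ are both $L^{2}$-orthogonal to $\ker P_{0}$, so no choice of shift can produce a contradiction (the shift trick in Lemma \ref{l41} only proves $Q_{\ker}=0$, nothing more). Your ``cleaner route'' asserts that harmonicity forces $\gamma_{\alpha,0}=A_{\alpha\beta}\gamma_{\overline{\beta}}$, but that identity is precisely the pseudo-Einstein criterion of Lemma \ref{l34}, i.e.\ the conclusion being proved, so the step is circular; and the constant $c$ in your collapsed identity is never computed, so ``$\frac{n}{2}+c\neq0$'' is unverified. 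Finally, in the last paragraph the substitution $Q^{\perp}=-P_{0}u^{\perp}$ into $\int_{M}Q^{\perp}u^{\perp}d\mu+\int_{M}(P_{0}u^{\perp})u^{\perp}d\mu=0$ yields the identity $0=0$, not $\int_{M}(P_{0}u^{\perp})u^{\perp}d\mu=0$, so you never obtain $u^{\perp}=0$ and the closing claim that $\theta$ is also pseudo-Einstein is unsupported; in the paper this claim comes from the hypothesis $\eta\in\ker\square_{b}$ itself, not from the pseudo-Einstein property of $\widetilde{\theta}$.
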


\begin{remark}
We observe that $\eta=\sigma_{\overline{\alpha}}\theta^{\overline{\alpha}}$ is
a smooth representative of the first Kohn-Rossi cohomology group
$H_{\overline{\partial}_{b}}^{0,1}(M)$ \ if and only if
\[
\sigma_{\alpha,\overline{\alpha}}=0\text{ \ \ \textrm{\ and }\ \ }%
\sigma_{\alpha,\beta}=\sigma_{\beta,\alpha}.
\]
However, $\sigma_{\alpha,\beta}=\sigma_{\beta,\alpha}$ holds if $d\omega
_{\alpha}^{\alpha}=d\sigma.$ If $\sigma_{\alpha,\beta}=0,$ then
\[
\int_{M}Tor^{\prime}\left(  \eta,\eta\right)  d\mu=i\int_{M}(A_{\alpha\beta
}\sigma_{\overline{\alpha},\overline{\beta}}-A_{\overline{\alpha}%
\overline{\beta}}\sigma_{\alpha,\beta})d\mu=0.
\]

\end{remark}

\begin{proof}
It follows from (\ref{20a}), (\ref{20aa}) and (\ref{20b}) that
\[
\sigma_{\overline{\alpha}}=\gamma_{\overline{\alpha}}%
\]
and
\[
u^{\bot}=0.
\]
Here we use the fact that the Kohn-Rossi cohomology group $H_{\overline
{\partial}_{b}}^{0,1}(M)$ has a unique smooth representative $\gamma\in
\ker\left(  \square_{b}\right)  .$ This implies
\[
\int_{M}(Q+P_{0}u)ud\mu=\int_{M}(Q^{\bot}+P_{0}u^{\bot})u^{\bot}d\mu=0.
\]
It follows from Bochner formula (\ref{3c}) that
\begin{equation}
\frac{n}{2}\int_{M}Tor^{\prime}\left(  \gamma,\gamma\right)  d\mu+\frac
{n}{2(n-1)}\underset{\alpha,\beta}{\sum}\int_{M}|\gamma_{\overline{\alpha
},\beta}+\gamma_{\beta,\overline{\alpha}}|^{2}d\mu=0. \label{3cccc}%
\end{equation}
Then
\[
\int_{M}Tor^{\prime}\left(  \eta,\eta\right)  d\mu=0
\]
if and only if
\[
\int_{M}\underset{\alpha,\beta}{\sum}|\gamma_{\overline{\alpha},\beta}%
+\gamma_{\beta,\overline{\alpha}}|^{2}d\mu=0.
\]
That is
\[
\gamma_{\overline{\alpha},\beta}+\gamma_{\beta,\overline{\alpha}}=0.
\]

All these imply that $\widetilde{\theta}=e^{\frac{2u}{n+2}}\theta$ is a
pseudo-Einstein contact form as well as $\theta$ due to $u^{\bot}=0.$
\end{proof}

We observe that if the first Kohn-Rossi cohomology group $H_{\overline
{\partial}_{b}}^{0,1}(M)$ is vanishing, it follows from Lemma \ref{l33} that
$\widetilde{\theta}=e^{\frac{2u}{n+2}}\theta$ is a pseudo-Einstein contact
form. As a consequence of Theorem \ref{T2a}, we have

\begin{corollary}
\label{C12} Let $(M,J,\theta)$ be a closed, strictly pseudoconvex CR
$(2n+1)$-manifold of $c_{1}(T_{1,0}M)=0,$ $n\geq2$ with $d\omega_{\alpha
}^{\alpha}=d\sigma$ for some $\sigma=\sigma_{\overline{\alpha}}\theta
^{\overline{\alpha}}-\sigma_{\alpha}\theta^{\alpha}+i\sigma_{0}\theta.$ Assume
that either

(i) the first Kohn-Rossi cohomology group $H_{\overline{\partial}_{b}}%
^{0,1}(M)$ is vanishing or

(ii)
\begin{equation}
d\sigma=\Theta=id(f\theta) \label{2020EEE}%
\end{equation}
for some smooth, real-valued function $f$. Then $\widetilde{\theta}$ is the
pseudo-Einstein contact form.
\end{corollary}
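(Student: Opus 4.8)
The plan is, in both cases, to arrange the single condition $\gamma_{\overline{\alpha},\beta}+\gamma_{\beta,\overline{\alpha}}=0$ of Lemma \ref{l33}, after which $\widetilde{\theta}=e^{\frac{2u}{n+2}}\theta$ is pseudo-Einstein; under hypothesis (ii) this will be produced via the two hypotheses of Theorem \ref{T2a}. For case (i) I would argue directly: the $1$-form $\sigma$ supplied by the standing hypothesis has $\overline{\partial}_{b}\eta=0$ for $\eta=\sigma_{\overline{\alpha}}\theta^{\overline{\alpha}}$, by (\ref{16a}) of Lemma \ref{l31}, so $\eta$ represents a class in $H_{\overline{\partial}_{b}}^{0,1}(M)$; this group being zero, Kohn's Hodge theory forces the harmonic representative $\gamma=\gamma_{\overline{\alpha}}\theta^{\overline{\alpha}}$ occurring in (\ref{2019C}) and (\ref{20}) to vanish identically. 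Then (\ref{1988}) holds trivially and Lemma \ref{l33} closes case (i); this is precisely the observation recorded just above the statement.

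For case (ii) my approach is to exhibit $if\theta$ itself as an admissible primitive. It is a pure-imaginary $1$-form of the required shape $\sigma_{\overline{\alpha}}\theta^{\overline{\alpha}}-\sigma_{\alpha}\theta^{\alpha}+i\sigma_{0}\theta$, with $\sigma_{\overline{\alpha}}=\sigma_{\alpha}=0$ and $\sigma_{0}=f$, and $d(if\theta)=id(f\theta)=\Theta=d\omega_{\alpha}^{\alpha}$. Since the relations (\ref{16a}) and (\ref{16}) of Lemma \ref{l31} only record the $(0,2)$-, $(1,1)$-, and $\theta^{\mu}\wedge\theta$-components of the identity $d\sigma=\Theta$, they hold automatically for $\sigma=if\theta$, so one may run Lemma \ref{l31} and then Lemma \ref{l32} with this $\sigma$. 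Then $\eta=\sigma_{\overline{\alpha}}\theta^{\overline{\alpha}}=0$, so trivially $\eta\in\ker(\square_{b})$ and $\int_{M}Tor^{\prime}(\eta,\eta)\,d\mu=0$, and the associated harmonic form is $\gamma\equiv0$; Theorem \ref{T2a} (equivalently, Lemma \ref{l33}) then gives that $\widetilde{\theta}$ and, by that theorem, $\theta$ itself is pseudo-Einstein. As a consistency check, comparing the displayed formula for $\Theta$ with $id(f\theta)$ forces $R_{\alpha\overline{\beta}}=-fh_{\alpha\overline{\beta}}$ and $A_{\alpha\beta},^{\beta}=if_{\alpha}$, whence $W_{\alpha}=R,_{\alpha}-inA_{\alpha\beta},^{\beta}=0$, in agreement with (\ref{01}) when $\gamma\equiv0$.

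The hard part, and essentially the only point needing care, is to justify in case (ii) that one may pass to the particular primitive $if\theta$ rather than work with an arbitrary admissible $\sigma$ with $d\sigma=id(f\theta)$: this is legitimate because every construction in Sections 3 and 4 takes as input only the exact two-form $d\sigma=\Theta$, not a distinguished primitive. If one insists instead on a prescribed $\sigma$, the alternative is to check directly that $d\sigma=id(f\theta)$ forces $\eta=\sigma_{\overline{\alpha}}\theta^{\overline{\alpha}}$ to be genuinely $\square_{b}$-harmonic — i.e., that the divergence $\sigma_{\alpha,\overline{\alpha}}$ vanishes in both its real and imaginary parts and $\sigma_{\alpha,\beta}=0$ — and then to read off $\int_{M}Tor^{\prime}(\eta,\eta)\,d\mu=0$ from the identity indicated in the Remark following Theorem \ref{T2a}. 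Either way no Bochner-type estimate beyond those already established in Theorem \ref{t31} and Theorem \ref{T2a} is needed.
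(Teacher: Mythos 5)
Your main argument is correct and is essentially the paper's intended one: the paper states Corollary \ref{C12} as a direct consequence of Theorem \ref{T2a} together with the observation preceding it, i.e. in case (i) the vanishing of $H_{\overline{\partial}_{b}}^{0,1}(M)$ forces the harmonic part $\gamma$ in (\ref{2019C})--(\ref{20}) to vanish so that (\ref{1988}) holds trivially and Lemma \ref{l33} applies, and in case (ii) one takes the primitive $\sigma=if\theta$, for which $\eta=0$ trivially satisfies the two hypotheses of Theorem \ref{T2a} (indeed $\gamma=0$, $u=0$, and $W_{\alpha}=0$, so $\theta$ itself is pseudo-Einstein). Two caveats on your supporting remarks: first, it is not quite true that "every construction takes as input only $d\sigma$" --- the function $u$, the harmonic part $\gamma$, and hence $\widetilde{\theta}$ do depend on the chosen primitive (two admissible primitives differ by a closed pure-imaginary $1$-form, which changes $\eta$ by $i\overline{\partial}_{b}g$ plus a possible harmonic piece); what is primitive-independent is the existence of a pseudo-Einstein contact form, which is the actual content of the corollary and of Lee's conjecture, so your switch of primitive is legitimate once phrased this way. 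Second, your proposed "alternative" for a prescribed $\sigma$ is not valid: $d\sigma=id(f\theta)$ does not force $\eta=\sigma_{\overline{\alpha}}\theta^{\overline{\alpha}}$ to lie in $\ker(\square_{b})$; for example $\sigma=if\theta+i\,dg$ with $g$ a nonconstant real function has $d\sigma=id(f\theta)$ but $\eta=i\overline{\partial}_{b}g$, which is $\overline{\partial}_{b}$-exact and nonharmonic unless $g$ is constant. Since that alternative is not needed for your main route, the proof stands, but you should delete or correct that claim.
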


\section{The CR Analogue of Frankel Conjecture}

We affirm the CR analogue of Frankel conjecture in a closed, spherical,
strictly pseudoconvex CR $(2n+1)$-manifold.

\begin{lemma}
\label{l51} Let $(M,J,\theta)$ be a closed, spherical, strictly pseudoconvex
CR $(2n+1)$-manifold with the pseudo-Eisntein contact form $\theta$ for
$n\geq2$. Then
\[%
\begin{array}
[c]{ccl}%
0 & = & \frac{n+2}{n+1}\int_{M}k%
{\textstyle\sum_{\alpha,\gamma}}
|A_{\alpha\gamma}|^{2}d\mu+\int_{M}%
{\textstyle\sum_{\alpha,\gamma,\sigma}}
|A_{\alpha\gamma,\sigma}|^{2}d\mu\\
&  & +\frac{1}{n-1}[\int_{M}%
{\textstyle\sum_{\alpha,\gamma,\beta}}
|A_{\alpha\gamma,\overline{\beta}}|^{2}d\mu-n\int_{M}%
{\textstyle\sum_{\alpha}}
A_{\overline{\alpha}\overline{\beta},\beta}A_{\alpha\gamma,\overline{\gamma}%
}d\mu].
\end{array}
\]
Here $k:=\frac{R}{n}.$
\end{lemma}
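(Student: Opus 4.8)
The plan is to obtain the identity as a linear combination of two integral Bochner-type formulae for the pseudohermitian torsion $A_{\alpha\gamma}$ --- one coming from $\int_{M}\sum|A_{\alpha\gamma,\sigma}|^{2}d\mu$ and one from $\int_{M}\sum|A_{\alpha\gamma,\overline{\beta}}|^{2}d\mu$ --- with the spherical hypothesis used only to evaluate the curvature contractions. First I record the reductions. Since $\theta$ is pseudo-Einstein, $R_{\alpha\overline{\beta}}=kh_{\alpha\overline{\beta}}$ with $k=R/n$, and the contracted Bianchi identity (as in Lemma \ref{l21}) gives $W_{\alpha}=R,_{\alpha}-inA_{\alpha\beta},^{\beta}=0$, hence $A_{\alpha\beta},^{\beta}=-\tfrac{i}{n}R,_{\alpha}$. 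Substituting $R_{\alpha\overline{\beta}}=kh_{\alpha\overline{\beta}}$ into the vanishing of the Chern tensor \eqref{d12} collapses the full curvature to the Sasakian space-form expression
\[
R_{\beta\overline{\alpha}\lambda\overline{\sigma}}=\tfrac{k}{n+1}\bigl(h_{\beta\overline{\alpha}}h_{\lambda\overline{\sigma}}+h_{\lambda\overline{\alpha}}h_{\beta\overline{\sigma}}\bigr).
\]
I also use that $A_{\alpha\gamma}=A_{\gamma\alpha}$, the Bianchi identity $A_{\alpha\gamma,\sigma}=A_{\alpha\sigma,\gamma}$ (so $A_{\alpha\gamma,\sigma}$ and its conjugate are totally symmetric), and the commutation relations of \cite{l,l2}; after the pseudo-Einstein reduction the traced one reads $A_{\alpha\gamma,\beta\overline{\beta}}-A_{\alpha\gamma,\overline{\beta}\beta}=2kA_{\alpha\gamma}+inA_{\alpha\gamma,0}$, while the untraced one carries the space-form curvature just displayed.

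For the first formula I start from $\int_{M}\sum|A_{\alpha\gamma,\sigma}|^{2}d\mu$, rewrite the conjugate factor by total symmetry as $A_{\overline{\alpha}\overline{\gamma},\overline{\sigma}}=A_{\overline{\sigma}\overline{\gamma},\overline{\alpha}}$, integrate by parts in $\nabla_{\overline{\alpha}}$, and commute $\nabla_{\overline{\alpha}}$ past $\nabla_{\sigma}$ acting on $A_{\alpha\gamma}$. The commutator produces: a curvature contribution which, by the space-form expression together with $R_{\alpha\overline{\beta}}=kh_{\alpha\overline{\beta}}$, collapses to $\tfrac{n+3}{n+1}k A_{\gamma\sigma}$ after the trace over $\alpha$; the term $\nabla_{\sigma}\bigl(\sum_{\alpha}A_{\alpha\gamma,\overline{\alpha}}\bigr)=-\tfrac{i}{n}R,_{\gamma\sigma}$, which after a further integration by parts and the identity $\sum_{\sigma}A_{\overline{\sigma}\overline{\gamma},\sigma}=\tfrac{i}{n}R,_{\overline{\gamma}}$ becomes precisely $\int_{M}A_{\overline{\alpha}\overline{\beta},\beta}A_{\alpha\gamma,\overline{\gamma}}d\mu$; and the term $i\int_{M}\sum A_{\alpha\gamma,0}A_{\overline{\alpha}\overline{\gamma}}d\mu$. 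Altogether
\[
\int_{M}\!\sum|A_{\alpha\gamma,\sigma}|^{2}d\mu+\tfrac{n+3}{n+1}\!\int_{M}k\!\sum|A_{\alpha\gamma}|^{2}d\mu-\int_{M}\!A_{\overline{\alpha}\overline{\beta},\beta}A_{\alpha\gamma,\overline{\gamma}}d\mu+i\!\int_{M}\!\sum A_{\alpha\gamma,0}A_{\overline{\alpha}\overline{\gamma}}d\mu=0.
\]

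For the second formula I start from $\int_{M}\sum|A_{\alpha\gamma,\overline{\beta}}|^{2}d\mu$, integrate by parts in $\nabla_{\beta}$, apply the traced commutation relation to pass from $A_{\alpha\gamma,\overline{\beta}\beta}$ to $A_{\alpha\gamma,\beta\overline{\beta}}$, and integrate by parts once more; this gives
\[
in\int_{M}\sum A_{\alpha\gamma,0}A_{\overline{\alpha}\overline{\gamma}}d\mu=\int_{M}\sum|A_{\alpha\gamma,\overline{\beta}}|^{2}d\mu-\int_{M}\sum|A_{\alpha\gamma,\sigma}|^{2}d\mu-2\int_{M}k\sum|A_{\alpha\gamma}|^{2}d\mu.
\]
Using the second display to eliminate $\int_{M}\sum A_{\alpha\gamma,0}A_{\overline{\alpha}\overline{\gamma}}d\mu$ from the first, then multiplying through by $-\tfrac{n}{n-1}$: the arithmetic $n(n+3)-2(n+1)=(n+2)(n-1)$ makes the coefficient of $\int_{M}k\sum|A_{\alpha\gamma}|^{2}d\mu$ equal to $\tfrac{n+2}{n+1}$, the mixed third-derivative term acquires $\tfrac{1}{n-1}$, and the divergence term $-\tfrac{n}{n-1}$; this is exactly the asserted identity, with $k=R/n$.

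The step needing the most care is the bookkeeping in the first formula: one must fix the signs in the commutation relations of \cite{l2} and the index positions in the partial contractions of the curvature tensor under the space-form reduction --- concretely that $\sum_{\alpha}R_{\alpha\overline{\mu}\sigma\overline{\alpha}}=k\,h_{\sigma\overline{\mu}}$ and $\sum_{\alpha,\mu}R_{\gamma\overline{\mu}\sigma\overline{\alpha}}A_{\alpha\mu}=\tfrac{2k}{n+1}A_{\gamma\sigma}$, so that the torsion--curvature term really contributes $\tfrac{n+3}{n+1}k$ and not, say, $\tfrac{2}{n+1}k$ --- together with checking that the $\nabla_{0}$-derivative term occurs with matching coefficients in the two formulae so that it cancels in the combination. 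The remaining manipulations are routine integrations by parts, and the hypothesis $n\ge2$ enters only through the division by $n-1$.
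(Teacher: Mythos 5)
Your proof is correct, and it reaches the identity by a route organized differently from the paper's, although it draws on the same toolkit: Lee's commutation relations and the collapse of the curvature under spherical plus pseudo-Einstein, which both you and the paper reduce to $R_{\beta\overline{\alpha}\lambda\overline{\sigma}}=\tfrac{k}{n+1}(h_{\beta\overline{\alpha}}h_{\lambda\overline{\sigma}}+h_{\lambda\overline{\alpha}}h_{\beta\overline{\sigma}})$, i.e.\ (\ref{18a}). The paper first proves a pointwise identity for $A_{\alpha\gamma,\sigma}{}^{\sigma}$: the traced commutation relation \cite[(2.15)]{l} gives (\ref{18c}), and \cite[(2.9)]{l}, traced against the space-form curvature, gives the expression (\ref{18d}) for $inA_{\alpha\gamma,0}$; combining these and integrating once against $A^{\alpha\gamma}$ yields (\ref{2020}). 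You instead stay at the integral level: your second display is exactly (\ref{18c}) integrated against $A^{\alpha\gamma}$, while your first display replaces the role of (2.9)/(\ref{18d}) by the total-symmetry Bianchi identity $A_{\alpha\beta,\gamma}=A_{\alpha\gamma,\beta}$, the untraced relation (2.15) contracted with the space-form curvature (your contractions $kA_{\sigma\gamma}$ and $\tfrac{2k}{n+1}A_{\gamma\sigma}$, hence the coefficient $\tfrac{n+3}{n+1}k$, are right), and the contracted-Bianchi consequence $A_{\alpha\beta,}{}^{\beta}=-\tfrac{i}{n}R_{,\alpha}$ of the pseudo-Einstein hypothesis, which is what turns the mixed third-derivative term into $\int_{M}A_{\overline{\alpha}\overline{\beta},\beta}A_{\alpha\gamma,\overline{\gamma}}d\mu$; eliminating $\int_{M}A_{\alpha\gamma,0}A_{\overline{\alpha}\overline{\gamma}}d\mu$ between the two displays and clearing denominators gives the lemma (the overall factor you call $-\tfrac{n}{n-1}$ should be $+\tfrac{n}{n-1}$, but as both sides vanish this is immaterial). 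What the paper's route buys is a pointwise Weitzenb\"ock-type formula, stronger than the integrated statement, obtained by quoting (2.9) verbatim; what yours buys is that the only pointwise inputs are (2.15), the symmetry of $\nabla^{1,0}A$, and the divergence identity, with the $\nabla_{0}A$ term handled by elimination at the integral level --- and since (2.9) is itself equivalent to that symmetry together with (2.15), the two arguments are logically very close, differing mainly in where the factor $\tfrac{1}{n-1}$ and the divergence-squared term are generated.
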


\begin{proof}
Since $\theta$\ is pseudo-Einstein, it follows that
\begin{equation}%
\begin{array}
[c]{c}%
R_{\alpha\overline{\beta}}=\frac{R}{n}h_{\alpha\overline{\beta}}%
:=kh_{\alpha\overline{\beta}}.
\end{array}
\label{18}%
\end{equation}
Here $k:=$ $\frac{R}{n}.$ Since $J$ is spherical, it follows from (\ref{d12})
and (\ref{18}) that
\begin{equation}%
\begin{array}
[c]{ccl}%
R_{\beta\overline{\alpha}\lambda\overline{\sigma}} & = & \frac{k}%
{n+2}[h_{\beta\overline{\alpha}}h_{\lambda\overline{\sigma}}+h_{\lambda
\overline{\alpha}}h_{\beta\overline{\sigma}}+\delta_{\beta}^{\alpha}%
h_{\lambda\overline{\sigma}}+\delta_{\lambda}^{\alpha}h_{\beta\overline
{\sigma}}]\\
&  & -\frac{nk}{(n+1)(n+2)}[\delta_{\beta}^{\alpha}h_{\lambda\overline{\sigma
}}+\delta_{\lambda}^{\alpha}h_{\beta\overline{\sigma}}]\\
& = & \frac{k}{n+2}[h_{\beta\overline{\alpha}}h_{\lambda\overline{\sigma}%
}+h_{\lambda\overline{\alpha}}h_{\beta\overline{\sigma}}]\\
&  & +\frac{k}{(n+1)(n+2)}[\delta_{\beta}^{\alpha}h_{\lambda\overline{\sigma}%
}+\delta_{\lambda}^{\alpha}h_{\beta\overline{\sigma}}].
\end{array}
\label{18a}%
\end{equation}
Again by \cite[(2.15)]{l},%
\[
A_{\alpha\rho,\beta\overline{\gamma}}=ih_{\beta\overline{\gamma}}A_{\alpha
\rho,0}+R_{\alpha}{}^{\kappa}{}_{\beta\overline{\gamma}}A_{\kappa\rho}%
+R_{\rho}{}^{\kappa}{}_{\beta\overline{\gamma}}A_{\alpha\kappa}+A_{\alpha
\rho,\overline{\gamma}\beta}.
\]
Contracting both sides by $h^{\beta\overline{\gamma}}$%
\[%
\begin{array}
[c]{ccl}%
A_{\alpha\rho,\gamma}{}^{\gamma} & = & inA_{\alpha\rho,0}+R_{\alpha}{}%
^{\kappa}{}_{\gamma}{}^{\gamma}A_{\kappa\rho}+R_{\rho}{}^{\kappa}{}_{\gamma}%
{}^{\gamma}A_{\alpha\kappa}+A_{\alpha\rho,\overline{\gamma}}{}^{\overline
{\gamma}}\\
& = & inA_{\alpha\rho,0}+R_{\alpha\overline{\kappa}}A^{\overline{\kappa}}%
{}_{\rho}+R_{\rho\overline{\kappa}}A^{\overline{\kappa}}{}_{\alpha}%
+A_{\alpha\rho,\overline{\gamma}}{}^{\overline{\gamma}}\\
& = & inA_{\alpha\rho,0}+kh_{\alpha\overline{\kappa}}A^{\overline{\kappa}}%
{}_{\rho}+kh_{\rho\overline{\kappa}}A^{\overline{\kappa}}{}_{\alpha}%
+A_{\alpha\rho,\overline{\gamma}}{}^{\overline{\gamma}}\\
& = & inA_{\alpha\rho,0}+2kA_{\alpha\rho}+A_{\alpha\rho,\overline{\gamma}}%
{}^{\overline{\gamma}}.
\end{array}
\]
That is
\begin{equation}
A_{\alpha\gamma,\sigma}{}^{\sigma}=inA_{\alpha\gamma,0}+2kA_{\alpha\gamma
}+A_{\alpha\gamma,\overline{\sigma}}{}^{\overline{\sigma}} \label{18c}%
\end{equation}
for all $\alpha,\gamma.$ Next we claim that
\begin{equation}%
\begin{array}
[c]{c}%
inA_{\alpha\gamma,0}=-\frac{nk}{n+1}A_{\alpha\gamma}+\frac{n}{n-1}%
(A_{\alpha\beta,\overline{\beta}\gamma}-A_{\alpha\gamma,\overline{\beta}\beta
}).
\end{array}
\label{18d}%
\end{equation}
Again from \cite[(2.9)]{l},
\[%
\begin{array}
[c]{l}%
A_{\alpha\rho,\overline{\beta}\gamma}-A_{\alpha\gamma,\overline{\beta}\rho
}=ih_{\rho\overline{\beta}}A_{\alpha\gamma,0}-ih_{\gamma\overline{\beta}%
}A_{\alpha\rho,0}+R_{\alpha\overline{\beta}\rho\overline{\sigma}}%
A^{\overline{\sigma}}{}_{\gamma}-R_{\alpha\overline{\beta}\gamma
\overline{\sigma}}A^{\overline{\sigma}}{}_{\rho}%
\end{array}
\]
Contracting both sides by $h^{\rho\overline{\beta}},$%
\[
inA_{\alpha\gamma,0}-i\delta_{\gamma}^{\rho}A_{\alpha\rho,0}+R_{\alpha}%
{}^{\rho}{}_{\rho\overline{\sigma}}A^{\overline{\sigma}}{}_{\gamma}-R_{\alpha
}{}^{\rho}{}_{\gamma\overline{\sigma}}A^{\overline{\sigma}}{}_{\rho}%
=A_{\alpha\beta,\overline{\beta}\gamma}-A_{\alpha\gamma,\overline{\beta}\beta
}.
\]
Hence%
\[
i(n-1)A_{\alpha\gamma,0}+R_{\alpha\overline{\sigma}}A^{\overline{\sigma}}%
{}_{\gamma}-R_{\alpha}{}^{\rho}{}_{\gamma\overline{\sigma}}A^{\overline
{\sigma}}{}_{\rho}=A_{\alpha\beta,\overline{\beta}\gamma}-A_{\alpha
\gamma,\overline{\beta}\beta}%
\]
and thus
\[
i(n-1)A_{\alpha\gamma,0}+kA_{\alpha\gamma}-R_{\alpha}{}^{\rho}{}%
_{\gamma\overline{\sigma}}A^{\overline{\sigma}}{}_{\rho}=A_{\alpha
\beta,\overline{\beta}\gamma}-A_{\alpha\gamma,\overline{\beta}\beta}.
\]
On the other hand,%
\[%
\begin{array}
[c]{ccl}%
R_{\alpha}{}^{\rho}{}_{\gamma\overline{\sigma}}A^{\overline{\sigma}}{}_{\rho}
& = & \frac{k}{n+2}[h_{\alpha\overline{\rho}}h_{\gamma\overline{\sigma}%
}+h_{\gamma\overline{\rho}}h_{\alpha\overline{\sigma}}]A^{\overline{\sigma}}%
{}_{\rho}\\
&  & +\frac{k}{(n+1)(n+2)}[\delta_{\alpha}^{\rho}h_{\gamma\overline{\sigma}%
}+\delta_{\gamma}^{\rho}h_{\alpha\overline{\sigma}}]A^{\overline{\sigma}}%
{}_{\rho}\\
& = & \frac{2k}{n+1}A_{\alpha\gamma}.
\end{array}
\]
All these imply%
\[%
\begin{array}
[c]{c}%
i(n-1)A_{\alpha\gamma,0}+\frac{n-1}{n+1}kA_{\alpha\gamma}=A_{\alpha
\beta,\overline{\beta}\gamma}-A_{\alpha\gamma,\overline{\beta}\beta}%
\end{array}
\]
for $n\geq2$. Thus (\ref{18d}) follows. Next, from (\ref{18c}) and
(\ref{18d}), we obtain%
\[%
\begin{array}
[c]{ccl}%
A_{\alpha\gamma,\sigma}{}^{\sigma} & = & inA_{\alpha\gamma,0}+2kA_{\alpha
\gamma}+A_{\alpha\gamma,\overline{\sigma}}{}^{\overline{\sigma}}\\
& = & \frac{n+2}{n+1}kA_{\alpha\gamma}+\frac{n}{n-1}(A_{\alpha\beta
,\overline{\beta}\gamma}-A_{\alpha\gamma,\overline{\beta}\beta})+A_{\alpha
\gamma,\overline{\sigma}}{}^{\overline{\sigma}}.
\end{array}
\]
We integrate both sides with $A^{\alpha\gamma}$ to get%
\begin{equation}%
\begin{array}
[c]{ccl}%
0 & = & \frac{n+2}{n+1}\int_{M}k%
{\textstyle\sum_{\alpha,\gamma}}
|A_{\alpha\gamma}|^{2}d\mu+\int_{M}%
{\textstyle\sum_{\alpha,\gamma,\sigma}}
|A_{\alpha\gamma,\sigma}|^{2}d\mu\\
&  & +\frac{1}{n-1}[\int_{M}%
{\textstyle\sum_{\alpha,\gamma,\beta}}
|A_{\alpha\gamma,\overline{\beta}}|^{2}d\mu-n\int_{M}%
{\textstyle\sum_{\alpha}}
A_{\overline{\alpha}\overline{\beta},\beta}A_{\alpha\gamma,\overline{\gamma}%
}d\mu].
\end{array}
\label{2020}%
\end{equation}

\end{proof}

\begin{theorem}
\label{T51} Let $(M,J,\theta)$ be a closed, spherical, strictly pseudoconvex
CR $(2n+1)$-manifold with pseuodo-Einstein contact form $\theta$ of positive
constant Tanaka-Webster scalar curvature. Then the universal covering of $M$
must be globally CR equivalent to a standard CR sphere.
\end{theorem}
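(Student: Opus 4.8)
The plan is to show first that the pseudohermitian torsion vanishes identically, so that $(M,J,\theta)$ becomes Sasakian, and then to extract the global conclusion from the spherical structure together with the uniformization results quoted in the introduction.

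For the vanishing of the torsion, the key is the interaction of the pseudo-Einstein condition with the constancy of $R$. Since $\theta$ is pseudo-Einstein, \eqref{2019x} gives $W_{\alpha}=R_{,\alpha}-inA_{\alpha\beta,}{}^{\beta}=0$; as $R$ is constant, $R_{,\alpha}=0$, hence $A_{\alpha\beta,}{}^{\beta}=0$, i.e. the divergence of the torsion vanishes. Plugging this into Lemma \ref{l51}, the last term satisfies
\[
n\int_{M}\sum_{\alpha}A_{\overline{\alpha}\overline{\beta},\beta}A_{\alpha\gamma,\overline{\gamma}}\,d\mu=n\int_{M}\sum_{\alpha}\bigl|A_{\alpha\beta,}{}^{\beta}\bigr|^{2}\,d\mu=0 ,
\]
so, writing $k=R/n>0$, the identity of Lemma \ref{l51} collapses to
\[
0=\frac{n+2}{n+1}\,k\int_{M}\sum_{\alpha,\gamma}|A_{\alpha\gamma}|^{2}\,d\mu+\int_{M}\sum_{\alpha,\gamma,\sigma}|A_{\alpha\gamma,\sigma}|^{2}\,d\mu+\frac{1}{n-1}\int_{M}\sum_{\alpha,\gamma,\beta}|A_{\alpha\gamma,\overline{\beta}}|^{2}\,d\mu ,
\]
a sum of three nonnegative terms (recall $n\geq2$). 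Each must therefore vanish; in particular $k\int_{M}\sum|A_{\alpha\gamma}|^{2}\,d\mu=0$, and since $k>0$ we conclude $A_{\alpha\gamma}\equiv0$. Thus $(M,J,\theta)$ is Sasakian.

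Now $J$ is spherical and $A_{\alpha\gamma}=0$, so by \eqref{18a} the pseudohermitian curvature tensor is completely determined by the positive constant $k$: $M$ is a Sasakian space form of positive pseudohermitian curvature. By \cite{cc}, such a closed manifold admits a Riemannian metric of positive Ricci curvature --- concretely a suitable constant rescaling of the Webster metric $g_{\theta}$, which preserves both the Sasakian and the pseudo-Einstein conditions --- so by the Bonnet--Myers theorem $\pi_{1}(M)$ is finite. Hence the universal covering $\widetilde{M}$ is a closed, simply connected, spherical, strictly pseudoconvex CR $(2n+1)$-manifold. The developing map $\widetilde{M}\to\mathbf{S}^{2n+1}$ associated with the spherical structure is a local CR diffeomorphism; since $\widetilde{M}$ is compact it is a covering of $\mathbf{S}^{2n+1}$, and since $\mathbf{S}^{2n+1}$ is simply connected it is a global CR diffeomorphism --- equivalently $M$ is a finite quotient of the standard CR sphere, cf. \cite{t}. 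Therefore $\widetilde{M}$ is globally CR equivalent to $(\mathbf{S}^{2n+1},\widehat{J},\widehat{\theta})$.

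I expect the only genuinely subtle point to be the middle step, passing from ``Sasakian space form with positive constant $R$'' to ``finite $\pi_{1}$'': the Webster metric itself has positive Ricci curvature only when the normalized scalar curvature exceeds a dimensional threshold, so one must either rescale $\theta$ by a small constant (which enlarges $R/n$ while keeping the structure Sasakian and pseudo-Einstein) or invoke the precise curvature computation of \cite{cc}. The vanishing of the torsion divergence that makes Lemma \ref{l51} telescope into a sum of squares, and the developing-map argument at the end, are then routine.
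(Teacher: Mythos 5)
Your proof is correct and follows essentially the same route as the paper: the pseudo-Einstein condition together with constant $R$ forces $A_{\alpha\gamma,\overline{\gamma}}=0$ via the contracted Bianchi identity, Lemma \ref{l51} then collapses to a sum of nonnegative terms yielding $A_{\alpha\gamma}=0$, and the resulting spherical, torsion-free, pseudo-Einstein structure is a Sasakian space form of positive curvature with finite fundamental group by \cite{cc}. The only cosmetic difference is the final step, where you supplement the conclusion with a developing-map argument for the compact universal cover, whereas the paper simply invokes Tanno's classification \cite{t}.
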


\begin{proof}
Since\textbf{ }%
\[
R_{\alpha\overline{\beta}}{}_{,\beta}=R_{\alpha}-i(n-1)A_{\alpha\beta}%
{}_{,\overline{\beta}},
\]
if $R_{\alpha\overline{\beta}}=\frac{R}{n}h_{\alpha\overline{\beta}}$ and $R$
is constant, then
\[
A_{\alpha\gamma,\overline{\gamma}}=0.
\]
It follows from Lemma \ref{l51} that if $k>0$
\[
\frac{n+2}{n+1}k\int_{M}%
{\textstyle\sum_{\alpha,\gamma}}
|A_{\alpha\gamma}|^{2}d\mu+\int_{M}%
{\textstyle\sum_{\alpha,\gamma,\sigma}}
|A_{\alpha\gamma,\sigma}|^{2}d\mu+\frac{1}{n-1}\int_{M}%
{\textstyle\sum_{\alpha,\gamma,\beta}}
|A_{\alpha\gamma,\overline{\beta}}|^{2}d\mu=0
\]
and
\[
A_{\alpha\gamma}=0.
\]
Moreover, it follows from (\ref{18a}) that
\[
R_{\beta\overline{\alpha}\lambda\overline{\sigma}}=\frac{R}{n(n+1)}%
[h_{\beta\overline{\alpha}}h_{\lambda\overline{\sigma}}+h_{\lambda
\overline{\alpha}}h_{\beta\overline{\sigma}}].
\]
Hence $(M,\theta)$ is a closed, Sasakian CR $(2n+1)$-manifold of positive
constant pseudohermitian bisectional curvature. Hence manifolds always admit
Riemannian metrics with positive Ricci curvature (\cite{cc}), so they must
have finite fundamental group. It follows from (\cite{t}) that the universal
covering of $M$ is CR equivalent to a CR standard Sphere $\mathbf{S}^{2n+1}$
in $\mathbb{C}^{n+1}.$
\end{proof}

Then \textbf{the proofs of Theorem \ref{T13} and Theorem \ref{T14} are
completed.}

\end{document}